\patchcmd{\ttlh@hang}{\parindent\z@}{\parindent\z@\leavevmode}{}{}
\patchcmd{\ttlh@hang}{\noindent}{}{}{}
\theoremstyle{plain}
\newtheorem{theorem}{Theorem}[section]
\newtheorem{lemma}[theorem]{Lemma}
\newtheorem{proposition}[theorem]{Proposition}
\newtheorem{corollary}[theorem]{Corollary}
\providecommand{\customgenericname}{}
\newcommand{\newcustomtheorem}[2]{%
  \newenvironment{#1}[1]
  {%
   \renewcommand\customgenericname{#2}%
   \renewcommand\theinnercustomgeneric{##1}%
   \innercustomgeneric
  }
  {\endinnercustomgeneric}
}
\def\XXint#1#2#3{{\setbox0=\hbox{$#1{#2#3}{\int}$ }
\vcenter{\hbox{$#2#3$ }}\kern-.6\wd0}}
\theoremstyle{definition}
\newtheorem{definition}[theorem]{Definition}
\newtheorem{example}[theorem]{Example}
\newtheorem{notation}[theorem]{Notation}
\theoremstyle{remark}
\newtheorem*{remark}{Remark}
\numberwithin{equation}{section}
\DeclareMathOperator*{\supp}{supp}
\DeclareMathOperator*{\esssup}{ess\,sup}
\DeclareMathOperator*{\Co}{Co}
\newcommand{\Schwartz}{\mathcal{S}}
\newcommand{\vertiii}[1]{{\left\vert \kern-0.25ex
                            \left\vert \kern-0.25ex
                              \left\vert #1\right\vert\kern-0.25ex
                            \right\vert \kern-0.25ex
                          \right\vert}}
\newcommand{\eps}{\varepsilon}
\renewcommand{\emptyset}{\varnothing}
\newcommand{\CalQ}{\mathcal{Q}}
\newcommand{\CalP}{\mathcal{P}}
\newcommand{\CalO}{\mathcal{O}}
\newcommand{\GL}{\operatorname{GL}}
\newcommand{\Fourier}{\mathcal{F}}
\DeclareFontFamily{U}{mathx}{\hyphenchar\font45}
\DeclareFontShape{U}{mathx}{m}{n}{
      <5> <6> <7> <8> <9> <10>
      <10.95> <12> <14.4> <17.28> <20.74> <24.88>
      mathx10
      }{}
\DeclareSymbolFont{mathx}{U}{mathx}{m}{n}
\DeclareMathAccent{\widecheck}{0}{mathx}{"71}
\DeclareMathAccent{\wideparen}{0}{mathx}{"75}
\newcommand{\R}{\mathbb{R}}
\newcommand{\CC}{\mathbb{C}}
\newcommand{\N}{\mathbb{N}}
\newcommand{\Z}{\mathbb{Z}}
\title[Wavelet coorbit spaces associated to different dilations]{On wavelet coorbit spaces  associated to \\ different dilation groups}
\author{Hartmut F\"uhr}
\address{Lehrstuhl f\"ur Geometrie und Analysis, RWTH Aachen University, D-52056 Aachen,
Germany}
\email{fuehr@mathga.rwth-aachen.de}
\author{Jordy Timo van Velthoven }
\address{Faculty of Mathematics,
University of Vienna,
Oskar-Morgenstern-Platz 1,
A-1090 Vienna, Austria}
\email{jordy.timo.van.velthoven@univie.ac.at}
\author{Felix Voigtlaender}
\address{Mathematical Institute for Machine Learning and Data Science (MIDS),
Catholic University of Eichstätt–Ingolstadt (KU),
Auf der Schanz 49, 85049 Ingolstadt, Germany}
\email{felix.voigtlaender@ku.de}
\subjclass[2020]{42B35, 42C15, 42C40, 43A65, 51F30}
\keywords{Coarse geometry, Coarse equivalence, wavelet coorbit spaces, dilation groups.}
\begin{document}

\maketitle

\begin{center}
    \emph{Dedicated to Karlheinz Gröchenig on the occasion of his 65th birthday}
\end{center}

\begin{abstract}
This paper develops methods based on coarse geometry for the comparison of wavelet coorbit spaces
defined by different dilation groups, with emphasis on establishing a unified approach to both irreducible and reducible quasi-regular representations.
We show that the use of reducible representations is essential to include a variety of examples, 
such as anisotropic Besov spaces defined by general expansive matrices, in a common framework.

The obtained criteria yield, among others, a simple characterization of subgroups
of a dilation group yielding the same coorbit spaces. They also allow to clarify which anisotropic Besov spaces have an alternative description as coorbit spaces associated to irreducible quasi-regular representations. 
\end{abstract}

\section{Introduction}
\label{sec:Introduction}

The theory of coorbit spaces was initially developed by Feichtinger and Gröchenig
\cite{MR0942257, feichtinger1989banach2, feichtinger1989banach1, grochenig1991describing}
as a group-theoretic framework that allows to view various classical function spaces
in complex and harmonic analysis, such as Bergman, Bargmann-Fock and Besov-Triebel-Lizorkin spaces,
under a unified perspective. 
These initial papers have resulted in an extensive body of literature,
studying generalizations in various directions, and demonstrating the applicability
of the coorbit method in a large variety of settings, see, e.g., the papers
\cite{MR2776571,christensen2011coorbit,rauhut2007coorbit,christensen2019coorbits,dahlke2008generalized,%
velthoven2022coorbit,fornasier2005continuous,UllrichSchaeferGeneralizedCoorbitTheory}
and the references therein. 

A particularly rich source of examples within the original setting of coorbit spaces
that have been studied extensively in the past few years are function spaces on Euclidean space
that are invariant under translations and certain matrix dilations.
To be explicit, we let $H \leq \mathrm{GL}(d, \R)$ be a closed subgroup
and consider the semidirect product group $G = \R^d \rtimes H$.
The \emph{quasi-regular representation} $\pi$ of $G$ is the unitary representation on $L^2 (\R^d)$
defined by
\begin{align} \label{eq:quasiregular_intro}
  \pi(x, h) f(t)
  = |\det h|^{-\frac{1}{2}} f(h^{-1}(t - x)),
  \quad t \in \R^d. 
\end{align}
In case $\pi$ is irreducible and square-integrable, the associated coorbit spaces
are defined by imposing norm conditions on the wavelet transform
$W_{\psi} f = \langle f, \pi (\cdot) \psi \rangle$ of a function/distribution $f$
and an adequate fixed function $\psi$.
For example, given  $1 \leq p \leq 2$, the coorbit space $\Co(L^p(G))$ is the Banach space
of functions $f \in L^2 (\R^d)$ satisfying 
\begin{align} \label{eq:coorbit_intro}
  \| f \|_{\Co(L^p)}^p
  := \int_{G} | \langle f, \pi(x,h) \psi \rangle |^p \; d\mu_G (x,h)
  < \infty,
\end{align}
where $\mu_G$ denotes the left Haar measure on $G$.
For the similitude group $H = \R^+ \cdot \mathrm{SO}(d)$, the coorbit spaces $\Co(L^p(\R^d \rtimes H))$
correspond to classical (homogeneous) Besov spaces \cite{peetre1976new, frazier1985decomposition}
on $\R^d$ (see, e.g., \cite{groechenig1988unconditional, grochenig1991describing, MR0942257}), 
and the general coorbit theory \cite{MR0942257,feichtinger1989banach2,feichtinger1989banach1,grochenig1991describing}
revealed that the classical atomic decompositions of such spaces can be obtained
as consequences of the action of the group on these spaces.
Just like the classical Besov space $\dot{\mathrm{B}}^{0}_{1,1} (\R^d)$ is (in a certain sense)
the minimal Banach space being invariant under translations and dilations (cf.\ \cite{frazier1991littlewood}),
the coorbit space $\Co(L^1(G))$ can be shown to be the minimal Banach space
that is invariant under the action of the quasi-regular representation \eqref{eq:quasiregular_intro}.
As such, for other adequate dilation groups $H$, the spaces $\Co(L^p(\R^d \rtimes H))$
can be shown to yield new classes of function spaces, which have been studied intensively in,
e.g.,  \cite{MR3345605,MR3378833,CoarseGeometryPaper, AchimPhD, fuhr2017simplified, fuhr2023dilational}.

The present paper is concerned with two questions related to coorbit spaces defined
by the quasi-regular representations \eqref{eq:quasiregular_intro}
that can be traced back to the very beginnings of coorbit space theory,
namely the possibility of using \emph{reducible} quasi-regular representations
to define coorbit spaces and the question of when two different dilation groups
yield the same coorbit space.

\subsection{Previous work}

Before describing the content of the present paper in more detail, we describe the relevant
context and some recent developments.

First, although the original coorbit space theory assumed the group representation
to be \textit{irreducible}, that is, $\{0\}$ and $L^2 (\R^d)$ are the only
closed $\pi$-invariant subspaces of $L^2 (\R^d)$,
it was already realized from early on that an integrable reproducing kernel
(rather than irreducibility) is the essential assumption
guaranteeing most properties of coorbit spaces,
see, e.g., \cite[Remark 6.6]{grochenig1991describing}.
This was one of the key motivations for the study of a large class of dilation groups,
the so-called \emph{integrably admissible dilation groups} (cf.\ \Cref{sec:integrably}),
that guarantee an integrable quasi-regular representation; see \cite{currey2016integrable}.
Various concrete aspects of the coorbit spaces associated with such dilation groups
were studied in \cite{FuehrVelthoven}, and allowed to incorporate further scales of function spaces,
such as anisotropic Besov spaces \cite{bownik2005atomic}, into a common framework
with the already established coorbit spaces defined by irreducible,
square-integrable quasi-regular representations studied in,
e.g., \cite{MR3345605,MR3378833,CoarseGeometryPaper}.

Second, as already outlined above, it was realized early in the development of coorbit space theory
that the scale of (homogeneous, isotropic) Besov spaces in arbitrary dimensions could be understood as
coorbit spaces associated to the irreducible quasi-regular representation of the similitude group
$H_1 = \R^+ \cdot \mathrm{SO}(d)$.
However, it was also understood early on that with suitable choices of analyzing wavelets,
the full similitude group $H_1$ could be replaced by the one-parameter group
$H_2 = \mathbb{R}^+ \cdot I$ and yield the same coorbit space,
see, e.g., \cite[Remark (ii)]{groechenig1988unconditional}. 
This led naturally to the question of which dilation groups
yield different scales of coorbit spaces.
For dilation groups that act irreducibly, the paper \cite{CoarseGeometryPaper}
provided a coarse geometric approach to this question, and showed that whether two dilation groups
$H_1, H_2$ yield the same coorbit space can be decided by investigating
whether a suitable map between $H_1$ and $H_2$ is a quasi-isometry.

\subsection{Aims and contributions}

The main purpose of the present paper is to
extend the methods based on coarse geometry for the comparison of coorbit spaces developed in \cite{CoarseGeometryPaper} 
to the general setting of integrably admissible dilation groups \cite{FuehrVelthoven}, and thus obtain far-reaching criteria for the characterization of coorbit spaces and their symmetries.
In doing so, we resolve an open question on the uniqueness of a dual orbit
or essential frequency support of an integrably admissible dilation group
(cf.\ \cite[Section 2.1]{FuehrVelthoven}).
As an application and illustration of our results, we show that the inclusion of \textit{reducible}
integrably admissible dilation groups is necessary for coorbit space theory to cover anisotropic Besov spaces \cite{bownik2005atomic}: It was already known that these spaces are coorbit spaces with respect to one-parameter dilation groups \cite{FuehrVelthoven}, which necessarily act reducibly. Our Theorem \ref{thm:oneparameter} provides a complement to that observation by stating that these spaces do not have a characterization using an irreducibly acting dilation group. 

\subsection{Technical overview}

As in \cite{CoarseGeometryPaper}, our approach towards the comparison of coorbit spaces
hinges on the description of such a space as a \emph{Besov-type space}
defined by decomposition methods
\cite{triebel1977general,strockert1979decomposition,triebel1978spaces},
also called a \emph{Besov-type decomposition space}
\cite{feichtinger1985banach, VoigtlaenderEmbeddingsOfDecompositionSpaces}.
In contrast to the usual coorbit space norm (see, e.g., \Cref{eq:coorbit_intro}),
the Besov-type spaces are defined by means of a \emph{discrete} Littlewood-Paley-type norm.
For example, for $1 \leq p \leq 2$, the coorbit space $\Co(L^p(G))$ can alternatively be described
as those $f \in L^2 (\R^d)$ satisfying a norm condition of the form
\[
  \sum_{i \in I}
    \Bigl( |\det (h_i)|^{\frac{1}{2} - \frac{1}{p}} \cdot \big\| f \ast \varphi_i \big\|_{L^p} \Bigr)^p
  < \infty,
\]
where $(\varphi_i)_{i \in I}$ is such that the family of Fourier transforms 
$(\widehat{\varphi_i})_{i \in I}$ forms a suitable partition of unity
adapted to a frequency cover $\CalQ = (Q_i)_{i \in I}$ of the form $Q_i = h_i^{-T} Q$
for some discrete family $(h_i)_{i \in I}$ of points $h_i \in H$.
The realization of a coorbit space as a Besov-type space has been shown for irreducibly acting
dilation groups in \cite{MR3345605}, and for general integrably admissible dilation groups
in \cite{FuehrVelthoven}. This identification is an essential ingredient in our approach.

The significance of Besov-type spaces for our purposes is that a comparison of these spaces
in terms of geometric properties of associated frequency covers has been obtained in
\cite{VoigtlaenderEmbeddingsOfDecompositionSpaces}.
More precisely, the classification results in \cite{VoigtlaenderEmbeddingsOfDecompositionSpaces}
show that the coincidence of a scale of Besov-type spaces (hence, wavelet-type coorbit spaces)
is equivalent to the \emph{weak equivalence} (cf.\ \Cref{def:weakequivalence})
of the frequency covers defining the spaces.
The paper \cite{CoarseGeometryPaper} showed in turn that the weak equivalence of two covers
is equivalent to the ambient spaces being quasi-isometric relative to two so-called
\emph{cover-induced metrics} (see \Cref{thm:WeakEquivQuasiIso}).
This latter condition allows, in the case of covers induced by dilation groups,
to characterize when two dilation groups $H_1$ and $H_2$ induce the same coorbit spaces
in terms of a quasi-isometry between $H_1 \times C_1$ and $H_2 \times C_2$
for certain compact sets $C_1 \subseteq \R^d \setminus \{ 0 \}$ and $C_2 \subseteq \R^d \setminus \{ 0 \}$
(see \Cref{thm:char_coorbit_equiv_new}).

In comparison to coorbit spaces associated to irreducibly admissible dilation groups
in \cite{CoarseGeometryPaper}, there are several additional difficulties that arise
in the general setting of integrably admissible dilation groups treated in the present paper.
First of all, in contrast to an irreducibly acting dilation group, it is \emph{a priori}
not clear that an integrably admissible dilation group admits
a \emph{unique} associated essential frequency support (see \Cref{sec:integrably}).
This question is, however, of fundamental importance for our approach
towards comparing coorbit spaces, because the essential frequency support
enters crucially in the description of a coorbit space as a Besov-type space.
We show the uniqueness of the essential frequency support in \Cref{sec:unique_frequencysupport}.
Second, the essential frequency support is in general not singly generated
(i.e., the action $(h,\xi) \mapsto h^T \xi$ on the essential frequency support
is in general not transitive),
in contrast to the special case of an irreducible admissible dilation group.
As in \cite{FuehrVelthoven}, the fact that the frequency support is possibly generated
by an arbitrary compact set leads to various technicalities
(e.g., uniformity of certain estimates), as we point out throughout the text.

\subsection{Organization}

The paper is organized as follows: \Cref{sec:integrablyadmissible} recalls various properties
of integrably admissible dilation groups that will be used throughout the text.
In addition, the uniqueness of a frequency support of such a group
is shown in \Cref{sec:unique_frequencysupport}.
\Cref{sec:ConnectivityRespectingGroups} is devoted to showing that the \emph{full orbit map}
\[
  p :  H \times C \to \CalO, \quad (h,\xi) \mapsto h^{-T} \xi
  ,
\]
with $C \subseteq \CalO$ compact and $\CalO = H^T C$,
forms a quasi-isometry, under suitable (mild) technical assumptions,
and using suitable natural metrics on $H$ and $\CalO$.
Before proving this in \Cref{sec:coarseequivalence_proof}, all requisite background and notions
are recalled in the prior subsections.
The results from \Cref{sec:integrablyadmissible} and \Cref{sec:ConnectivityRespectingGroups}
are combined in \Cref{sec:coorbit_equivalence} to characterize when two dilation groups
yield the same coorbit space.
Lastly, the results of \Cref{sec:coorbit_equivalence} are applied in \Cref{sec:application}
to study anisotropic Besov spaces \cite{bownik2005atomic}
inside the framework of coorbit space theory.
The appendices contain some postponed proofs and auxiliary  results used in the main text.

\subsection*{Notation} The set of natural numbers (excluding $0$) is denoted by $\mathbb{N}$.
For two functions $f_1, f_2 : X \to [0, \infty)$ on a set $X$,
we write $f_1 \lesssim f_2$ if there exists $C > 0$ such that $f_1 (x) \leq C f_2 (x)$ for all $x \in X$.
We use the notation $f_1 \asymp f_2$ whenever $f_1 \lesssim f_2$ and $f_2 \lesssim f_1$.

\section{The essential frequency support of integrably admissible dilation groups}
\label{sec:integrablyadmissible}

This section recalls the class of integrably admissible dilation groups
studied in \cite{currey2016integrable, FuehrVelthoven}.
For such groups, we show that there exists at most one associated essential frequency support,
which plays a key role in establishing our main results on the classification of coorbit spaces.

\subsection{Integrably admissible dilation groups}
\label{sec:integrably}

Following \cite{currey2016integrable, FuehrVelthoven}, we say that a closed subgroup
$H \leq \mathrm{GL}(d,\R)$ is \emph{integrably admissible},
if there exists a set $\CalO \subseteq \R^d$ satisfying the following conditions:
\begin{enumerate}[label=(a\arabic*)]
  \item \label{enu:FrequencySupportFullMeasure}
        The set $\CalO \subseteq \R^d$ is open and of full measure (i.e., $\CalO^c$ is a null-set);

  \item \label{enu:FrequencySupportCompactlyGenerated}
        there exists a compact set $C \subseteq \CalO$ such that $\CalO = H^T C$;

  \item \label{enu:FrequencySupportProperness}
        for each compact set $K \subseteq \CalO$, the closed set
        \begin{align*}
          [K]
          & := \big\{
                 (h,\xi) \in H \times \CalO
                 \colon
                 (h^T \xi, \xi) \in K \times K
               \big\} 
          \subseteq H \times \CalO
        \end{align*}
        is compact.
 \end{enumerate}
Every set $\CalO \subseteq \R^d$ satisfying the conditions \ref{enu:FrequencySupportFullMeasure},
\ref{enu:FrequencySupportCompactlyGenerated} and \ref{enu:FrequencySupportProperness}
is referred to as an \emph{(essential) frequency support} associated to the dilation group $H$. 

The significance of the class of integrably admissible dilation groups
for our purposes is that it guarantees that the quasi-regular representation \eqref{eq:quasiregular_intro}
admits an admissible vector with an integrable matrix coefficient;
see \Cref{prop:CcInftyAdmissibleVectorExists}.
Both properties are used in the coorbit theory developed in \Cref{sec:coorbit_equivalence}. 

For two sets $U, V \subseteq \R^d$, we define the set
\begin{equation}
  ((U, V)) := \{ h \in H : h^T U \cap V \neq \emptyset \}.
  \label{eq:SpecialSetDefinition}
\end{equation}
The following characterization of integrably admissible dilation groups
is often used in the remainder.
See \cite[Proposition 2.9]{currey2016integrable} for its proof
(and also see \cite[Definition 2.2]{currey2016integrable}).

\begin{lemma}[\cite{currey2016integrable}] \label{lem:integrably_characterization}
    Let $H \leq \mathrm{GL}(d, \R)$ be a closed subgroup and let $\CalO \subseteq \mathbb{R}^d$
    be an open set that is $H^T$-invariant and of full measure.
    Then the following assertions are equivalent:
    \begin{enumerate}[label=(\roman*)]
        \item $H$ is integrably admissible with essential frequency support $\CalO$;

        \item There exists some open, relatively compact set $C \subseteq \CalO$ with $\overline{C} \subseteq \CalO$ and $H^T C = \CalO$,
              and for any such set $C$, the set $((C, C))$ is relatively compact in $H$.
    \end{enumerate}
\end{lemma}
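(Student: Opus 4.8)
\emph{Proof sketch.} Since this is \cite[Proposition 2.9]{currey2016integrable}, the ``proof'' in the paper is just a pointer; I indicate the argument one could give. The plan is to prove the two implications separately, the main bookkeeping being the passage between the sets $[K]$ appearing in \ref{enu:FrequencySupportProperness} and the sets $((\cdot,\cdot))$ introduced in \eqref{eq:SpecialSetDefinition}. The key elementary fact, used in both directions, is that for every compact $K \subseteq \CalO$ the image of $[K]$ under the projection $\mathrm{pr}_H \colon H \times \CalO \to H$ equals $((K,K))$: indeed, $(h,\xi) \in [K]$ means $\xi \in K$ and $h^T \xi \in K$, and such a $\xi$ exists precisely when $h^T K \cap K \neq \emptyset$. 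Furthermore $[K]$ is closed in $H \times \CalO$, being the preimage of the closed set $K \times K$ under the continuous map $(h,\xi) \mapsto (h^T \xi, \xi)$, and it is contained in $((K,K)) \times K$. Hence $[K]$ is compact as soon as $((K,K))$ is relatively compact in $H$, while conversely $((K,K)) = \mathrm{pr}_H([K])$ is compact whenever $[K]$ is.

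For the implication (i)$\Rightarrow$(ii), I would first exhibit an admissible set $C$: starting from the compact generating set of \ref{enu:FrequencySupportCompactlyGenerated} and using that $\CalO$ is open, a finite-subcover argument yields an open, relatively compact $C$ with $\overline{C} \subseteq \CalO$ for which still $H^T C = \CalO$ (the inclusion ``$\subseteq$'' being automatic from $H^T$-invariance of $\CalO$). For \emph{any} such $C$, the closure $\overline{C}$ is a compact subset of $\CalO$, so \ref{enu:FrequencySupportProperness} applies with $K = \overline{C}$; since $((C,C)) \subseteq ((\overline{C},\overline{C})) = \mathrm{pr}_H([\overline{C}])$ and $[\overline{C}]$ is compact, the set $((C,C))$ is relatively compact.

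For the converse (ii)$\Rightarrow$(i), condition \ref{enu:FrequencySupportFullMeasure} is a hypothesis and \ref{enu:FrequencySupportCompactlyGenerated} follows by taking the compact set $\overline{C}$, so the point is \ref{enu:FrequencySupportProperness}. Given a compact $K \subseteq \CalO = H^T C$ with $C$ open, the sets $\{ h^T C : h \in H \}$ form an open cover of $K$, so $K \subseteq \bigcup_{j=1}^{n} h_j^T C$ for finitely many $h_j \in H$. If $h \in ((K,K))$, choose $\xi \in K$ with $h^T \xi \in K$, write $\xi \in h_j^T C$ and $h^T \xi \in h_i^T C$, and use $(AB)^T = B^T A^T$ to rewrite this as $(h_j h h_i^{-1})^T C \cap C \neq \emptyset$, i.e.\ $h \in h_j^{-1} \, ((C,C)) \, h_i$. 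Hence $((K,K)) \subseteq \bigcup_{i,j=1}^{n} h_j^{-1} \, ((C,C)) \, h_i$, which is relatively compact because $((C,C))$ is and left and right translations are homeomorphisms of $H$. By the fact recorded above, $[K]$ is then compact, which gives \ref{enu:FrequencySupportProperness}.

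The step I expect to be the main obstacle is the covering/conjugation computation in the last paragraph: one must keep track that transposition reverses the order of the group elements, and check that the finitely many translates of $((C,C))$ cover all of $((K,K))$ uniformly rather than with a choice depending on $h$. The remaining ingredients --- closedness of $[K]$, the construction of the open relatively compact $C$, and the identification $\mathrm{pr}_H([K]) = ((K,K))$ --- are routine.
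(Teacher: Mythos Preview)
Your proposal is correct. The paper does not give its own proof of this lemma; it simply cites \cite[Proposition~2.9]{currey2016integrable}, so there is no ``paper's proof'' to compare against, and you acknowledge this explicitly. The argument you sketch is sound: the identification $\mathrm{pr}_H([K]) = ((K,K))$ together with $[K] \subseteq ((K,K)) \times K$ cleanly reduces condition~\ref{enu:FrequencySupportProperness} to relative compactness of $((K,K))$, and the finite-cover/conjugation computation in (ii)$\Rightarrow$(i) is correct (the transpose bookkeeping $(h_j h h_i^{-1})^T = h_i^{-T} h^T h_j^T$ checks out).
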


We collect the following consequences of \Cref{lem:integrably_characterization};
see \cite[Lemma 2.4]{currey2016integrable} and \cite[Lemma 2.5]{currey2016integrable}.

\begin{corollary}[\cite{currey2016integrable}] \label{cor:basic_integrably}
Let $H \leq \mathrm{GL}(d, \R)$ be integrably admissible with essential frequency support $\CalO$.
Then the following assertions hold:
\begin{enumerate}[label=(\roman*)]
  \item For each $\xi \in \CalO$, the stabilizer subgroup
        $H_{\xi} := \{ h \in H : h^T \xi = \xi \}$ is compact.

  \item For arbitrary compact sets $C_1, C_2 \subseteq \CalO$, the set $((C_1, C_2))$ is compact.
\end{enumerate}
\end{corollary}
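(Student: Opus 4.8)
The plan is to derive both assertions from property \ref{enu:FrequencySupportProperness} in the definition of the essential frequency support $\CalO$ (that $[K]$ is compact whenever $K \subseteq \CalO$ is compact), with assertion (i) then arising as a special case of assertion (ii).

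I would prove (ii) first. Fix compact sets $C_1, C_2 \subseteq \CalO$ and set $K := C_1 \cup C_2$, which is again a compact subset of $\CalO$; hence $[K] \subseteq H \times \CalO$ is compact by \ref{enu:FrequencySupportProperness}. Consider the continuous map
\[
  \Phi : H \times \CalO \to \R^d \times \R^d ,
  \qquad
  \Phi(h, \xi) = (h^T \xi, \, \xi) .
\]
Its preimage $\Phi^{-1}(C_2 \times C_1)$ is closed in $H \times \CalO$, and it satisfies $\Phi^{-1}(C_2 \times C_1) \subseteq [K]$, because $(h,\xi) \in \Phi^{-1}(C_2 \times C_1)$ forces $(h^T \xi, \xi) \in C_2 \times C_1 \subseteq K \times K$. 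As a closed subset of the compact set $[K]$, the set $\Phi^{-1}(C_2 \times C_1)$ is thus compact. Finally, unwinding the definition \eqref{eq:SpecialSetDefinition} shows that $((C_1, C_2))$ is exactly the image of $\Phi^{-1}(C_2 \times C_1)$ under the continuous first-coordinate projection $H \times \CalO \to H$; being the continuous image of a compact set, $((C_1, C_2))$ is compact, which is (ii).

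Assertion (i) follows at once by applying (ii) with $C_1 = C_2 = \{\xi\}$, a compact subset of $\CalO$ since $\xi \in \CalO$: indeed $((\{\xi\}, \{\xi\})) = \{ h \in H : h^T \xi = \xi \} = H_\xi$, so $H_\xi$ is compact; as it is evidently a subgroup of $H$, it is a compact subgroup.

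For completeness, (ii) could alternatively be obtained from \Cref{lem:integrably_characterization} without invoking \ref{enu:FrequencySupportProperness} directly: with $C$ an open, relatively compact set as in part (ii) of that lemma, the open sets $\{ h^T C : h \in H \}$ cover $\CalO$, hence cover $C_1 \cup C_2$, so finitely many of them do; since each $((h_i^T C, h_j^T C))$ is a left-and-right translate of the relatively compact set $((C, C))$, the set $((C_1, C_2))$ is contained in a finite union of relatively compact sets and is therefore relatively compact, while its closedness is checked directly from the definition. I expect the first argument to be the cleaner one, and I do not foresee any genuine obstacle; the only point that needs attention is the inclusion $\Phi^{-1}(C_2 \times C_1) \subseteq [K]$, which is precisely what allows the compactness of $[K]$ to be brought to bear.
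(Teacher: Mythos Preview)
Your proof is correct. The paper does not give its own proof of this corollary; it simply cites \cite[Lemmas 2.4 and 2.5]{currey2016integrable} and frames the result as a consequence of \Cref{lem:integrably_characterization}. Your primary argument, working directly from defining property \ref{enu:FrequencySupportProperness} via the preimage $\Phi^{-1}(C_2 \times C_1) \subseteq [K]$ and projecting, is clean and self-contained; your alternative sketch via \Cref{lem:integrably_characterization} is closer in spirit to how the paper positions the corollary, though the paper leaves the actual derivation to the reference.
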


In the following example, two classes of integrably admissible dilation groups are listed
that have been studied in the literature before, see, e.g.,
\cite{bruna2015characterizing,laugesen2002characterization,currey2016integrable,fuhr2010generalized,MR1151331}.

\begin{example}[Integrably admissible dilation groups]\label{ex:dilations}
  Let $H \leq \mathrm{GL}(d, \R)$ be closed.
  \begin{enumerate}[label=(\arabic*)]
    \item $H$ is called an \emph{irreducible admissible dilation group} if there exists
          an open singly generated orbit $\mathcal{O} := H^{T} \xi_0 \subseteq \R^d$
          of full measure for some $\xi_0 \in \R^d$
          for which the isotropy group $H_{\xi_0}$ is compact.
          The irreducibly admissible dilation groups are precisely those for which the quasi-regular 
          representation on is irreducible, cf. \cite[Corollary 21]{fuhr2010generalized}. 
          Any irreducibly admissible dilation group is integrably admissible.

    \item A one-parameter subgroup $H = \exp(\R A)$ is integrably admissible if and only if
          the real parts of all eigenvalues of $A$ are either
          strictly negative or strictly positive; see \cite[Theorem 1.1]{MR1151331}
          and \cite[Proposition 6.3]{FuehrVelthoven}.
  \end{enumerate}
\end{example}

\subsection{Essential frequency support}\label{sec:unique_frequencysupport}

The aim of this section is to prove that there exists at most one essential frequency support
for an integrably admissible dilation group.
This question was discussed, but left open, in \cite[Section 2]{FuehrVelthoven}. 

We first prove a lemma that allows us to reduce the general question
to the case where different frequency supports fulfill a containment relation. 

\begin{lemma}\label{lem:DualOrbitUnion}
  Let $H \leq \GL(d, \R)$ be integrably admissible
  with associated essential frequency supports $\CalO_1, \CalO_2$. 
  Then $\CalO := \CalO_1 \cup \CalO_2$ is an essential frequency support associated to $H$.
\end{lemma}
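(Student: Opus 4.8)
The plan is to verify the three defining conditions \ref{enu:FrequencySupportFullMeasure}, \ref{enu:FrequencySupportCompactlyGenerated}, \ref{enu:FrequencySupportProperness} for $\CalO := \CalO_1 \cup \CalO_2$. Conditions \ref{enu:FrequencySupportFullMeasure} and \ref{enu:FrequencySupportCompactlyGenerated} are essentially free: a union of two open sets is open, a union of two sets of full measure has full-measure (the complement is the intersection of two null-sets, hence a null-set), and if $C_1, C_2$ are compact with $\CalO_i = H^T C_i$, then $C := C_1 \cup C_2$ is compact with $H^T C = H^T C_1 \cup H^T C_2 = \CalO$. So the real work is condition \ref{enu:FrequencySupportProperness}, the properness of the action.

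For \ref{enu:FrequencySupportProperness}, I would use the characterization in \Cref{lem:integrably_characterization}(ii), which says $H$ is integrably admissible with frequency support $\CalO$ iff for \emph{some} (equivalently every) open, relatively compact $C$ with $\overline{C} \subseteq \CalO$ and $H^T C = \CalO$, the set $((C,C))$ is relatively compact. The key observation is that $\CalO_1$ and $\CalO_2$ are both $H^T$-invariant and open, hence so is $\CalO = \CalO_1 \cup \CalO_2$; moreover $\CalO \cap \CalO_1 = \CalO_1$ and $\CalO \cap \CalO_2 = \CalO_2$, so each $\CalO_i$ is an $H^T$-invariant open subset of $\CalO$. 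Pick relatively compact open sets $C_i \subseteq \CalO_i$ with $\overline{C_i} \subseteq \CalO_i$ and $H^T C_i = \CalO_i$ (these exist since $H$ is integrably admissible with support $\CalO_i$, via \Cref{lem:integrably_characterization}(ii)), and set $C := C_1 \cup C_2$. Then $C$ is open, relatively compact, $\overline{C} \subseteq \CalO$, and $H^T C = \CalO$. By \Cref{lem:integrably_characterization}(ii) (applied to the candidate support $\CalO$), it suffices to show that $((C, C))$ is relatively compact in $H$. Expanding,
\[
  ((C, C))
  = \big\{ h \in H : h^T C \cap C \neq \emptyset \big\}
  \subseteq \bigcup_{i,j \in \{1,2\}} ((C_i, C_j)).
\]
Now each $\overline{C_i}$ is a compact subset of $\CalO$, and for indices with $i = j$ the set $((\overline{C_i}, \overline{C_i}))$ is compact by \Cref{cor:basic_integrably}(ii) applied with the frequency support $\CalO_i$. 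The potentially problematic terms are the "mixed" ones, $((C_1, C_2))$ and $((C_2, C_1))$, where $C_1 \subseteq \CalO_1$ and $C_2 \subseteq \CalO_2$ sit in different frequency supports; here I cannot directly invoke \Cref{cor:basic_integrably}(ii) for a single $\CalO_i$.

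The main obstacle is precisely handling these mixed terms: showing $((C_1, C_2))$ is relatively compact. To do this, observe that if $h \in ((C_1, C_2))$ then there is some $\xi \in C_1$ with $h^T \xi \in C_2 \subseteq \CalO_2$; but $\xi \in C_1 \subseteq \CalO_1$ as well, so $\xi$ lies in $\CalO_1 \cap \CalO_2$. In fact, the set of such $\xi$ is contained in $K := \overline{C_1} \cap \{\xi : h^T \xi \in \overline{C_2} \text{ for some } h\}$; more usefully, I would argue that $h^T \overline{C_1} \cap \overline{C_2} \neq \emptyset$ forces $h^T \xi \in \overline{C_2} \subseteq \CalO_2$ for some $\xi \in \overline{C_1}$, and by $H^T$-invariance of $\CalO_2$ we get $\xi = h^{-T}(h^T\xi) \in \CalO_2$, so $\xi \in \overline{C_1} \cap \CalO_2$. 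Since $\overline{C_1} \subseteq \CalO_1$ and $\overline{C_1}$ is compact, the closed set $\overline{C_1} \cap (\CalO_2)$ need not be compact a priori — but $\overline{C_1}$ is compact, so any closed subset of it is compact; hence $D_1 := \overline{C_1} \cap \overline{\{\xi \in \CalO_2 : \ldots\}}$, or more cleanly $D := \overline{C_1}$ itself, together with the reverse orbit argument, shows $h \in ((\overline{C_1}, \overline{C_2}))$ where now one should pass to the frequency support $\CalO = \CalO_1 \cup \CalO_2$ — but that is what we are trying to prove. The correct route, which I would carry out carefully, is: since $\xi \in \overline{C_1} \cap \CalO_2$ and $\CalO_2 = H^T C_2$, every point of $\overline{C_1} \cap \CalO_2$ lies in $H^T C_2$, and by a compactness argument using condition \ref{enu:FrequencySupportProperness} for $\CalO_2$ (the set $[\overline{C_2} \cup K]$ is compact for appropriate compact $K \subseteq \CalO_2$), one controls all $h$ with $h^T \overline{C_1} \cap \overline{C_2} \neq \emptyset$. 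Concretely: let $K_2 := \overline{C_1} \cap \CalO_2$; this is a relatively closed subset of $\CalO_2$ contained in the compact set $\overline{C_1}$, hence compact, and $K_2 \subseteq \CalO_2$. Then $((C_1, C_2)) \subseteq ((K_2, \overline{C_2}))$, and since $K_2, \overline{C_2}$ are both compact subsets of $\CalO_2$, \Cref{cor:basic_integrably}(ii) (for the support $\CalO_2$) gives that $((K_2, \overline{C_2}))$ is compact. The symmetric argument handles $((C_2, C_1))$. Combining, $((C,C))$ is contained in a finite union of compact sets, hence relatively compact, and \Cref{lem:integrably_characterization}(ii) finishes the proof. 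The one subtlety to nail down is the claim that $\overline{C_1} \cap \CalO_2$ is compact, which I would justify by noting it equals $\overline{C_1} \cap \overline{\CalO_2^{}}^{\,c\,c}$... more simply: $\CalO_2$ is open, so $\overline{C_1}\cap\CalO_2$ is the intersection of a compact set with an open set, which is only locally compact in general — so instead I would take $K_2 := \{\xi \in \overline{C_1} : h^T\xi \in \overline{C_2} \text{ for some } h \in H \text{ with } h^T\overline{C_1}\cap\overline{C_2}\neq\emptyset\}$ and show directly this is a closed (hence compact) subset of $\overline{C_1}$ lying in $\CalO_2$, using that $\overline{C_2} \subseteq \CalO_2$ is compact and $\CalO_2$ is $H^T$-invariant.
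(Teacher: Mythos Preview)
Your route via \Cref{lem:integrably_characterization} differs from the paper's, which verifies condition \ref{enu:FrequencySupportProperness} directly: given compact $K \subseteq \CalO$, the paper covers $K$ by finitely many compact neighborhoods $K_{x_\ell}$, each lying in a single $\CalO_{i_{x_\ell}}$, notes that each $[K_{x_\ell}]$ is compact by \ref{enu:FrequencySupportProperness} for $\CalO_{i_{x_\ell}}$, and concludes that $[K]$ is a closed subset of $\bigcup_\ell [K_{x_\ell}]$.

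You correctly isolate the essential obstacle, the mixed term $((C_1,C_2))$, but your resolution does not close the gap. Your final candidate
\[
  K_2 := \bigl\{\xi \in \overline{C_1} : h^T\xi \in \overline{C_2} \text{ for some } h \in H\bigr\}
\]
equals $\overline{C_1} \cap H^T\overline{C_2}$; and since $\overline{C_2} \subseteq \CalO_2$ with $\CalO_2 = H^T C_2 \subseteq H^T\overline{C_2} \subseteq H^T\CalO_2 = \CalO_2$, one has $H^T\overline{C_2} = \CalO_2$, so $K_2 = \overline{C_1} \cap \CalO_2$ --- exactly the set you had already (rightly) flagged as merely compact-intersect-open. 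The appended clause ``with $h^T\overline{C_1} \cap \overline{C_2} \neq \emptyset$'' is vacuous (the witnessing $h$ itself satisfies it), and the assertion that $K_2$ is closed in $\overline{C_1}$ is unjustified: $K_2 = \overline{C_1} \cap \CalO_2$ is relatively \emph{open} in $\overline{C_1}$, not closed. In short, your last paragraph restates the difficulty rather than removing it, and a genuine argument bounding $((\overline{C_1},\overline{C_2}))$ is still missing. (It is worth noting that the paper's one-line inclusion $[K] \subseteq \bigcup_\ell [K_{x_\ell}]$ conceals the same cross-term issue: an element $(h,\xi)$ with $\xi \in K_{x_\ell}$ but $h^T\xi \in K_{x_m}$ for $m \neq \ell$ need not lie in any single $[K_{x_\ell}]$.)
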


\begin{proof}
  Clearly, $\CalO \subseteq \R^d$ is open and of full measure.
  By definition, we have $\CalO_i = H^T C_i$ with $C_i \subseteq \CalO_i$ compact.
  Hence, setting $C := C_1 \cup C_2$, it follows that $C \subseteq \CalO$ is compact
  and satisfies $\CalO = H^T C$, showing defining condition \ref{enu:FrequencySupportCompactlyGenerated}. 
  Lastly, let $K \subseteq \CalO$ be compact.
  For each $x \in K$, there exist $i_x \in \{ 1,2 \}$ and a compact set $K_x \subseteq \CalO_{i_x}$
  satisfying $x \in K_x^{\circ}$.
  By compactness of $K$, there exist $N \in \N$ and points $x_1,\dots,x_N \in K$ satisfying
  $K \subseteq \bigcup_{\ell=1}^N K_{x_\ell}^{\circ} \subseteq \bigcup_{\ell=1}^N K_{x_\ell}$.
  Since $[K_{x_\ell}] \subseteq H \times K_{x_\ell} \subseteq H \times \CalO_{i_{x_\ell}} \subseteq H \times \CalO$
  is compact for each $\ell$, it follows that $[K] \subseteq H \times \CalO$ is a closed set satisfying
  $[K] \subseteq \bigcup_{\ell=1}^N [K_{x_\ell}]$, so that $[K] \subseteq H \times \CalO$
  is compact.
\end{proof}

In addition to \Cref{lem:DualOrbitUnion}, we will use two additional lemmata
for proving the uniqueness of the frequency support.
For this, given an integrably admissible dilation group $H$ 
and a compact set $K \subseteq \R^d$, we define
\[
  M_{K, H} f : 
  H \to [0,\infty), \quad
  h \mapsto \sup_{\xi \in K}
              f(h^{T} \xi)
\]
for any continuous function $f : \R^d \to [0,\infty)$.
The function $M_{K,H} f$ is lower semicontinuous
(as a supremum of continuous functions),
and hence Borel measurable. 

\begin{lemma}\label{lem:SpecialIntegrability}
  Let $H \leq \GL(d, \mathbb{R})$ be integrably admissible with Haar measure $\mu_H$,
  and let $\CalO \subseteq \R^d$ be an essential frequency support associated to $H$.
  Let $K \subseteq \CalO$ be compact
  and let $f \in C_c(\CalO)$ with $f \geq 0$.
  Then
  \[
    \int_H
      M_{K, H} f (h)
    \, d \mu_H(h)
    < \infty.
    \]
  \end{lemma}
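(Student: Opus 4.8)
The plan is to observe that $M_{K,H} f$ is a bounded, Borel measurable function whose support is contained in a \emph{compact} subset of $H$; the finiteness of the integral is then immediate, since the Haar measure of a compact subset of the locally compact group $H$ is finite.

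First I would set $L := \supp f$, which is a compact subset of $\CalO$ because $f \in C_c(\CalO)$. For $h \in H$, the value $M_{K,H} f(h) = \sup_{\xi \in K} f(h^{T} \xi)$ is nonzero only if there is some $\xi \in K$ with $f(h^{T}\xi) \neq 0$, that is, with $h^{T} \xi \in L$. Hence
\[
  \{ h \in H : M_{K,H} f(h) \neq 0 \}
  \subseteq \{ h \in H : h^{T} K \cap L \neq \emptyset \}
  = ((K, L)),
\]
with $((\cdot,\cdot))$ as in \eqref{eq:SpecialSetDefinition}. Since $K, L \subseteq \CalO$ are both compact, \Cref{cor:basic_integrably}(ii) shows that $((K,L))$ is compact, and therefore of finite Haar measure.

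It then remains to note that $M_{K,H} f$ is bounded: for every $h \in H$ we have $M_{K,H} f(h) = \sup_{\xi \in K} f(h^{T}\xi) \leq \| f \|_{\sup} < \infty$. Combining this with the measurability of $M_{K,H} f$ recorded just before the statement and with the support property from the previous step yields
\[
  \int_H M_{K,H} f(h) \, d\mu_H(h)
  \leq \| f \|_{\sup} \cdot \mu_H\big( ((K,L)) \big)
  < \infty,
\]
which is the claim.

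The only point where integrable admissibility of $H$ is genuinely used — and the only step that is not entirely routine — is the compactness of $((K,L))$, which is exactly \Cref{cor:basic_integrably}(ii) (ultimately a consequence of \Cref{lem:integrably_characterization}). Everything else relies only on $f$ having compact support in $\CalO$ and on $H$, being a closed subgroup of $\GL(d,\R)$, being a locally compact group whose Haar measure is finite on compact sets.
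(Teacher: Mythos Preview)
Your proof is correct and takes essentially the same approach as the paper: bound $M_{K,H} f$ by $\|f\|_{\sup}$ and show that its support is contained in a compact subset of $H$. The only cosmetic difference is that you invoke \Cref{cor:basic_integrably}(ii) for the compactness of $((K,L))$, whereas the paper goes back to defining condition~\ref{enu:FrequencySupportProperness} directly via the set $[\widetilde{K}]$ with $\widetilde{K} = K \cup \supp f$ and projects to $H$; since \Cref{cor:basic_integrably} is itself a consequence of that condition, the two arguments are effectively the same.
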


\begin{proof}
  Define the compact set $\widetilde{K} := K \cup \supp f \subseteq \CalO$.
  Note that if $M_{K,H} f(h) \neq 0$, then there exists $\xi \in K \subseteq \widetilde{K}$
  satisfying $h^T \xi \in \supp f \subseteq \widetilde{K}$,
  and hence $(h,\xi) \in [\widetilde{K}]$,
  which is compact by defining condition \ref{enu:FrequencySupportProperness}.
  Therefore, using the projection $\pi_1 : H \times \R^d \to H, \; (h,\xi) \mapsto h$,
  it follows that $\supp (M_{H,K} f) \subseteq K_0 := \pi_1([\widetilde{K}])$.
  Since $0 \leq M_{H,K} f \leq \| f \|_{\sup}$, this implies
  \(
    \int_H
      M_{H,K} f(h)
    \, d \mu_H (h)
    \leq \| f \|_{\sup} \cdot \mu_H (K_0)
    < \infty ,
  \)
  as claimed.
\end{proof}

\begin{lemma}\label{lem:NonIntegrability}
  Let $H \leq \GL(d, \R)$ be integrably admissible with Haar measure $\mu_H$,
  and let $\CalO = H^T C$ with a compact set $C \subseteq \CalO$ be an
  essential frequency support associated to $H$.
  Let $f : \R^d \to [0,\infty)$ be continuous.
  Suppose that there exists $\xi_0 \in \partial \CalO$ satisfying $f(\xi_0) \neq 0$.
  Then
  \[
    \int_H
      M_{C, H} f(h)
    \, d \mu_H (h)
    = \infty .
  \]
\end{lemma}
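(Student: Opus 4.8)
The plan is to produce a subset of $H$ of infinite Haar measure on which $M_{C,H} f$ is bounded below by a fixed positive constant. Since $\CalO$ is open, the hypothesis $\xi_0 \in \partial \CalO$ gives $\xi_0 \in \overline{\CalO} \setminus \CalO$, so we may pick a sequence $\eta_n \in \CalO$ with $\eta_n \to \xi_0$, and, using $\CalO = H^T C$, write $\eta_n = h_n^T c_n$ with $h_n \in H$ and $c_n \in C$. The first key point is that $(h_n)_n$ has no subsequence converging in $H$: if $h_{n_k} \to h \in H$ along a subsequence, then, passing to a further subsequence along which $c_{n_k} \to c \in C$ (compactness of $C$), we would obtain $\eta_{n_k} = h_{n_k}^T c_{n_k} \to h^T c \in H^T C = \CalO$, contradicting $\eta_n \to \xi_0 \notin \CalO$. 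Since $H$ is locally compact and metrizable, it follows that every compact subset of $H$ contains only finitely many of the points $h_n$.

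Next I would fix the constants governing the lower bound. Put $\delta := \tfrac12 f(\xi_0) > 0$; by continuity of $f$, the set $\{ f > \delta \}$ is an open neighbourhood of $\xi_0$, so there is $r > 0$ with $B(\xi_0, 2r) \subseteq \{ f > \delta \}$. As $v \mapsto v^T \xi_0$ is continuous with value $\xi_0$ at $v = e$, I may choose a compact symmetric neighbourhood $V$ of the identity in $H$ with $\| v^T \xi_0 - \xi_0 \| < r$ for all $v \in V$, and set $M := \sup_{v \in V} \| v^T \|_{\mathrm{op}} < \infty$. These choices are tailored to the following uniform estimate: for $h = h_n v$ with $v \in V$ one has $h^T c_n = v^T (h_n^T c_n) = v^T \eta_n$, and as soon as $\| \eta_n - \xi_0 \| < r/M$ this yields
\[
  \| v^T \eta_n - \xi_0 \|
  \;\le\; M \| \eta_n - \xi_0 \| + \| v^T \xi_0 - \xi_0 \|
  \;<\; r + r = 2 r ,
\]
so that $v^T \eta_n \in \{ f > \delta \}$ and hence $M_{C,H} f(h) \ge f(h^T c_n) = f(v^T \eta_n) > \delta$, where we used $c_n \in C$. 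Thus $M_{C,H} f > \delta$ on the entire translate $h_n V$ for all sufficiently large $n$.

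It remains to turn these translates into a subfamily of infinite total measure. Using that every compact subset of $H$ contains only finitely many $h_n$, I would greedily extract a subsequence $(h_{n_j})_j$ with $h_{n_{j+1}} \notin \bigcup_{i \le j} h_{n_i} V V^{-1}$ for every $j$; since $V$ is symmetric, this makes the translates $h_{n_j} V$ pairwise disjoint. After discarding finitely many indices we may also assume $\| \eta_{n_j} - \xi_0 \| < r/M$ for all $j$, so that $M_{C,H} f > \delta$ on each $h_{n_j} V$ by the previous step. Since $\mu_H$ is left-invariant and $V$ has nonempty interior, $\mu_H(h_{n_j} V) = \mu_H(V) > 0$, whence
\[
  \int_H M_{C,H} f \, d\mu_H
  \;\ge\; \int_{\bigsqcup_j h_{n_j} V} M_{C,H} f \, d\mu_H
  \;\ge\; \delta \sum_j \mu_H(V)
  \;=\; \infty ,
\]
with $M_{C,H} f$ Borel measurable as already noted. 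The step I expect to be the main obstacle is precisely this extraction: a priori the escaping sequence $(h_n)_n$ may repeatedly return near its earlier terms, so the translates $h_n V$ need not be pairwise disjoint on their own, and it is the greedy selection — relying crucially on $(h_n)_n$ eventually leaving every compact subset of $H$ — that makes the measures sum to $\infty$.
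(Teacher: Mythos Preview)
Your proof is correct and follows essentially the same approach as the paper: pick an approximating sequence $\eta_n = h_n^T c_n \to \xi_0$, argue by compactness of $C$ that the $h_n$ eventually leave every compact subset of $H$, greedily extract a subsequence whose $V$-translates are pairwise disjoint, and use continuity of $f$ to bound $M_{C,H} f$ from below on each translate. The only cosmetic differences are that the paper merges your ``no convergent subsequence'' observation and the greedy extraction into a single contradiction argument, and obtains the lower bound via joint continuity of $(q,\xi) \mapsto f(q^T \xi)$ rather than your explicit operator-norm splitting.
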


\begin{proof}
  Since $\xi_0 \in \partial \CalO$, there exists a sequence
  $(\xi_n)_{n \in \N}$ in  $\CalO = H^T C$ satisfying $\xi_n \to \xi_0$ as $n \to \infty$. 
  Write $\xi_n = h_n^T c_n$ for some $h_n \in H$ and $c_n \in C$.
  Let $Q \subseteq H$ be a compact unit neighborhood.
  The remainder of the proof is split into two steps.
  \\~\\
  \textbf{Step 1.} In this step, we show that
  for each $N \in \N$ there exists $M = M(N) \in \N$ such that
  \[
    h_M Q \cap \bigcup_{\ell=1}^N h_\ell Q = \emptyset.
  \]
  Arguing by contradiction, assume there exists $N \in \N$ such that
  $h_M Q \cap \bigcup_{\ell=1}^N h_\ell Q \neq \emptyset$
  and hence $h_M \in \bigcup_{\ell=1}^N h_\ell Q Q^{-1} =: Q'$
  for every $M \in \N$.
  Then $\big( (h_M, c_M) \big)_{M \in \N}$ is a sequence in the compact set $Q' \times C$,
  so that there exists a subsequence $(M_k)_{k \in \N}$ and $(h,c) \in Q' \times C$
  satisfying $(h_{M_k}, c_{M_k}) \to (h,c)$ as $k \to \infty$.
  This implies
  \[
    \xi_0
    = \lim_{k \to \infty} \xi_{M_k}
    = \lim_{k \to \infty} h_{M_k}^T c_{M_k}
    = h^T c \in H^T C
    = \CalO
    = \CalO^{\circ},
  \]
  which contradicts the assumption $\xi_0 \in \partial \CalO$.
\\~\\
\textbf{Step 2.} By use of Step 1, we easily obtain a subsequence $(h_{n_\ell})_{\ell \in \N}$
  such that $(h_{n_\ell} Q)_{\ell \in \N}$ is pairwise disjoint.
  By definition of $M_{C, H} f$, it follows that
  \begin{align*}
    \int_{H}
      M_{C, H} f(h)
    \, d \mu_H(h)
    & \geq \sum_{\ell=1}^\infty
             \int_{h_{n_\ell} Q}
               M_{C, H} f(h)
             \, d \mu_H(h)
      \geq \sum_{\ell=1}^\infty
             \int_{h_{n_\ell} Q}
               f(h^T c_{n_\ell})
             \, d \mu_H(h) \\
    & =    \sum_{\ell=1}^\infty
             \int_{Q}
               f(q^T h_{n_\ell}^T c_{n_\ell})
             \, d \mu_H(q)
      =    \sum_{\ell=1}^\infty
             \int_{Q}
               f(q^T \xi_{n_\ell})
             \, d \mu_H(q).
  \end{align*}
  Since $\eps := f(\xi_0) > 0$ and $f$ is continuous, there exists an open set
  $Q_0 \subseteq Q$ and some $\delta > 0$ satisfying $f(q^T \xi) \geq \frac{\eps}{2}$
  for all $q \in Q_0$ and all $\xi \in \R^d$ with $\| \xi - \xi_0 \| \leq \delta$.
  In addition, since $\xi_{n_\ell} \to \xi_0$, there exists $\ell_0 \in \N$ satisfying
  $\| \xi_{n_\ell} - \xi_0 \| \leq \delta$ for all $\ell \geq \ell_0$.
  Overall, this implies that
  \[
    \int_H
      M_{C, H} f(h)
    \, d \mu_H(h)
    \geq \sum_{\ell=\ell_0}^\infty
           \int_{Q_0}
             f(q^T \xi_{n_\ell})
           \, d \mu_H(q)
    \geq \sum_{\ell=\ell_0}^\infty
           \mu_H(Q_0) \frac{\eps}{2}
    =    \infty ,
  \]
  which finishes the proof.
\end{proof}

For irreducibly admissible dilation groups (see \Cref{ex:dilations}),
a special case of \Cref{lem:NonIntegrability} (with $C$ being a singleton)
was obtained by A.\ Burtscheidt and the third named author;
see Proposition 5.4.1 in the thesis \cite{AchimPhD}. 

A combination of the preceding lemmas gives the desired uniqueness result.

\begin{theorem}\label{thm:DualOrbitUniqueness}
  Let $H \leq \GL(d, \R)$ be integrably admissible.
  Then the essential frequency support $\CalO \subseteq \R^d$ associated to $H$ is unique.
\end{theorem}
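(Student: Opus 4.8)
The plan is to combine \Cref{lem:DualOrbitUnion} with \Cref{lem:SpecialIntegrability} and \Cref{lem:NonIntegrability} to reach a contradiction unless two frequency supports coincide. Suppose $\CalO_1$ and $\CalO_2$ are both essential frequency supports associated to $H$. By \Cref{lem:DualOrbitUnion}, the union $\CalO := \CalO_1 \cup \CalO_2$ is also an essential frequency support. Since $\CalO_1 \subseteq \CalO$ and both are open of full measure, it suffices by symmetry to show that any essential frequency support $\CalO_1$ contained in another essential frequency support $\CalO$ must in fact equal $\CalO$; applying this with the roles of $\CalO_1$ and $\CalO_2$ then gives $\CalO_1 = \CalO = \CalO_2$.

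So assume $\CalO_1 \subseteq \CalO$ are both essential frequency supports, and suppose for contradiction that $\CalO_1 \subsetneq \CalO$. Because $\CalO_1$ is open of full measure and $\CalO$ is open, the boundary $\partial \CalO_1$ meets $\CalO$: indeed, if $\partial \CalO_1 \cap \CalO = \emptyset$, then $\CalO$ would be partitioned into the two relatively open sets $\CalO_1$ and $\CalO \setminus \overline{\CalO_1}$, and since $\CalO_1^c$ is a null set the second piece is a null \emph{and} open subset of $\R^d$, hence empty, forcing $\CalO_1 \supseteq \CalO$, a contradiction. Thus we may pick $\xi_0 \in \partial \CalO_1 \cap \CalO$. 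The key idea is now to produce a single function that is integrable against the maximal operator attached to one frequency support but not the other. Choose $f \in C_c(\CalO)$ with $f \geq 0$ and $f(\xi_0) > 0$; this is possible since $\CalO$ is open and $\xi_0 \in \CalO$. Write $\CalO_1 = H^T C_1$ with $C_1 \subseteq \CalO_1$ compact. On the one hand, since $\xi_0 \in \partial \CalO_1$ and $f(\xi_0) \neq 0$, applying \Cref{lem:NonIntegrability} to the frequency support $\CalO_1 = H^T C_1$ gives $\int_H M_{C_1, H} f(h)\, d\mu_H(h) = \infty$. On the other hand, since $C_1 \subseteq \CalO_1 \subseteq \CalO$ is compact and $f \in C_c(\CalO)$ with $f \geq 0$, \Cref{lem:SpecialIntegrability} applied to the frequency support $\CalO$ (with compact set $K = C_1 \subseteq \CalO$) yields $\int_H M_{C_1, H} f(h)\, d\mu_H(h) < \infty$. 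These two conclusions are contradictory, so $\CalO_1 = \CalO$, and the theorem follows.

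The main subtlety I anticipate is the topological argument that $\partial \CalO_1 \cap \CalO \neq \emptyset$ when $\CalO_1 \subsetneq \CalO$; one must use both that $\CalO_1^c$ is a null set (so that $\CalO \setminus \overline{\CalO_1}$, being open and null, is empty) and that $\CalO$ is open and connected-free reasoning is avoided by the null-set argument. The rest is bookkeeping: checking that $f$ can be chosen with the required support and positivity, and verifying that the hypotheses of \Cref{lem:SpecialIntegrability} and \Cref{lem:NonIntegrability} are met verbatim with the chosen compact set $C_1$ playing the role of $K$ (resp.\ $C$). One should also note explicitly that $M_{C_1, H} f$ is the \emph{same} function in both applications — it depends only on $C_1$, $H$, and $f$, not on which frequency support is under consideration — which is precisely what makes the contradiction work.
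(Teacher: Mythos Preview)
Your proof is correct and follows essentially the same approach as the paper: reduce via \Cref{lem:DualOrbitUnion} to the nested case $\CalO_1 \subsetneq \CalO$, find a boundary point $\xi_0 \in \partial\CalO_1 \cap \CalO$, and play \Cref{lem:NonIntegrability} against \Cref{lem:SpecialIntegrability} for the same function $M_{C_1,H}f$. The only cosmetic difference is that your topological argument for $\partial\CalO_1 \cap \CalO \neq \emptyset$ is more elaborate than necessary---the paper simply picks any $\xi_0 \in \CalO \setminus \CalO_1$ and notes that density of $\CalO_1$ (being of full measure) forces $\xi_0 \in \overline{\CalO_1} \setminus \CalO_1 = \partial\CalO_1$.
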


\begin{proof}
  By \Cref{lem:DualOrbitUnion}, it suffices to show that there cannot
  be two frequency supports $\CalO_1,\CalO_2 \subseteq \R^d$
  satisfying $\CalO_1 \subsetneq \CalO_2$.
  
  Arguing towards a contradiction, assume that such frequency supports $\CalO_1$
  and $\CalO_2$ do exist.
  Write $\CalO_i = H^T C_i$ with $C_i \subseteq \CalO_i$ compact
  and choose $\xi_0 \in \CalO_2 \setminus \CalO_1$.
  Fix $f \in C_c (\CalO_2)$ satisfying $f \geq 0$ and $f(\xi_0) > 0$.
  Since $\CalO_1 \subseteq \R^d$ is open and of full measure with $\xi_0 \notin \CalO_1$,
  it follows that $\xi_0 \in \partial \CalO_1$.
  Hence, an application of \Cref{lem:NonIntegrability} implies
  \[
    \int_H
      M_{C_1, H} f (h)
    \, d \mu_H(h)
    = \infty .
  \]
  On the other hand, since $C_1 \subseteq \CalO_1 \subsetneq \CalO_2$ is compact,
  it follows by \Cref{lem:SpecialIntegrability}
  that
   \[
     \int_H
       M_{C_1, H} f (h)
     \, d \mu_H(h)
     < \infty ,
   \]
   which is the required contradiction.
\end{proof}

\section{Quasi-isometry between \texorpdfstring{$H \times C$}{H x C} and the essential frequency support}
\label{sec:ConnectivityRespectingGroups}

This section is devoted to establishing a quasi-isometry between the product $H \times C$ and $\CalO$,
where $H$ is an integrably admissible dilation group satisfying some mild additional conditions,
and $\CalO = H^T C$ its essential frequency support, where we assume that $C \subseteq \CalO$
is connected and compact (see \Cref{def:connectivity}).

More precisely, we will show that the \emph{full orbit map}
\begin{align}\label{eq:full_orbit_map}
  p : 
  H \times C \to \CalO, \quad
  (h, \xi) \mapsto h^{-T} \xi,
\end{align}
forms a quasi-isometry relative to suitable metrics on $H \times C$ and $\CalO$.
This property will be used in \Cref{sec:coorbit_equivalence}
to characterize when two dilation groups yield the same coorbit space.

The first few subsections of this section are concerned with the construction of relevant objects
and recalling the requisite background.
The fact that the full orbit map $p$ is a quasi-isometry
from $H \times C$ into the essential frequency support $\CalO$ of $H$
is proven in \Cref{sec:coarseequivalence_proof}.

\subsection{Quasi-isometries}
\label{sec:quasi-isometry}

This subsection reviews the notions of a quasi-isometry and a quasi-inverse.
As we need these notions and results in various contexts,
we introduce them in the general setting of metric spaces.
Standard references%
\footnote{The terminology is not uniform throughout the literature,
  see, e.g., \cite[Remark 3.A.4]{cornulier2016metric} for some overview.}
on coarse geometry are, e.g., \cite{roe2003lectures, nowak2012large, cornulier2016metric}, and we refer to these references for further details.

Throughout this subsection, we let $(X, d_X)$ and $(Y, d_Y)$ be arbitrary metric spaces. 
A map $f : X \to Y$ is said to be a \emph{quasi-isometry}
if it satisfies the following two conditions:
\begin{enumerate}[label=(q\arabic*)]
  \item \label{enu:QuasiIsometryCondition1}
        There exist constants $R_1 , R_2>0$ such that
        \[
          R_1^{-1} d_X (x,x') - R_2
          \leq d_Y \big(f(x), f(x') \big)
          \leq R_1 d_X (x, x') + R_2
        \]
        for all $x, x' \in X$;

  \item \label{enu:QuasiIsometryCondition2}
        There exists $R_3 > 0$ such that, for every $y \in Y$,
        there exists $x \in X$, such that $d_Y \big( f(x), y\big) \leq R_3$.
\end{enumerate}
The composition of two quasi-isometries is again a quasi-isometry.
Hence the existence of a quasi-isometry between $X $ and $Y$ defines a transitive relation
between metric spaces (and this relation is clearly also reflexive).
This relation is also symmetric, i.e., for every quasi-isometry from $X$ into $Y$
there exists an associated quasi-isometry in the converse direction.
This observation will be useful for us in the following,
and is therefore spelled out in somewhat more detail in the following paragraph. 

A map $f_2: Y \to X$ is called a \emph{quasi-inverse} of a map $f_1 : X \to Y$ if
\[
  \sup_{y \in Y}
    d_Y\bigl(f_1(f_2(y)), y\bigr)
  < \infty ~. 
\]
Every quasi-isometry $f_1$ has a quasi-inverse:
By assumption \ref{enu:QuasiIsometryCondition2},
the axiom of choice provides a map $f_2 : Y \to X$ satisfying
\[
  \sup_{y \in Y}
    d_Y \bigl(f_1(f_2(y)), y\bigr)
  \leq R_3
  .
\]

The following result is folklore.
For reasons of self-containment, we provide a proof in \Cref{sec:postponedproofs}. 

\begin{lemma} \label{lem:inv_coarse_isom} Let $(X, d_X)$ and $(Y, d_Y)$ be metric spaces.
\begin{enumerate}[label=(\roman*)]
\item Any quasi-inverse of a quasi-isometry $f_1:X \to Y$ is a quasi-isometry. 

\item Let $f_1 : X \to Y$ denote a quasi-isometry, and $f_2$ a quasi-inverse of $f_1$.
      Then $f_1$ is a quasi-inverse of $f_2$. 
\end{enumerate}
\end{lemma}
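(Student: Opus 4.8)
The plan is to prove both parts by unwinding the quasi-isometry definition and tracking constants carefully. Throughout I would fix a quasi-isometry $f_1 : X \to Y$ with constants $R_1, R_2 > 0$ as in \ref{enu:QuasiIsometryCondition1} and $R_3 > 0$ as in \ref{enu:QuasiIsometryCondition2}, and let $f_2 : Y \to X$ be a quasi-inverse, so that $d_Y(f_1(f_2(y)), y) \leq S$ for all $y \in Y$, for some finite constant $S \geq 0$ (we may take $S = R_3$ for the canonical choice, but the argument should work for any quasi-inverse).

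\textbf{Part (i): $f_2$ is a quasi-isometry.}
First I would verify the two-sided distance estimate \ref{enu:QuasiIsometryCondition1} for $f_2$. Given $y, y' \in Y$, apply the lower and upper bounds in \ref{enu:QuasiIsometryCondition1} to the pair $x = f_2(y)$, $x' = f_2(y')$ to relate $d_X(f_2(y), f_2(y'))$ and $d_Y(f_1(f_2(y)), f_1(f_2(y')))$; then use the triangle inequality together with $d_Y(f_1(f_2(y)), y) \leq S$ and $d_Y(f_1(f_2(y')), y') \leq S$ to replace $d_Y(f_1(f_2(y)), f_1(f_2(y')))$ by $d_Y(y, y')$ up to an additive error of $2S$. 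This yields
\[
  R_1^{-1} \, d_Y(y, y') - (R_2 + 2R_1^{-1} S)
  \leq d_X(f_2(y), f_2(y'))
  \leq R_1 \, d_Y(y, y') + (R_2 + 2 R_1 S),
\]
which is \ref{enu:QuasiIsometryCondition1} for $f_2$ with the same multiplicative constant $R_1$ and an enlarged additive constant. For the coarse surjectivity condition \ref{enu:QuasiIsometryCondition2}: given $x \in X$, set $y := f_1(x) \in Y$; then $f_2(y) = f_2(f_1(x))$, and I would estimate $d_X(f_2(f_1(x)), x)$ by feeding the pair $f_2(f_1(x))$ and $x$ into the lower bound of \ref{enu:QuasiIsometryCondition1} for $f_1$, obtaining $R_1^{-1} d_X(f_2(f_1(x)), x) - R_2 \leq d_Y(f_1(f_2(f_1(x))), f_1(x)) \leq S$, hence $d_X(f_2(f_1(x)), x) \leq R_1(R_2 + S) =: R_3'$. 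Thus every $x \in X$ is within $R_3'$ of the image of $f_2$, giving \ref{enu:QuasiIsometryCondition2}.

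\textbf{Part (ii): $f_1$ is a quasi-inverse of $f_2$.}
Here I need $\sup_{x \in X} d_X(f_2(f_1(x)), x) < \infty$. But this is exactly the bound $d_X(f_2(f_1(x)), x) \leq R_1(R_2 + S)$ derived at the end of Part (i), so Part (ii) is an immediate corollary once Part (i) is in place; I would simply reuse that estimate. I expect the only mild subtlety — hardly an obstacle — to be bookkeeping the roles of $d_X$ versus $d_Y$ (note the definition of quasi-inverse is stated with $d_Y$ on $Y$, so for "$f_1$ is a quasi-inverse of $f_2$" the relevant supremum is of $d_X$ over $X$, matching what we computed). No step requires anything beyond the triangle inequality and the defining inequalities, so there is no genuine hard part; care with additive constants is all that is needed.
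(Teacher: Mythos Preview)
Your proposal is correct and follows essentially the same route as the paper: apply the two-sided estimate \ref{enu:QuasiIsometryCondition1} for $f_1$ to the pair $f_2(y), f_2(y')$, use the triangle inequality with the quasi-inverse bound to pass from $d_Y(f_1(f_2(y)),f_1(f_2(y')))$ to $d_Y(y,y')$, and then derive the uniform bound on $d_X(f_2(f_1(x)),x)$ from the lower estimate in \ref{enu:QuasiIsometryCondition1}, which simultaneously gives \ref{enu:QuasiIsometryCondition2} for $f_2$ and part~(ii). Your explicit additive constants are slightly off (e.g., $R_2$ should be $R_1 R_2$ in the upper bound and $R_1^{-1} R_2$ in the lower bound), but this is cosmetic and does not affect the argument.
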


\subsection{Connectivity-respecting dilation groups}

Let $H \leq \mathrm{GL}(d, \R)$ be an integrably admissible dilation group with essential frequency support $\CalO = H^T C$.
In order to show the quasi-isometry property of the full orbit map $p : H \times C \to \CalO$ defined in \Cref{eq:full_orbit_map},
we will construct adequate metrics on $H \times C$ and $\CalO$ in the following subsections.
For these constructions, we will impose some mild additional assumptions on the group $H$ and its frequency support $\CalO$ in \Cref{def:connectivity}.

In order to prepare for \Cref{def:connectivity}, we make the observation that the dual action of $H$ on $\mathcal{O}$
induces a permutation action on the connected components of $\mathcal{O}$.
More precisely, let
\[
  \Gamma
  := \{ \CalO_0 \subseteq \CalO \,\,\colon\,\, \CalO_0 \text{ is a connected component of } \CalO \}
  .
\]
Then for $\CalO_0 \in \Gamma$ and $h \in H$, we see that $h^T \CalO_0 \subseteq \CalO$ is connected
and hence $h^T \CalO_0 \subseteq \CalO_0'$ for some $\CalO_0' \in \Gamma$.
Then $h^{-T} \CalO_0'$ is connected with $\CalO_0 \subseteq h^{-T} \CalO_0'$ and thus $\CalO_0 = h^{-T} \CalO_0'$,
meaning $h^{T} \CalO_0 = \CalO_0' \in \Gamma$, so that the map
\begin{equation}
  H \times \Gamma \to \Gamma, \quad
  (h, \CalO_0) \mapsto h^{-T} \CalO_0
  \label{eq:ActionOnComponents}
\end{equation}
is well-defined.
It is straightforward to check that this map is a group action.

\begin{lemma}\label{lem:H0AutomaticallyClosed}
  Let $H \leq \mathrm{GL}(d, \R)$ be an integrably admissible dilation group with essential frequency support $\CalO$.
  Then, given any connected component $\CalO_0$ of $\CalO$, the stabilizer $H_0$ of $\CalO_0$,
  \[
    H_0 := \{ h \in H \,\,\colon\,\, h^T \CalO_0 = \CalO_0 \} \subseteq H
  \]
   is a closed subgroup.
\end{lemma}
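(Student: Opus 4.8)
The plan is to exploit the fact that $H$ is integrably admissible to show that $H_0$ is the preimage of a closed set under a continuous map. First I would fix a point $\xi_0 \in \CalO_0$ and observe that $H$ acts on the set $\Gamma$ of connected components via the action \eqref{eq:ActionOnComponents}; the stabilizer of $\CalO_0$ under this action is exactly $H_0$. Since $H$ is a closed subgroup of $\GL(d,\R)$, it is locally compact and second countable, so it suffices to show $H_0$ is sequentially closed: given a sequence $(h_n)_{n \in \N}$ in $H_0$ with $h_n \to h \in H$, we must verify $h \in H_0$, i.e.\ $h^T \CalO_0 = \CalO_0$.

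The key step is the following. Pick any $\xi \in \CalO_0$. Since $\CalO_0$ is open (a connected component of the open set $\CalO$), there is a compact neighborhood $K \subseteq \CalO_0$ of $\xi$. Because $h_n^T \xi = h_n^T \xi \in h_n^T \CalO_0 = \CalO_0$ and $h_n^T \xi \to h^T \xi$, and because $\CalO = \CalO^\circ$ is open, we get $h^T \xi \in \CalO$, so $h^T \xi$ lies in \emph{some} connected component $\CalO_0'$ of $\CalO$. The claim is that $\CalO_0' = \CalO_0$. To see this I would use the compactness property \ref{enu:FrequencySupportProperness}: choose a compact connected neighborhood $L \subseteq \CalO_0$ containing both $\xi$ and $h^T \xi$ (possible since $\CalO_0$ is open and connected, hence path-connected as an open subset of $\R^d$, so one can fatten a path). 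Then for $n$ large, $h_n^T \xi$ is close to $h^T \xi$, hence lies in $L$; in particular $(h_n, \xi') \in [L \cup L]$-type sets become relevant, but more simply: the set $((L, L)) = \{ g \in H : g^T L \cap L \neq \emptyset \}$ is compact by \Cref{cor:basic_integrably}(ii). The points $h_n$ eventually satisfy $h_n^T \xi \in L$ and $\xi \in L$, hence $h_n \in ((L,L))$ for large $n$; passing to the limit, $h \in ((L,L))$, which is fine but not yet the conclusion.

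The actual mechanism I would use to pin down the component is continuity of the action together with connectedness of a suitable test set. Concretely: fix $\xi \in \CalO_0$ and a compact \emph{connected} neighborhood $K \subseteq \CalO_0$ of $\xi$. The map $H \to \R^d$, $g \mapsto g^T \xi$ is continuous, so for $n$ large $h_n^T \xi$ is as close as we like to $h^T \xi$; since each $h_n^T \xi \in \CalO_0$ and $\CalO_0$ is a connected component, it is both open and closed in $\CalO$, hence $h^T \xi = \lim_n h_n^T \xi$ lies in the closure of $\CalO_0$ taken inside $\CalO$, which equals $\CalO_0$. Thus $h^T \xi \in \CalO_0$. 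Now $h^T K$ is a connected subset of $\CalO$ containing the point $h^T \xi \in \CalO_0$, so $h^T K \subseteq \CalO_0$. Running the same argument with $h^{-1}$ in place of $h$ — noting $h_n^{-1} \to h^{-1}$ and each $h_n^{-1} \in H_0$ — gives $h^{-T} \CalO_0 \subseteq \CalO_0$, equivalently $\CalO_0 \subseteq h^T \CalO_0$. Combined with $h^T \CalO_0 \subseteq \CalO_0$ (obtained by covering $\CalO_0$ with connected compact neighborhoods of its points and applying the previous paragraph's conclusion pointwise, using that $h^T(\text{connected}) $ is connected), we conclude $h^T \CalO_0 = \CalO_0$, so $h \in H_0$.

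The main obstacle I anticipate is the step ``$h^T \xi \in \CalO_0$'': a priori the limit of points in $\CalO_0$ could land on $\partial \CalO_0$, which need not lie in $\CalO$ at all, or could land in a different component. The resolution is that $\CalO_0$, being a connected component of the \emph{open} set $\CalO$, is relatively closed in $\CalO$; since we already know $h^T \xi \in \CalO$ (as $\CalO$ is open and $h_n^T \xi \to h^T\xi$ with $h_n^T\xi \in \CalO$), relative closedness forces $h^T\xi \in \CalO_0$. This is the one place where openness of $\CalO$ (condition \ref{enu:FrequencySupportFullMeasure}) is genuinely used; the integrable admissibility is not strictly needed for this lemma beyond guaranteeing $\CalO$ is well-defined and open, though one could alternatively phrase the argument through compactness of $((K,K))$.
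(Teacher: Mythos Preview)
Your core argument is sound but contains one genuine slip. In the final paragraph you write: ``since we already know $h^T \xi \in \CalO$ (as $\CalO$ is open and $h_n^T \xi \to h^T\xi$ with $h_n^T\xi \in \CalO$).'' Openness of $\CalO$ does \emph{not} imply that limits of sequences in $\CalO$ remain in $\CalO$; this is precisely the obstacle you yourself flag two lines earlier. The correct justification is that $\CalO$ is $H^T$-invariant (since $\CalO = H^T C$ implies $h^T \CalO = h^T H^T C = H^T C = \CalO$), so $h^T \xi \in \CalO$ holds directly, without any limiting argument. With this fix your proof goes through.

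That said, the paper's proof is shorter and avoids the double-containment bookkeeping. Rather than showing $h^T \CalO_0 \subseteq \CalO_0$ pointwise and then repeating for $h^{-1}$, the paper uses the observation (already established just before the lemma, around \eqref{eq:ActionOnComponents}) that $\CalO_0' := h^T \CalO_0$ is itself a connected component of $\CalO$, hence open. Since $h_n^T \xi \to h^T \xi \in \CalO_0'$, for large $n$ one has $h_n^T \xi \in \CalO_0' \cap h_n^T \CalO_0 = \CalO_0' \cap \CalO_0$; two components with nonempty intersection coincide, so $h^T \CalO_0 = \CalO_0$ in one stroke. Your route trades this structural fact (permutation action on components) for a more hands-on connectedness argument, which is fine but longer. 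Neither proof actually needs the compactness property \ref{enu:FrequencySupportProperness} or the sets $((K,K))$ that appear in your first two exploratory paragraphs; only openness and $H^T$-invariance of $\CalO$ are used.
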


\begin{proof}
 Let $(h_n)_{n \in \N}$ be a sequence in $H_0$ with $h_n \to h$ for some $h \in H$.
  Fix $\xi \in \CalO_0$, and let $\CalO_0' := h^T \CalO_0$,
  and note that $\CalO_0'$ and $\CalO_0$ are both connected components of $\CalO$; 
  see the discussion around \Cref{eq:ActionOnComponents}.
  Then $\CalO_0'$ is a neighborhood of $h^T \xi$, and we have
  $h_n^T \xi \rightarrow h^T \xi$ as $n \to \infty$.
  As such, there exists $n_0 \in \N$ such that $h_n^T \xi \in \CalO_0'$ for all $n \geq n_0$,
  which then implies $h_n^T \xi \in h_n^T \CalO_0 \cap \CalO_0' = \CalO_0 \cap \CalO_0'$.
  Since the two components $\CalO_0$ and $\CalO_0'$ have a nontrivial intersection,
  it follows that $h^T \CalO_0 = \CalO_0' = \CalO_0$, and hence $h \in H_0$.
\end{proof}

The following definition introduces the additional assumptions
on an integrably admissible dilation group that will be needed in the remainder.

\begin{definition} \label{def:connectivity}
    An integrably admissible dilation group $H \leq \mathrm{GL}(d, \R)$ with essential frequency support $\CalO$ is called \emph{connectivity-respecting} if it satisfies the following additional assumptions:
    \begin{enumerate}
        \item[(c1)] There exists a compact, \emph{connected} set $C \subseteq \CalO$ satisfying $\CalO = H^T C$;
        
        \item[(c2)] The stabilizer group $H_0 := \{ h \in H : h^T \CalO_0 = \CalO_0 \}$
                    of the connected component $\CalO_0$ of $\CalO$ containing $C$ is compactly generated.
    \end{enumerate}
\end{definition}
In principle, condition (c2) is a nontrivial restriction, since there do exist closed matrix groups that are not compactly generated, see \cite[Appendix A]{Ross_cg}. However, we currently do not know of an example where such groups arise as stabilizer groups $H_0$ obtained in the sense of (c2) from an integrably admissible matrix groups.

The following simple observation will be used repeatedly.

\begin{lemma} \label{lem:stabilizer_compactlygenerated} 
    If $H \leq \mathrm{GL}(d, \R)$ is connectivity-respecting,
    then the stabilizer $H_0$ of each connected component $\CalO_0$ of $\CalO$ is compactly generated.
\end{lemma}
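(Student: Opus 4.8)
The plan is to reduce the statement to the combination of the definition of connectivity-respecting (in particular condition (c2)) and the transitivity of the $H$-action on the set $\Gamma$ of connected components of $\CalO$. Condition (c2) gives us that $H_0$, the stabilizer of the \emph{particular} component $\CalO_0$ containing $C$, is compactly generated. So the content of the lemma is that the stabilizer of \emph{any} other component is also compactly generated, and the natural way to get this is to show all these stabilizers are conjugate subgroups of $H$, then use that compact generation is preserved under conjugation.

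First I would fix an arbitrary connected component $\CalO_1 \in \Gamma$ and denote its stabilizer $H_1 := \{ h \in H : h^T \CalO_1 = \CalO_1 \}$, which is a closed subgroup by \Cref{lem:H0AutomaticallyClosed}. The key observation is that the $H$-action on $\Gamma$ from \eqref{eq:ActionOnComponents} is transitive: since $\CalO = H^T C$ and $C \subseteq \CalO_0$ is connected, every $\xi \in \CalO$ lies in $h^T C \subseteq h^T \CalO_0$ for some $h \in H$, so the component of $\CalO$ containing $\xi$ equals $h^T \CalO_0$ (using, as in the discussion around \eqref{eq:ActionOnComponents}, that $h^T$ maps components to components); hence every component of $\CalO$ is of the form $h^T \CalO_0 = h^{-T}$ applied appropriately — more carefully, writing the action as $(h,\CalO_0)\mapsto h^{-T}\CalO_0$, transitivity says: for every $\CalO_1 \in \Gamma$ there is $g \in H$ with $g^{-T}\CalO_0 = \CalO_1$, equivalently $g^T \CalO_1 = \CalO_0$.

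Next I would show $H_1 = g H_0 g^{-1}$ for such a $g$. Indeed, for $h \in H$ one has $h^T \CalO_1 = \CalO_1$ iff $g^T h^T \CalO_1 = g^T \CalO_1 = \CalO_0$ iff $g^T h^T g^{-T} \CalO_0 = \CalO_0$ iff $(g^{-1} h g)^T \CalO_0 = \CalO_0$ (using $g^T h^T g^{-T} = (g^{-1} h g)^T$) iff $g^{-1} h g \in H_0$, i.e.\ $h \in g H_0 g^{-1}$. Finally, conjugation by the fixed invertible matrix $g$ is a topological group automorphism of $H$, hence maps a compact generating set of $H_0$ to a compact generating set of $g H_0 g^{-1} = H_1$; therefore $H_1$ is compactly generated. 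Since $\CalO_1$ was arbitrary, this proves the lemma.

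I do not expect a serious obstacle here; the only point requiring a little care is the transitivity of the action on $\Gamma$, which must be extracted cleanly from condition (c1) (connectedness of $C$) together with the already-established fact that $h^T$ permutes the components — the discussion preceding \eqref{eq:ActionOnComponents} already does most of this work, so I would simply cite it. A minor bookkeeping matter is keeping the inverse transposes straight between the action convention $(h,\CalO_0)\mapsto h^{-T}\CalO_0$ and the stabilizer condition $h^T \CalO_0 = \CalO_0$, but since $H$ and hence $H_0$ is a group this causes no real trouble.
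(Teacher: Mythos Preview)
Your proposal is correct and follows essentially the same approach as the paper: both use transitivity of the $H$-action on $\Gamma$ (deduced from $\CalO = H^T C \subseteq H^T \CalO_0$) to conclude that all stabilizers are conjugate to $H_0$, and then invoke the fact that compact generation is preserved under conjugation. The paper's version is somewhat terser, citing the standard fact that stabilizers in a transitive action are conjugate rather than writing out the explicit computation $H_1 = g H_0 g^{-1}$ as you do.
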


\begin{proof}
    By the discussion around \Cref{eq:ActionOnComponents}, the group $H$ acts on the set $\Gamma$ of connected components of $\mathcal{O}$ by  $(h, \CalO_0) \mapsto h^{-T} \CalO_0$. Clearly, the set $H_0$ is the stabilizer group of $\mathcal{O}_0 \in \Gamma$ with respect to this action. 
    
    Furthermore, denoting by $C \subseteq \mathcal{O}_0$ the connected sets from (c1) of Definition \ref{def:connectivity}, the fact that $\mathcal{O} = H^T C = H^T \mathcal{O}_0$ implies that the action of $H$ on $\Gamma$ is transitive.  Hence, all fixed groups are conjugate to $H_0$, and if $H_0$ is compactly generated, the same applies to the remaining fixed groups. 
\end{proof}

We next discuss the connectivity-respecting property for the classes of integrably admissible dilation groups mentioned in \Cref{ex:dilations}.

\begin{example}[Connectivity-respecting dilation groups] \label{ex:connectivity_respecting} Let $H \leq \mathrm{GL}(d, \mathbb{R})$ be closed.
\begin{enumerate} 
\item If $H$ is integrably admissible and \emph{connected},
        then $H$ is connectivity-respecting if and only if $\CalO$ is connected.

        For this, first note that $\CalO = H^T C$ is connected whenever  $H$ and $C$ are connected.
        For the converse implication, the assumption that $H$ is integrably admissible furnishes a compact set $C_0 \subseteq \CalO$ with $\CalO = H^T C_0$. 
        Then, since $\CalO$ is connected, \Cref{lem:ConnectedSuperSet} yields
        a compact connected set $C \subseteq \CalO$ with $C \supseteq C_0$,
        so that $\CalO = H^T C_0 \subseteq H^T C \subseteq \CalO$.
        Since any connected locally compact group is compactly generated, all conditions are satisfied for $H_0 := H$.

        This observation shows that in particular all integrably admissible one-parameter groups
        in dimension $d>1$ are connectivity-respecting. 
        The frequency support associated to any of these groups
        is $\CalO = \mathbb{R}^d \setminus \{ 0 \}$, cf. \cite[Proposition 6.3]{FuehrVelthoven}.

  \item Assume that $H$ is irreducibly admissible, i.e., that $\CalO$ is a single orbit. Then condition (c1) of \Cref{def:connectivity} can be guaranteed by any singleton set $C = \{ \xi \} \subseteq\CalO$. The other condition is also fulfilled: 
        Let $H_1$ denote the connected component of the identity $\mathrm{id} \in H$ of $H$.  Then it is clear that any orbit stabilizer $H_0$ in the sense of \Cref{def:connectivity} contains $H_1$. By \cite[Remark 4]{fuhr1998continuous}, $H_1$ has finite index in $H$, and therefore in $H_0$. Being connected, $H_1$ is generated by any compact symmetric neighborhood of unity $V \subseteq H_1$, and then $H_0$ is generated by $V$ together with finitely many coset representatives of $H_0/H_1$.  

        Hence the following results are applicable to all irreducibly admissible groups, without imposing further technical conditions. This stands in contrast to the precursor paper \cite{CoarseGeometryPaper}, which uses the standing assumption that the dual (single point) stabilizers 
        \[
          H_\xi = \{ h \in H : h^T \xi = \xi \}~,
          \quad
          \xi \in \CalO,
        \]
        are contained in $H_1$. 
\end{enumerate}
\end{example}

\subsection{Word metrics on dilation groups}
\label{sec:WordMetric}

Let $H \leq \mathrm{GL}(d, \R)$ be a closed subgroup.
For a nonempty symmetric set $W \subseteq H$, define the map
$d_W : H \times H \to \mathbb{N}_0 \cup \{ \infty \}$ by
\begin{align} \label{eq:wordmetric}
  d_W(x,y)
  = \begin{cases}
      \inf \{ m \in \mathbb{N} : x^{-1} y \in W^m \}, & \text{if } x \neq y; \\
      0                                               & \text{if } x = y,
    \end{cases}
\end{align}
where we set $\inf \emptyset = \infty$.
It is well-known that $d_W$ is a metric on $H$
(with the exception that it can attain the value $\infty$)
and that $d_W$ is left-invariant,
i.e., $d_W(x,y) = d_W(zx, zy)$ for all $x,y, z \in H$;
see, e.g., \cite[Lemma 4.2]{CoarseGeometryPaper}.

In the remainder, 
we will choose $W$ to be an open, precompact symmetric generating set of a subgroup $H_0 \leq H$.
The next (elementary) lemma shows that the resulting metrics for different choices of $W$
are coarsely equivalent, so that the precise choice is immaterial for our purposes.
The proof is deferred to \Cref{sec:postponedproofs}.

\begin{lemma}\label{lem:DifferentWsAreCoarselyEquivalent}
  Let $H$ be a locally compact group, let $H_0 \leq H$ be closed, and let
  $W,V \subseteq H_0$ be nonempty, open, precompact, symmetric generating sets for $H_0$.

  Then the identity map $h \mapsto h$ is a quasi-isometry from $(H,d_W)$ onto $ (H,d_V)$.
\end{lemma}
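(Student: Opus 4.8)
The plan is to show that the identity map $\iota\colon (H,d_W) \to (H,d_V)$ satisfies the quasi-isometry conditions \ref{enu:QuasiIsometryCondition1} and \ref{enu:QuasiIsometryCondition2}. Condition \ref{enu:QuasiIsometryCondition2} is trivial here, since $\iota$ is surjective; we may take $R_3$ arbitrary. So the whole content is the two-sided comparison in \ref{enu:QuasiIsometryCondition1}, and by symmetry of the roles of $W$ and $V$ it suffices to prove one direction, namely that there are constants $R_1,R_2>0$ with $d_V(x,y) \le R_1\, d_W(x,y) + R_2$ for all $x,y \in H$. (Applying this with $W$ and $V$ interchanged gives $d_W \le R_1' d_V + R_2'$, which rearranges to the lower bound $R_1'^{-1} d_W - R_2'/R_1' \le d_V$, and then one takes the maximum of the constants obtained.)

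First I would record the reduction to elements of $H_0$. By left-invariance of the word metrics (cited from \cite[Lemma 4.2]{CoarseGeometryPaper}), $d_W(x,y) = d_W(e, x^{-1}y)$ and likewise for $V$; moreover $d_W(e,h)$ and $d_V(e,h)$ are both finite precisely when $h \in \langle W\rangle = \langle V\rangle = H_0$, and both are $+\infty$ otherwise. Hence the inequality $d_V \le R_1 d_W + R_2$ holds trivially (both sides infinite, or rather the RHS infinite) when $x^{-1}y \notin H_0$, and it remains to bound $d_V(e,h)$ by $d_W(e,h)$ for $h \in H_0$. The key step is the covering argument: since $W \subseteq H_0$ is open and $\{vW : v \in V\}$ covers the precompact set $\overline{V}$ (because $e \in W$), compactness of $\overline{V}$ yields finitely many $v_1,\dots,v_N \in V$ with $\overline{V} \subseteq \bigcup_{j=1}^N v_j W$, and in particular $V \subseteq V^{N_0}$ for... more precisely, each $w \in W$... wait, I need $W$ covered by powers of $V$, not the other way. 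Let me instead use that $\overline{W}$ is compact and contained in $H_0 = \bigcup_{m\in\N} V^m$, and that $V$ is open; the sets $(V^m)^\circ \supseteq V^m$ (actually $V^m$ is already open since $V$ is open) form an increasing open cover of the compact set $\overline{W}$, so $\overline{W} \subseteq V^{m_0}$ for some $m_0 \in \N$. Hence $W \subseteq V^{m_0}$, so $W^m \subseteq V^{m\, m_0}$ for every $m \in \N$.

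From $W^m \subseteq V^{m m_0}$ the bound follows immediately: if $h \in H_0$ with $h \ne e$ and $d_W(e,h) = m$, then $h \in W^m \subseteq V^{m m_0}$, whence $d_V(e,h) \le m\, m_0 = m_0 \cdot d_W(e,h)$. Combined with the trivial case $h = e$ this gives $d_V(e,h) \le m_0 \cdot d_W(e,h)$ for all $h \in H_0$, and via left-invariance $d_V(x,y) \le m_0\, d_W(x,y)$ for all $x,y$ with $x^{-1}y \in H_0$ — and for $x^{-1}y \notin H_0$ both sides equal $+\infty$, so the inequality $d_V \le m_0\, d_W$ holds unconditionally (with $R_1 = m_0$, $R_2 = 0$). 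Swapping $W$ and $V$ gives a constant $m_1$ with $d_W \le m_1\, d_V$, i.e., $m_1^{-1} d_W \le d_V$. Taking $R_1 := \max\{m_0, m_1\}$ and $R_2 := 1$ (or any positive number) then verifies \ref{enu:QuasiIsometryCondition1}, and \ref{enu:QuasiIsometryCondition2} holds with any $R_3 > 0$ since $\iota$ is onto.

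The only real subtlety — the "main obstacle," though it is mild — is making the covering/compactness step clean: one must use that $V$ is \emph{open} (so that $V^m$ is open and the $V^m$ genuinely form an open cover of $\overline{W}$) and that $\overline{W}$ is \emph{compact} (precompactness of $W$), so that a finite subcover, hence a single $V^{m_0}$, suffices; it is precisely the hypotheses "open, precompact, symmetric generating set" that are being consumed here, symmetrically in $W$ and $V$. Everything else (left-invariance, the infinite-distance case, assembling the final constants) is bookkeeping that can be dispatched in a sentence each.
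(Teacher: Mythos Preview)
Your proposal is correct and follows essentially the same approach as the paper: reduce to one direction by symmetry, use compactness of $\overline{W}$ together with the open cover $H_0 = \bigcup_m V^m$ to get $\overline{W} \subseteq V^{m_0}$, and conclude $d_V \le m_0\, d_W$. Aside from the brief false start (you initially set up the covering in the wrong direction before correcting yourself) and the harmless slip writing $(V^m)^\circ \supseteq V^m$, the argument matches the paper's proof almost line for line.
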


\subsection{Covers of the frequency support}

For showing that the full orbit map \eqref{eq:full_orbit_map} is a quasi-isometry,
we need to construct an adequate metric on the frequency support $\CalO$ of a dilation group $H$.
Following an early observation in \cite{feichtinger1985banach},
we will define such a metric by first constructing an adequate cover of $\CalO$.

We will construct covers of the essential frequency support that are induced
by the dual action of the dilation group.
General covers of this type were already constructed
in \cite[Section 4.1]{FuehrVelthoven}, but for the purpose of the present paper
it is essential to have covers consisting of open connected sets.
In order to show the existence of such covers,
we will use defining condition (c1) of \Cref{def:connectivity}.

For the construction of the induced cover $\CalQ$,
we need some basic properties of well-spread families, which we recall next.
A family $(h_i)_{i \in I}$ in $ H$ is called  \textit{uniformly discrete}
if there exists an open set $U \subseteq H$ containing the identity element
such that $h_i U \cap h_j U = \emptyset $ holds for all $i,j \in I$ with $i \not= j$.
It is called \textit{uniformly dense} or \emph{$V$-dense} if there exists a relatively compact set
$V \subseteq H$ such that $H = \bigcup_{i \in I} h_i V$.
A family is called \textit{well-spread} if it is the finite union
of uniformly discrete sets and is uniformly dense.

A fact that we will be using repeatedly is the following lemma; see \cite[Lemma 4.2]{FuehrVelthoven}.

\begin{lemma}\label{lem:uniform_intersection}
  Let $K_1, K_2 \subseteq \CalO$ be compact and let $(h_i)_{i \in I} \subseteq H$ be uniformly discrete.
  For $h \in H$, let
  \[
    I_h (K_1, K_2)
    := \{ i \in I : h^{-T} K_1 \cap h_i^{-T} K_2 \neq \emptyset \}.
  \]
  Then there exists $R>0$ such that $\# I_h (K_1, K_2) \leq R$ for all $h \in H$.
\end{lemma}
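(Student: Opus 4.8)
The plan is to reduce the claim to the compactness condition \ref{enu:FrequencySupportProperness} (equivalently, the relative compactness of sets of the form $((C_1, C_2))$ from \Cref{cor:basic_integrably}), combined with the fact that a uniformly discrete family meets a fixed compact set only in boundedly many indices. First I would observe that $i \in I_h(K_1, K_2)$ means $h^{-T} K_1 \cap h_i^{-T} K_2 \neq \emptyset$, which rewrites as $h_i^{T} h^{-T} K_1 \cap K_2 \neq \emptyset$, i.e., $(h^{-1} h_i)^{-T} K_1 \cap K_2 \neq \emptyset$ — hmm, let me be careful: $h^{-T} K_1 \cap h_i^{-T} K_2 \neq \emptyset$ iff there are $\xi_1 \in K_1$, $\xi_2 \in K_2$ with $h^{-T}\xi_1 = h_i^{-T}\xi_2$, iff $(h_i^{T} h^{-T}) \xi_1 = \xi_2$, iff $(h^{-1}h_i)^{-T}$ — equivalently $(h_i h^{-1})^{T}$ maps $\xi_1$ into $K_2$, so that $h_i h^{-1} \in ((K_1, K_2))$. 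Thus $i \in I_h(K_1,K_2)$ forces $h_i \in ((K_1,K_2)) \cdot h$.

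The key step is then: by \Cref{cor:basic_integrably}(ii), the set $L := ((K_1, K_2))$ is compact in $H$. Let $U \subseteq H$ be the open identity neighborhood witnessing uniform discreteness of $(h_i)_{i \in I}$, so the sets $h_i U$ are pairwise disjoint. If $i \in I_h(K_1,K_2)$, then $h_i \in L h$, hence $h_i U \subseteq L h U = L U h$. Now $L U$ is relatively compact (product of a compact and a relatively compact set), so $\overline{LU}$ is compact; translating, $\overline{LU} h$ is compact with the same Haar measure $\mu_H(\overline{LU})$ for every $h$, by left-invariance of $\mu_H$ (note $\mu_H$ here is a left Haar measure, and right translation scales it by the modular function — so I should instead argue via the open set $U$: pick $U$ relatively compact without loss of generality, so that $\mu_H(h_i U) = \mu_H(U) > 0$, and $\mu_H(\overline{LU}\, h)$ is finite but $h$-dependent). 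To get a uniform bound I would use the modular function $\Delta_H$: either restrict to a relatively compact $h$ (not available) or, better, note that $h_i U \subseteq L U h$ gives $U \subseteq h_i^{-1} L U h$, hence $h_i^{-1} \in LU h U^{-1}$; this is getting circular. The clean route: since the $h_i U$ are disjoint and all contained in $\overline{LU}\,h$, we get $\#I_h(K_1,K_2) \cdot \mu_H(U) = \mu_H\big(\bigcup_{i \in I_h} h_i U\big) \le \mu_H(\overline{LU}\, h) = \Delta_H(h)\, \mu_H(\overline{LU})$, which is not uniform. The fix is to replace right-translates by exploiting that $i \in I_h$ also gives $h_i \in L h$, so $h \in L^{-1} h_i$, and then for two indices $i, j \in I_h$ we get $h_i \in L h \subseteq L L^{-1} h_j$, i.e., $h_i \in (L L^{-1}) h_j$; fixing any $i_0 \in I_h$ (if $I_h = \emptyset$ there is nothing to prove), every $i \in I_h$ satisfies $h_i \in (LL^{-1}) h_{i_0}$, so $h_i U \subseteq (L L^{-1}) h_{i_0} U = (LL^{-1}) U' h_{i_0}$ where — no, the same modular issue. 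The genuinely correct argument, which is the standard one, is: $h_i U \cap h_j U = \emptyset$ for $i \neq j$, and all $h_i$ with $i \in I_h$ lie in the \emph{left} translate $(LL^{-1}) h_{i_0}$; so the sets $h_i U$ lie in $(LL^{-1}) h_{i_0} U$. Since $h_i^{-1} h_{i_0} \in (LL^{-1})^{-1} = L L^{-1}$ is bounded, write $h_i = h_{i_0} g_i$ with $g_i \in h_{i_0}^{-1}(LL^{-1}) h_{i_0}$ — still $h$-independent only after conjugation.

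I will therefore follow \cite{FuehrVelthoven} and argue as follows: the hard part — and the step I expect to be the main obstacle — is obtaining a bound \emph{uniform in $h$}, for which one should work with \emph{right}-uniformly-discrete families or equivalently observe that the conclusion only involves the mutual positions of the $h_i$. Concretely: if $i, j \in I_h(K_1,K_2)$ with $i \neq j$, then $h_i, h_j \in L h$, hence $h_j^{-1} h_i \in h^{-1} L^{-1} L h$. This need not be bounded. The resolution used in \cite[Lemma 4.2]{FuehrVelthoven} is to instead use that uniform discreteness can be taken with respect to a \emph{relatively compact} $U$ and to bound $\#I_h$ by a covering-number argument: $L h$ can be covered by $N$ left-translates $g_1 h U, \dots, g_N h U$ of $hU$ where $N$ depends only on $L$ and $U$ (since $Lh (hU)^{-1} = L h U^{-1} h^{-1}$ is — again $h$-dependent). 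Given these recurring difficulties, in the final writeup I would simply cite \cite[Lemma 4.2]{FuehrVelthoven} for the statement as the excerpt does, and for a self-contained argument invoke the following clean fact: a family $(h_i)$ is uniformly discrete iff $h_i^{-1} h_j \notin V$ for $i \neq j$ and some identity neighborhood $V$; then $i,j \in I_h$ forces $h_i^{-1}h_j$ to lie in a set of the form $K_1^{\text{-dependent}}$... The honest summary: the proof is the area-counting argument $\#I_h \cdot \mu_H(U) \le \mu_H(\text{bounded set})$ once one passes to a formulation where the bounded set is $h$-independent, and making that passage uniform is the crux; the remaining verifications (compactness of $((K_1,K_2))$, disjointness) are immediate from \Cref{cor:basic_integrably} and the definition of uniform discreteness. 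I would present it by first establishing $h_i \in ((K_1,K_2)) h$, then fixing $i_0 \in I_h$ to write every $h_i$ with $i \in I_h$ in the form $h_i = \ell h_{i_0}$ with $\ell \in ((K_1,K_2)) \cdot ((K_1,K_2))^{-1} =: L'$, a fixed compact set, and finally bounding $\#I_h$ by the maximal number of points of an $L'$-left-translate-disjoint family that fit in $L'$, i.e., by $\sup_{g} \#\{i : h_i \in L' g\}$, which is finite and $g$-independent because $(h_i)$ is uniformly discrete and $L'$ is compact (a standard packing bound, e.g.\ \cite[Lemma 3.7]{FuehrVelthoven}-type statement).
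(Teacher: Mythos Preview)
The paper does not prove this lemma; it simply cites \cite[Lemma~4.2]{FuehrVelthoven}. So there is no detailed argument in the paper to compare against. That said, your proposal contains a single concrete error that is the source of \emph{all} the modular-function difficulties you encountered, and once it is fixed the argument becomes a clean two-line packing bound.

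The error is in the transpose computation. From $h^{-T}\xi_1 = h_i^{-T}\xi_2$ you correctly obtain $(h_i^T h^{-T})\xi_1 = \xi_2$, but then misidentify $h_i^T h^{-T}$. In fact
\[
  h_i^T h^{-T} = (h^{-1} h_i)^T,
\]
not $(h_i h^{-1})^T$. Hence $h^{-1}h_i \in ((K_1,K_2)) =: L$, i.e.\ $h_i \in hL$, a \emph{left} translate of the compact set $L$---not the right translate $Lh$ you wrote. This single swap is what repeatedly forced the modular function into your estimates.

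With the correct containment $h_i \in hL$ the proof is immediate: choose the separating neighborhood $U$ relatively compact; then the disjoint sets $h_i U$ for $i \in I_h(K_1,K_2)$ all lie in $hLU$, and by \emph{left}-invariance of $\mu_H$,
\[
  \#I_h(K_1,K_2)\cdot \mu_H(U)
  \;\le\; \mu_H(hLU)
  \;=\; \mu_H(LU),
\]
which is finite (since $LU$ is relatively compact by \Cref{cor:basic_integrably}(ii)) and independent of $h$. Thus $R := \mu_H(LU)/\mu_H(U)$ works. No passage through an auxiliary index $i_0$, no $LL^{-1}$, and no appeal to an unproved ``standard packing bound'' is needed.
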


The following result shows that any connectivity-respecting dilation group
admits an adequate connected cover of its frequency support.
More precisely, the cover is connected, admissible, and structured,
in the sense of \cite[Definition~2.5]{VoigtlaenderEmbeddingsOfDecompositionSpaces}.

\begin{lemma}\label{lem:NicecoverExists}
  Let $H \leq \GL(d, \R)$ be connectivity-respecting and let $C \subseteq \CalO$
  be a connected, compact set such that $\CalO = H^T C$.

  There exists an open, connected, bounded set $Q \subseteq \CalO$
  with $\overline{Q} \subseteq \CalO$ and $Q \supseteq C$,
  an open set $P \subseteq Q$ with $\overline{P} \subseteq Q$,
  and a well-spread family $(h_i)_{i \in I} $ in $H$
  such that the family $\CalQ = (h_i^{-T} Q)_{i \in I}$ has the following properties:
  \begin{enumerate}[label=(\roman*)]
      \item $\CalO = \bigcup_{i \in I} h_i^{-T} P = \bigcup_{i \in I} h_i^{-T} Q$;

      \item $\sup_{i \in I} \# \{ j \in I : h_i^{-T}Q \cap h_j^{-T} Q \neq \emptyset \} < \infty$;

      \item \(
              \sup_{ (i,j) \in I \times I: h_i^{-T} Q \cap h_j^{-T} Q \not= \emptyset}
                \| (h_i^{-T})^{-1} h_j^{-T} \|
              < \infty
            \).
  \end{enumerate}
\end{lemma}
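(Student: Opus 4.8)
The plan is to build the three data $Q$, $P$, and $(h_i)_{i\in I}$ in two stages: first I produce the open connected set $Q$ and a nested set $P$ using the connectivity-respecting hypothesis, and then I extract the family $(h_i)_{i \in I}$ by discretizing the group $H$ (or, more precisely, a well-chosen family of coset representatives in $H$).

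First I would fix the connected compact set $C$ from (c1) of \Cref{def:connectivity} with $\CalO = H^T C$, and use the openness and full measure of $\CalO$ to enlarge $C$ to an open connected bounded set $Q$ with $C \subseteq Q$ and $\overline Q \subseteq \CalO$. Concretely, cover $C$ by finitely many open balls with closures in $\CalO$; their union together with $C$ need not be connected, so I would invoke \Cref{lem:ConnectedSuperSet} (the referenced auxiliary result guaranteeing a compact connected superset inside an open connected ambient set) applied to $\CalO_0$, the connected component of $\CalO$ containing $C$, to obtain a \emph{connected} open bounded $Q$ with $C \subseteq Q$, $\overline Q \subseteq \CalO_0 \subseteq \CalO$. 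Still using $\CalO_0$ open, I can then pick an open $P$ with $C \subseteq P$, $\overline P \subseteq Q$, by the same thickening-and-shrinking argument (e.g.\ $P = \{\xi \in Q : \dist(\xi, C) < \eps\}$ for small $\eps$); note $\CalO = H^T C \subseteq H^T P \subseteq H^T Q \subseteq \CalO$, which already forces equality of all three, giving property (i) once we restrict the dilations to a well-spread family.

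Next I would construct the family $(h_i)_{i\in I}$. By \Cref{lem:stabilizer_compactlygenerated}, the stabilizer $H_0$ of $\CalO_0$ is compactly generated, hence (being locally compact, $\sigma$-compact) admits a well-spread family $(h_i)_{i \in I_0}$ in $H_0$: take a relatively compact symmetric unit-neighborhood $W$ generating $H_0$, a maximal $W$-separated set, which is then uniformly discrete and $W^2$-dense; in fact $H_0 = \bigcup_i h_i W^2$. Because $H$ acts transitively on the (at most countably many, by $\sigma$-compactness) connected components $\Gamma$ of $\CalO$, fix coset representatives $(g_k)_{k \in K}$ of $H / H_0$ with $g_k^{-T}\CalO_0$ running over $\Gamma$, and set $I := I_0 \times K$, $h_{(i,k)} := g_k h_i$. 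One checks $(h_{(i,k)})$ is well-spread in $H$: uniform discreteness passes through the left-translates $g_k (\cdot)$ since these are finitely many on each component and the components are separated; uniform density follows from $H = \bigcup_k g_k H_0 = \bigcup_{i,k} g_k h_i W^2$. Then $\bigcup_{(i,k)} h_{(i,k)}^{-T} P = \bigcup_k g_k^{-T} \big(\bigcup_i h_i^{-T} P\big) = \bigcup_k g_k^{-T}\big(\bigcup_i h_i^{-T}(H_0^T C \cap \CalO_0)\big)$; since $h_i^{-T}$ ranges over a $W^2$-dense subset of $H_0$ and $H_0^T C = \CalO_0$ (because $C \subseteq \CalO_0$ and $H_0$ is exactly the component stabilizer, so $H_0^T C$ is an $H_0$-invariant subset of $\CalO_0$ hitting $C$ — here one needs that $H^T C = \CalO$ splits component-wise, which is where I'd be careful), one gets $\bigcup_i h_i^{-T} P = \CalO_0$ and hence the full union is $\CalO$; the same for $Q$, proving (i).

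For (ii) and (iii): both are uniform finiteness/boundedness statements over the set of pairs with $h_i^{-T} Q \cap h_j^{-T} Q \neq \emptyset$. Property (ii) is immediate from \Cref{lem:uniform_intersection} with $K_1 = K_2 = \overline Q$, once $(h_i)$ is split into finitely many uniformly discrete pieces (which is part of being well-spread). For (iii), $h_i^{-T} Q \cap h_j^{-T} Q \neq \emptyset$ means $h_j^{-1} h_i \in ((\overline Q, \overline Q))$ in the notation of \eqref{eq:SpecialSetDefinition}, which by \Cref{cor:basic_integrably}(ii) is a compact — hence bounded — subset of $H$; so $\|(h_i^{-T})^{-1} h_j^{-T}\| = \|h_i^T h_j^{-T}\| = \|(h_j^{-1} h_i)^{-T}\|$ is bounded since inversion and transposition are continuous on the compact set $((\overline Q, \overline Q))$. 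I expect the main obstacle to be the bookkeeping in the construction of $(h_i)_{i \in I}$ across the (possibly infinitely many) connected components — in particular verifying that $H^T C = \CalO$ really does descend to $H_0^T C = \CalO_0$ and that the translated separated families remain uniformly discrete in $H$ — rather than in properties (i)–(iii) themselves, which reduce cleanly to the already-cited lemmas.
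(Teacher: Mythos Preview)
Your treatment of (ii) and (iii) is correct and matches the paper's. The real gap is in (i). The claim ``$\CalO = H^T P$, giving property (i) once we restrict the dilations to a well-spread family'' does not follow: if $(h_i)$ is $V$-dense in $H$ (or in $H_0$) for some relatively compact $V$, then for $\xi = h^{-T} c \in \CalO$ one writes $h = h_i v$ with $v \in V$, whence $\xi = h_i^{-T}(v^{-T} c)$; to conclude $\xi \in h_i^{-T} P$ you need $V^{-T} C \subseteq P$. Your $P$ is merely a Euclidean $\eps$-thickening of $C$ and absorbs no group action. Nor can you repair this by shrinking the density neighborhood after the fact, since in your construction $W$ must contain a compact generating set of the possibly disconnected group $H_0$, and such a set may move $C$ well outside any prescribed $\eps$-neighborhood.

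The paper avoids this by building the group action into $P$ and $Q$ from the start: fix open precompact unit neighborhoods $V \subseteq U \subseteq H$ with $\overline{V} \subseteq U$ and $U$ connected, set $Q := U^{-T} B_\eps(C)$ and $P := V^{-T} B_{\eps/2}(C)$, and take any $V$-dense well-spread family $(h_i)$ directly in $H$ (not in $H_0$). Then $Q$ is automatically open and connected as the continuous image of the connected set $U \times B_\eps(C)$, one checks $\overline{P} \subseteq \overline{V}^{-T}\overline{B_{\eps/2}(C)} \subseteq Q$, and (i) is a one-line computation: $\bigcup_i h_i^{-T} P \supseteq (\bigcup_i h_i V)^{-T} C = H^{-T} C = \CalO$.

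Your detour through coset representatives of $H/H_0$ is therefore unnecessary. For the record, it can be made to work: the components of $\CalO$ are open and at most countably many, so $H/H_0$ is countable and hence discrete by a Baire category argument, making $H_0$ open in $H$; this settles both of the concerns you flag at the end (in particular $H_0^T C = \CalO_0$ holds, and the translated families are genuinely uniformly discrete). But all of that bookkeeping is avoided once $P$ and $Q$ carry the built-in $V^{-T}$ and $U^{-T}$ factors.
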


\begin{proof}
  Let $B_\eps (C) := \{ \eta \in \R^d \colon \mathrm{dist}(\eta, C) < \eps \}$,
  where $\eps > 0$ is chosen so small that $\overline{B_\eps (C)} \subseteq \CalO$,
  which is possible since $C \subseteq \CalO$ with $C$ compact and $\CalO$ open.
  \Cref{lem:NeighborhoodConnected} shows that $B_\eps (C)$ is connected.
  Let $U \subseteq H$ be an open, connected, precompact unit neighborhood,
  and let $V \subseteq U$ be an open unit neighborhood with $\overline{V} \subseteq U$.
  The sets
  \[
    Q
    := U^{-T} B_\eps (C)
    = \bigcup_{u \in U} u^{-T} B_\eps (C)
    \quad \text{and} \quad
    P
    := V^{-T} B_{\eps / 2} (C)
  \]
  are open as unions of open sets, and $Q$ is connected as the image of the connected set
  $U \times B_\eps (C)$ under the continuous map $(h,\xi) \mapsto h^{-T} \xi$.
  Furthermore, we have
  \[
    \overline{Q}
    \subseteq \overline{U}^{-T} \overline{\widetilde{C}}
    \subseteq H^T \CalO
    = H^T H^T C
    = H^T C
    = \CalO
    ,
  \]
  meaning that $\overline{Q} \subseteq \CalO$ is compact.
  Moreover, $\overline{P} \subseteq \overline{V}^{-T} \overline{B_{\eps/2} (C)} \subseteq U^{-T} B_\eps (C) = Q$.
  Lastly, let $(h_i)_{i \in I} $ be a well-spread family in $H$ that is $V$-dense,
  i.e., $H = \bigcup_{i \in I} h_i \, V$; see, 
  e.g., \cite[Lemma~3.3]{FuehrGroechenigSamplingTheorems} for the existence of such a sequence.

  We next verify that $\CalQ := (h_i^{-T} Q)_{i \in I}$ has the properties (i)--(iii).
  For (i), note that
  \[
    \CalO
    \supseteq \bigcup_{i \in I}
              h_i^{-T} Q
    \supseteq \bigcup_{i \in I}
              h_i^{-T} P
    = \bigcup_{i \in I}
        h_i^{-T} V^{-T} C^\ast
    = \left( \bigcup_{i \in I} h_i V \right)^{-T} C^\ast
    \supseteq H^{-T} C
    = \CalO
    .
  \]
  
  For properties (ii) and (iii), we consider the sets 
  \[
    I_i
    := \{ j \in I ~:~h_i^{-T}Q \cap h_j^{-T} Q \not= \emptyset \}, \quad i \in I. 
  \]
  An application of \Cref{lem:uniform_intersection} yields $R > 0$ such that
  \[
    \sup_{i \in I} \# I_i \leq R,
  \]
  which shows (ii).
  
  Lastly, the fact that $h_j^{-1} h_i \!\in\! ((\overline{Q}, \overline{Q}))$ whenever $j \!\in\! I_i$
  (equivalently $i \!\in\! I_j$), with $((\overline{Q}, \overline{Q})) \subseteq H$ being compact
  by \Cref{cor:basic_integrably}, gives because of
  $\| (h_i^{-T})^{-1} h_j^{-T} \| = \| h_i^T h_j^{-T} \| = \|h_j^{-1} h_i \|$ that
  \[ 
    \sup_{ (i,j) \in I \times I: h_i^{-T} Q \cap h_j^{-T} Q \not= \emptyset}
      \| (h_i^{-T})^{-1} h_j^{-T} \|
    < \infty.
  \]
  This completes the proof.
\end{proof}

A cover $(h_i^{-T} Q)_{i \in I}$ of $\CalO$ as constructed in \Cref{lem:NicecoverExists}
will be referred to as an \emph{induced cover}. 
A notion that is of crucial importance in the remainder of the paper is that
of the \emph{weak equivalence} of two such covers.
This is the condition that will actually be used to determine
whether two scales of coorbit spaces are distinct.
The following definition is \cite[Definition 3.3.1]{VoigtlaenderEmbeddingsOfDecompositionSpaces}.

\begin{definition}\label{def:weakequivalence}
  Two covers $(Q_i)_{i \in I}$ and $(P_j)_{j \in J}$ of $\CalO$
  are said to be \emph{weakly equivalent} if
  \begin{align}
    \sup_{i \in I} \# \{ j \in J : Q_i \cap P_j \neq \emptyset \}
    + \sup_{j \in J} \# \{ i \in I : Q_i \cap P_j \neq \emptyset \}
    < \infty.
  \end{align}
\end{definition}
In the next subsection, a characterization of weak equivalence
in terms of quasi-isometries will be given.

\subsection{Metrics induced by covers}
\label{sec:coverDistance}

Following \cite{feichtinger1985banach}, we define a metric on an open set $\CalO \subseteq \R^d$
by means of a cover $\CalQ = (Q_i)_{i \in I}$ of $\CalO$.

A sequence of sets $Q_{i_1}, ..., Q_{i_m} \in \CalQ$ is said to be a \emph{$\CalQ$-chain}
(of length $m$) from $x \in G$ to $y \in G$ if $x \in Q_{i_1}$,
$y \in Q_{i_m}$ and $Q_{i_k} \cap Q_{i_{k+1}} \neq \emptyset$ for all $k = 1, ..., m-1$.
We write $\CalQ_m(x,y)$ for the set of all $\CalQ$-chains of length $m$ from $x$ to $y$.
The \emph{$\CalQ$-chain distance} is the map
$d_{\CalQ} : \CalO \times \CalO \to \mathbb{N}_0 \cup \{\infty\}$ defined by
\[
  d_{\CalQ} (x,y) = 
  \begin{cases}
      \inf \{ m \in \N : \CalQ_m (x,y) \neq \emptyset \}, & \text{if } x \neq y, \\
      0,                                                  & \text{if } x = y;
  \end{cases}
\]
where we set $\inf \emptyset = \infty$.
The map $d_{\CalQ}$ defines a metric on $\CalO$
(with the exception that it can attain the value $\infty$).

The significance of a $\CalQ$-chain distance for our purposes is that it allows to characterize
the weak equivalence of two covers.
The following result is \cite[Theorem 3.22]{CoarseGeometryPaper}
(see also \cite[Proposition 2.7]{berge2022large}) stated for the special covers
considered in the present paper.

\begin{theorem}\label{thm:WeakEquivQuasiIso}
  Let $H_1, H_2$ be integrably admissible dilation groups with associated essential frequency
  supports $\CalO_1 = \CalO = \CalO_2$.
  Let $(h_i)_{i \in I} \subseteq H_1$ and $(g_j)_{j \in J} \subseteq H_2$ be well-spread,
  let $Q,P \subseteq \CalO$ be open, \emph{connected} sets
  with compact closures $\overline{Q},\overline{P} \subseteq \CalO$,
  and assume that the families $\CalQ = (h_i^{-T} Q)_{i \in I}$
  and $ \CalP = (g_j^{-T} P)_{j \in J}$ are covers of $\CalO$.
  Then the following statements are equivalent:
  \begin{enumerate}[label=(\roman*)]
     \item The covers $\mathcal{Q}$ and $\mathcal{P}$ are weakly equivalent.

     \item The map
           \(
             \mathrm{id}:
             (\mathcal{O}, d_{\mathcal{Q}}) \to (\mathcal{O}, d_{ \mathcal{P}}),\ 
             \xi \mapsto \xi 
           \)
           is a quasi-isometry.
  \end{enumerate}
\end{theorem}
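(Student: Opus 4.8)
The plan is to prove the equivalence of weak equivalence of the covers $\CalQ,\CalP$ and the quasi-isometry property of the identity map $\mathrm{id}\colon(\CalO,d_\CalQ)\to(\CalO,d_\CalP)$ by reducing directly to \cite[Theorem 3.22]{CoarseGeometryPaper}. That general result characterizes weak equivalence of two covers in terms of the identity being a quasi-isometry between the two chain metrics, but it requires the covers to satisfy certain admissibility/structuredness hypotheses (each cover should consist of connected sets, be of finite height, and have uniformly controlled "overlap geometry"). So the substance of the proof is to verify that the special covers $\CalQ = (h_i^{-T}Q)_{i\in I}$ and $\CalP = (g_j^{-T}P)_{j\in J}$ fall within the scope of that theorem, and then simply to quote it.

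First I would record the elementary structural facts. Since $Q,P$ are open, connected, with compact closure inside $\CalO$, each $h_i^{-T}Q$ and each $g_j^{-T}P$ is open and connected, and the families cover $\CalO$ by hypothesis. Next I would invoke \Cref{lem:uniform_intersection}: because $(h_i)_{i\in I}$ is well-spread (hence a finite union of uniformly discrete families) and $\overline{Q}\subseteq\CalO$ is compact, the self-overlap count $\sup_{i\in I}\#\{j\in I: h_i^{-T}Q\cap h_j^{-T}Q\neq\emptyset\}$ is finite; likewise for $(g_j)_{j\in J}$ and $P$; and the cross-overlap counts $\sup_{i\in I}\#\{j\in J: h_i^{-T}Q\cap g_j^{-T}P\neq\emptyset\}$ and its transpose are finite as well — this last pair being exactly the quantity appearing in \Cref{def:weakequivalence}, but we only need its finiteness to recognize that the covers are "admissible relative to each other" in the sense required by \cite[Theorem 3.22]{CoarseGeometryPaper}, not its finiteness as a conclusion. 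I would also note, as in the proof of \Cref{lem:NicecoverExists}(iii), that whenever two members of $\CalQ$ (or of $\CalP$, or one of each) intersect, the associated transition matrices $h_i^T h_j^{-T}$ (resp.\ involving the $g_j$) lie in a fixed compact subset of $\GL(d,\R)$, by \Cref{cor:basic_integrably} applied to the compact sets $\overline{Q}$, $\overline{P}$; this gives the uniform geometric control that \cite[Theorem 3.22]{CoarseGeometryPaper} uses. These verifications are routine but must be stated because the cited theorem is phrased for general structured covers.

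Once these hypotheses are checked, I would simply apply \cite[Theorem 3.22]{CoarseGeometryPaper} to the pair $(\CalQ,\CalP)$: it yields precisely that $\CalQ$ and $\CalP$ are weakly equivalent if and only if $\mathrm{id}\colon(\CalO,d_\CalQ)\to(\CalO,d_\CalP)$ is a quasi-isometry, which is the claimed equivalence (i)$\Leftrightarrow$(ii). If one wants to avoid relying on the symmetry of the quasi-isometry relation being built into the cited theorem, \Cref{lem:inv_coarse_isom} guarantees that a quasi-isometry in one direction produces one in the other, so the formulation "$\mathrm{id}$ is a quasi-isometry" is unambiguous regardless of orientation.

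The main obstacle I anticipate is purely bookkeeping: matching the precise list of hypotheses in \cite[Definition 2.5]{VoigtlaenderEmbeddingsOfDecompositionSpaces} / the hypotheses of \cite[Theorem 3.22]{CoarseGeometryPaper} — connectedness of cover elements, finite height, the "almost subordinate" or geometric-compatibility conditions — against what the well-spread hypothesis plus compactness of $\overline{Q},\overline{P}$ actually deliver via \Cref{lem:uniform_intersection} and \Cref{cor:basic_integrably}. There is no genuine analytic difficulty here; the connectedness of $Q$ and $P$ (which is why that hypothesis is explicitly imposed in the statement) is exactly what makes the chain metrics $d_\CalQ,d_\CalP$ finite-valued and the cited theorem applicable, so the proof is essentially a verification-and-citation argument.
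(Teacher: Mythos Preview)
Your proposal is correct and takes essentially the same approach as the paper: the paper does not give a proof at all but simply records the statement as a specialization of \cite[Theorem 3.22]{CoarseGeometryPaper} (and \cite[Proposition 2.7]{berge2022large}) to the induced covers under consideration. Your added verification that $\CalQ$ and $\CalP$ satisfy the admissibility and connectedness hypotheses needed to invoke that theorem is more explicit than the paper, but the argument is the same citation-and-specialization.
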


It will often be useful for us to work with a conveniently chosen $\CalP$
instead of a fixed given cover $\CalQ$.
The following lemma shows that we can do this essentially without loss of generality.

\begin{lemma}\label{lem:coverChoiceDoesNotMatter}
  Let $Q,P \subseteq \CalO$ be open, \emph{connected} and bounded
  with $\overline{Q}, \overline{P} \subseteq \CalO$
  and let $(h_i)_{i \in I}$ and $(g_j)_{j \in J}$ be well-spread in $H$.
  Define $\CalQ = (h_i^{-T} Q)_{i \in I}$ and $\CalP = (g_j^{-T} P)_{j \in J}$
  and assume that $\CalQ, \CalP$ both cover $\CalO$.

  Then the identity map
  \[
    \mathrm{id} : 
    (\CalO, d_\CalQ) \to (\CalO, d_\CalP), \quad
    \xi  \mapsto \xi 
  \]
  is a quasi-isometry.
\end{lemma}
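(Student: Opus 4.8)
The plan is to reduce the statement of \Cref{lem:coverChoiceDoesNotMatter} to \Cref{thm:WeakEquivQuasiIso} by exhibiting a single auxiliary cover that is weakly equivalent to both $\CalQ$ and $\CalP$. Concretely, I would first invoke \Cref{lem:NicecoverExists} (applied with $H_1 = H_2 = H$ and using that $H$ is connectivity-respecting, with the connected compact generating set $C \subseteq \CalO$ fixed once and for all) to obtain a distinguished \emph{induced cover} $\CalR = (k_\ell^{-T} R)_{\ell \in L}$, with $R \subseteq \CalO$ open, connected and bounded, $\overline{R} \subseteq \CalO$, and $(k_\ell)_{\ell \in L}$ well-spread. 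The point of $\CalR$ is only that it satisfies the hypotheses of \Cref{thm:WeakEquivQuasiIso}; any such reference cover works.

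\textbf{Key steps.} First I would prove the auxiliary claim that \emph{any} two covers of $\CalO$ of the form $(h_i^{-T} Q)_{i \in I}$ and $(g_j^{-T} P)_{j \in J}$, with $Q, P \subseteq \CalO$ open, connected, bounded, $\overline{Q}, \overline{P} \subseteq \CalO$, and $(h_i)_{i \in I}, (g_j)_{j \in J}$ well-spread, are automatically weakly equivalent. This is the substance of the lemma. To see $\sup_{i \in I} \#\{ j \in J : h_i^{-T} Q \cap g_j^{-T} P \neq \emptyset \} < \infty$, observe that the well-spread family $(g_j)_{j \in J}$ is a finite union of uniformly discrete families $(g_j)_{j \in J_r}$, $r = 1, \dots, N$; for each such subfamily, $h_i^{-T} Q \cap g_j^{-T} P \neq \emptyset$ forces $j \in I_{h_i}(\overline{Q}, \overline{P})$ in the notation of \Cref{lem:uniform_intersection} (with $K_1 = \overline{Q}$, $K_2 = \overline{P}$), which that lemma bounds uniformly in $h_i$, hence uniformly in $i$; summing over the $N$ subfamilies gives the bound. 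The symmetric estimate $\sup_{j \in J} \#\{ i \in I : \dots \neq \emptyset \} < \infty$ follows identically after interchanging the roles of the two covers and using that $(h_i)_{i \in I}$ is also a finite union of uniformly discrete families. This establishes weak equivalence of $\CalQ$ and $\CalP$ directly — in fact it shows any such pair of covers is weakly equivalent, so $\CalR$ is not strictly needed, but keeping it does no harm.

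Then I would apply \Cref{thm:WeakEquivQuasiIso} with $H_1 = H_2 = H$, $\CalO_1 = \CalO = \CalO_2$, the well-spread families $(h_i)_{i \in I}$ and $(g_j)_{j \in J}$, and the open connected bounded sets $Q$ and $P$ (whose closures lie in $\CalO$ by hypothesis). Having just verified condition (i) of that theorem — weak equivalence of $\CalQ$ and $\CalP$ — we conclude condition (ii): the identity map $\mathrm{id} : (\CalO, d_\CalQ) \to (\CalO, d_\CalP)$ is a quasi-isometry, which is exactly the assertion.

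\textbf{Main obstacle.} The only genuinely non-formal point is the uniform finiteness of the overlap counts, and this is precisely what \Cref{lem:uniform_intersection} is designed to supply — the extra care needed is merely the bookkeeping of splitting each well-spread family into its uniformly discrete pieces before applying that lemma, and checking that a nonempty intersection $h^{-T} Q \cap g_j^{-T} P \neq \emptyset$ entails $h^{-T} \overline{Q} \cap g_j^{-T} \overline{P} \neq \emptyset$, i.e., $j \in I_h(\overline Q, \overline P)$. So in effect there is no serious obstacle: the lemma is a direct corollary of \Cref{lem:uniform_intersection} and \Cref{thm:WeakEquivQuasiIso}.
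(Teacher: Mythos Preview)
Your proposal is correct and follows essentially the same route as the paper: establish weak equivalence of $\CalQ$ and $\CalP$ via \Cref{lem:uniform_intersection}, then invoke \Cref{thm:WeakEquivQuasiIso}. The auxiliary cover $\CalR$ is indeed superfluous (as you note yourself); the paper dispenses with it entirely. Your explicit splitting of each well-spread family into finitely many uniformly discrete subfamilies before applying \Cref{lem:uniform_intersection} is a point of care the paper glosses over, but it is the right thing to do given that the lemma is stated only for uniformly discrete families.
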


\begin{proof}
  By \Cref{thm:WeakEquivQuasiIso}, it is enough to show that the covers
  $\CalQ,\CalP$ are weakly equivalent.
  To see this, note that \Cref{lem:uniform_intersection} yields a constant $R < \infty$
  such that for each $j \in J$, we have
  \[
    \# \big\{ i \in I \,\,\colon\,\, h_i^{-T}Q \cap g_j^{-T} P \neq \emptyset \big\}
    \leq \# \big\{
              i \in I
              \,\,\colon\,\,
              g_j^{-T} \overline{P} \cap h_i^{-T} \overline{Q} \neq \emptyset
            \big\}
    = \# I_{g_j} (\overline{P}, \overline{Q})
    \leq R
    .
  \]
  By symmetry, this easily shows that $\CalQ, \CalP$ are weakly equivalent.
\end{proof}

\subsection{Orbit maps}
\label{sec:coarseequivalence_proof}

In this subsection, we will prove the main result of this section,
namely the quasi-isometry property of the full orbit $p$ defined in \Cref{eq:full_orbit_map}.
Throughout, we fix the following notation:

\begin{notation}\label{notation}
Let $H \leq \mathrm{GL}(d, \R)$ be a connectivity-respecting dilation group
with essential frequency support $\CalO$.
    \begin{enumerate}[label=(A\arabic*)]
    \item \label{enu:NotationFrequencySupport}
         The set $C \subseteq \CalO$ is compact, \emph{connected} such that $\CalO = H^T C$,
         and $W \subseteq H_0$ is a nonempty, open, precompact,
         symmetric generating set for the stabilizer
         \[
           H_0 = \{ h \in H \,\,:\,\, h^T \CalO_0 = \CalO_0 \}
         \]
         of the connected component $\CalO_0$ of $\CalO$ containing $C$.

    \item \label{enu:Notationcover}
          The set $Q \subseteq \CalO$ is an open, bounded, \emph{connected} set satisfying $C \subseteq Q$ and 
          $\overline{Q} \subseteq \CalO$, and $(h_i)_{i \in I}$ is a well-spread family in $H$
          for which $\CalQ = (h_i^{-T} Q)_{i \in I}$
          is an induced cover (cf.\ \Cref{lem:NicecoverExists}).

    \item The metric on $H \times C$ is
          \[
            d_{H \times C} \bigl( (h,\xi), (g,\eta) \bigr)
            := d_W (h, g) + d_C (\xi, \eta)
            ,
          \]
          where $d_W$ is the word metric on $H$ determined by the set $W$  and $d_C$ is a bounded metric on $C$
          (e.g., the usual Euclidean metric).

    \item The metric on $\CalO$ is the cover-induced metric $d_{\CalQ}$
          associated to the cover $\CalQ$.
\end{enumerate}
\end{notation}

\begin{remark}
In (A1), the assumption that $W \subseteq H_0$ is a generating set for the stabilizer $H_0$ of $H$ differs slightly from the assumption in \cite{CoarseGeometryPaper}, where $W$ is assumed to be a unit neighborhood for the connected component of $H$. This subtle difference is ultimately responsible for the fact that we can establish a quasi-isometry between dilation group and the set $\mathcal{O}$ for our setting (see Corollary \ref{cor:orbitmap_quasi} below), whereas the analogous statement in the setting of \cite{CoarseGeometryPaper} requires an additional, somewhat artificial assumption; see Corollary 4.9 therein.
\end{remark}

We start with the following proposition, which is a crucial ingredient in establishing
one of the estimates in the definition of a quasi-isometry.

\begin{proposition}\label{prop:ConnectedSetInverseImage}
  If $h_1, h_2 \in H$ are such that $h_1^{-T} \CalO_0 \cap h_2^{-T} \CalO_0 \neq \emptyset $, then $h_2^{-1} h_1 \in H_0$.
  In particular, we have $H_\xi := \{ h \in H \colon h^{-T} \xi = \xi \} \subseteq H_0$ for all $\xi \in \CalO_0$.
\end{proposition}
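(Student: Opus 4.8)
The plan is to exploit the fact, established in the discussion around \Cref{eq:ActionOnComponents}, that the dual action of $H$ permutes the connected components of $\CalO$; hence each $h^{-T}\CalO_0$ is again a connected component of $\CalO$. First I would observe that $\CalO_0$ is both open and closed in $\CalO$ (being a connected component of the open, hence locally connected, set $\CalO \subseteq \R^d$), and the same is true of $h_1^{-T}\CalO_0$ and $h_2^{-T}\CalO_0$. Thus these are two connected components of $\CalO$ with nonempty intersection, which forces $h_1^{-T}\CalO_0 = h_2^{-T}\CalO_0$. Applying $h_2^T$ on the left then gives $(h_2^{-1}h_1)^{-T}\CalO_0 = \CalO_0$, i.e.\ $(h_2^{-1}h_1)^{T}\CalO_0 = \CalO_0$, which is exactly the statement that $h_2^{-1}h_1 \in H_0$.

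For the "in particular" clause, I would specialize: if $\xi \in \CalO_0$ and $h \in H_\xi$, meaning $h^{-T}\xi = \xi$, then $\xi \in h^{-T}\CalO_0 \cap \CalO_0$ (using $\xi \in \CalO_0$ and $\xi = h^{-T}\xi \in h^{-T}\CalO_0$), so the intersection is nonempty. Taking $h_1 = h$ and $h_2 = \mathrm{id}$ in the first part then yields $h = \mathrm{id}^{-1} h \in H_0$, as desired.

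I expect no serious obstacle here; the only point requiring a little care is the topological claim that distinct connected components of an open subset of $\R^d$ are disjoint open sets, so that two components with nonempty intersection must coincide. This is standard (connected components of a locally connected space are open), but it is worth stating explicitly since it is the engine of the argument, and it is precisely where the hypothesis "$h_1^{-T}\CalO_0 \cap h_2^{-T}\CalO_0 \neq \emptyset$" is consumed.
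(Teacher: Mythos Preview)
Your proposal is correct and follows essentially the same approach as the paper: both arguments use that the dual action permutes the connected components of $\CalO$ (as established around \Cref{eq:ActionOnComponents}), so two such components with nonempty intersection must coincide, and then unwind this to $(h_2^{-1}h_1)^T \CalO_0 = \CalO_0$. The only cosmetic difference is that the paper first applies $h_1^T$ to reduce to $\CalO_0 \cap (h_2^{-1}h_1)^T \CalO_0 \neq \emptyset$ before invoking the component argument, whereas you apply the component argument directly and then multiply by $h_2^T$; the ``in particular'' clause is handled identically.
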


\begin{proof}
  Let $h_1, h_2 \in H$ with $h_1^{-T} \CalO_0 \cap h_2^{-T} \CalO_0 \neq \emptyset$,
  which implies that
  \[ \CalO_0 \cap h_1^T h_2^{-T} \CalO_0
    = \CalO_0 \cap (h_2^{-1} h_1)^{T} \CalO_0
  \]
  is nonempty.
  As noted in the discussion around \Cref{eq:ActionOnComponents},
  both $\CalO_0$ and $(h_2^{-1} h_1)^T \CalO_0$ are connected components of $\CalO$.
  Hence, $(h_2^{-1} h_1)^T \CalO_0 = \CalO_0$, so that $h_2^{-1} h_1 \in H_0$.

  For the remaining part, note that if $\xi \in \CalO_0$ and $h \in H_\xi$, then
  \[
    \xi
    \in h^{-T} \{ \xi \} \cap \mathrm{id}^{-T} \{ \xi \}
    \subseteq h^{-T} \CalO_0 \cap \mathrm{id}^{-T} \CalO_0
    ,
  \]
  so that the first part shows $h = \mathrm{id}^{-1} h \in H_0$.
  Since this holds for all $h \in H_\xi$, it follows that $H_\xi \subseteq H_0$, as required.
\end{proof}

Using \Cref{prop:ConnectedSetInverseImage}, we can now show the first of the two estimates
that are needed to prove that the full orbit map forms a quasi-isometry.

\begin{theorem}\label{thm:PXiIsQuasiExpansive}
 With notation as in \Cref{notation}, define
  \[
    p_\xi : 
    H \to \CalO, \quad
    h \mapsto h^{-T} \xi,
  \]
  for $\xi \in \CalO$.
  Then there exists $R \in \N$ such that
  \begin{align} \label{eq:estimate1}
    d_W (g,h)
    \leq R  \cdot d_{\CalQ} \bigl(p_\xi (g), p_\eta (h)\bigr) + R
  \end{align}
  for all $g, h \in H$ and $\xi, \eta \in Q$.
\end{theorem}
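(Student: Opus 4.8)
The goal is to bound the word distance $d_W(g,h)$ in $H_0$ (well, in $H$, but effectively in cosets of $H_0$) from above by a multiple of the chain distance between the points $g^{-T}\xi$ and $h^{-T}\eta$ in $\CalO$. The natural strategy is to take a geodesic $\CalQ$-chain realizing $d_{\CalQ}(p_\xi(g), p_\eta(h)) =: m$, say $h_{i_1}^{-T}Q, \dots, h_{i_m}^{-T}Q$ with $g^{-T}\xi \in h_{i_1}^{-T}Q$ and $h^{-T}\eta \in h_{i_m}^{-T}Q$, and consecutive sets overlapping. Each overlap $h_{i_k}^{-T}Q \cap h_{i_{k+1}}^{-T}Q \neq \emptyset$ gives, by property (iii) of \Cref{lem:NicecoverExists}, a uniform bound $\|h_{i_k}^T h_{i_{k+1}}^{-T}\| \leq \Lambda$, equivalently $\|h_{i_{k+1}}^{-1} h_{i_k}\| \leq \Lambda$. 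Moreover, since $C \subseteq Q$ is inside the component $\CalO_0$ and $Q$ can be arranged to lie in $\CalO_0$ (or at least its intersections with other translates force the relevant group elements into $H_0$), the overlap condition forces $h_{i_{k+1}}^{-1} h_{i_k} \in H_0$ by \Cref{prop:ConnectedSetInverseImage}: indeed $h_{i_k}^{-T}\CalO_0 \cap h_{i_{k+1}}^{-T}\CalO_0 \supseteq h_{i_k}^{-T}Q \cap h_{i_{k+1}}^{-T}Q \neq\emptyset$ (using $Q \subseteq \CalO_0$, which holds because $Q$ is connected, meets $C\subseteq\CalO_0$, and sits inside $\CalO$).

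First I would record that the set $W' := \{ a \in H_0 : \|a\| \leq \Lambda \} \cap H_0$, being a closed, bounded (hence compact) subset of the group $H_0$, is contained in $(W')^{N} \subseteq W^{N'}$ for some fixed $N'$ — here I use that $W$ is an \emph{open} precompact symmetric generating set for $H_0$, so that $H_0 = \bigcup_{n} W^n$ and by compactness the compact set $W'$ is covered by finitely many $W^n$, hence contained in a single $W^{N'}$ (using $W \subseteq W^2 \subseteq \cdots$ up to replacing $W$ by $W \cup \{\mathrm{id}\}$, which changes $d_W$ only by a bounded amount). Consequently each consecutive step satisfies $d_W(h_{i_{k+1}}^{-1}h_{i_k}\text{-translation}) \leq N'$, more precisely $d_W(h_{i_k}, h_{i_{k+1}}) = d_W(\mathrm{id}, h_{i_k}^{-1}h_{i_{k+1}}) \leq N'$ by left-invariance. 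Chaining these via the triangle inequality over the $m-1$ steps gives $d_W(h_{i_1}, h_{i_m}) \leq (m-1) N'$.

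It remains to pass from $h_{i_1}, h_{i_m}$ to $g, h$ themselves. Since $g^{-T}\xi \in h_{i_1}^{-T}Q$ with $\xi \in Q$, we get $h_{i_1}^T g^{-T}\xi \in Q$ and $\xi \in Q$, so $h_{i_1}^{-1}g \in ((Q,Q)) \subseteq ((\overline Q,\overline Q))$, which is compact by \Cref{cor:basic_integrably}; also this element lies in $H_0$ by \Cref{prop:ConnectedSetInverseImage} (as $g^{-T}\CalO_0 \cap h_{i_1}^{-T}\CalO_0 \ni g^{-T}\xi$). Hence $h_{i_1}^{-1}g \in ((\overline Q,\overline Q)) \cap H_0$, a compact subset of $H_0$, so $d_W(g, h_{i_1}) = d_W(\mathrm{id}, h_{i_1}^{-1}g) \leq N''$ for a fixed $N''$, by the same covering argument as above; and likewise $d_W(h, h_{i_m}) \leq N''$. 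Combining, $d_W(g,h) \leq d_W(g,h_{i_1}) + d_W(h_{i_1},h_{i_m}) + d_W(h_{i_m},h) \leq 2N'' + (m-1)N' \leq R\, d_{\CalQ}(p_\xi(g),p_\eta(h)) + R$ with $R := \max\{N', 2N''\}$ (handling the degenerate case $m \in \{0,1\}$, i.e.\ $p_\xi(g) = p_\eta(h)$ or the two points share a cover set, separately but trivially, since then $g^{-1}h \in ((\overline Q,\overline Q)) \cap H_0$ directly).

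\textbf{Main obstacle.} The one point requiring care is the interplay between the connectivity bookkeeping and the group-theoretic estimates: I must make sure $Q$ (and all the relevant intersections) stay inside the single component $\CalO_0$ so that \Cref{prop:ConnectedSetInverseImage} applies and forces every group element appearing (the $h_{i_k}^{-1}h_{i_{k+1}}$ and $h_{i_1}^{-1}g$) into $H_0$ — this is essential because $d_W$ is only a genuine (finite-valued, meaningful) metric on cosets of $H_0$, and the word metric of an element not in $H_0$ would be $\infty$. Since $C$ is connected and meets $\CalO_0$ by definition, and $Q$ is a connected superset of $C$ contained in $\CalO$, one indeed gets $Q \subseteq \CalO_0$; the chain stays within translates of $\CalO_0$ by the same argument inductively. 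The second, more routine obstacle is the "compact subsets of $H_0$ lie in a single $W^n$" step, which needs $W$ open and $H_0$ compactly generated (guaranteed by the connectivity-respecting hypothesis and \Cref{lem:stabilizer_compactlygenerated}); this is standard but should be stated explicitly, perhaps as a small separate lemma, and it is where \Cref{lem:DifferentWsAreCoarselyEquivalent} guarantees independence of the choice of $W$.
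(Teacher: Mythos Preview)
Your approach is essentially the paper's: take a $\CalQ$-chain of length $m$, use \Cref{prop:ConnectedSetInverseImage} (via $Q \subseteq \CalO_0$) to force every transition element into $H_0$, trap all of them in a fixed compact subset of $H_0$, cover that compact set by $W^R$, and sum. The paper streamlines this by using the \emph{single} compact set $((\overline{Q},\overline{Q})) \cap H_0$ for \emph{all} transitions---both the intermediate ones $h_{i_k}^{-1} h_{i_{k+1}}$ and the endpoint ones $g^{-1} h_{i_1}$, $h_{i_m}^{-1} h$---which avoids your detour through property~(iii) of \Cref{lem:NicecoverExists}.

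That detour contains a small but genuine slip: you claim that $W' := \{ a \in H_0 : \|a\| \leq \Lambda \}$ is compact because it is ``closed and bounded.'' Closed and norm-bounded does \emph{not} imply compact in $\GL(d,\R)$ (or in a closed subgroup thereof), since a bounded sequence can have inverses with norms tending to infinity. The fix is immediate and already implicit in your own argument for the endpoints: the overlap condition $h_{i_k}^{-T} Q \cap h_{i_{k+1}}^{-T} Q \neq \emptyset$ directly gives $h_{i_k}^{-1} h_{i_{k+1}} \in ((\overline{Q},\overline{Q}))$, which is compact by \Cref{cor:basic_integrably}. So replace $W'$ by $((\overline{Q},\overline{Q})) \cap H_0$ throughout and the argument goes through with one constant $R$ instead of two. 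You should also state the trivial case $d_{\CalQ}(p_\xi(g),p_\eta(h)) = \infty$ explicitly.
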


\begin{proof}
  As a preparation, we note that $Q \subseteq \overline{Q} \subseteq \CalO_0$.
  Indeed, the set $Q \subseteq \CalO$ being connected with $C \subseteq Q$,
  it follows that $Q \subseteq \CalO_0$, because $\CalO_0$ is the connected component of $\CalO$
  containing $C$.
  Since connected components are closed subsets of the ambient space,
  it follows that $\CalO_0$ is closed in $\CalO$.
  Since $\overline{Q} \subseteq \CalO$
  (cf.\ \Cref{notation}), this entails $\overline{Q} \subseteq \CalO_0$.

  We define a suitable $R \in \N$,
  and then show the estimate \eqref{eq:estimate1}.
  By \Cref{cor:basic_integrably}, the set
  \[
    ((\overline{Q}, \overline{Q}))
    = \bigl\{ h \in H \colon h^T \overline{Q} \cap \overline{Q} \neq \emptyset \bigr\}
  \]
  is compact. Since $H_0 \subseteq H$ is closed by \Cref{lem:H0AutomaticallyClosed}, it follows that also the set
  $ ((\overline{Q}, \overline{Q})) \cap H_0$ is compact.
  The set $W$ being symmetric with $H_0 = \langle W \rangle$, we have
  $ ((\overline{Q}, \overline{Q})) \cap H_0 \bigcup_{m=1}^{\infty} W^m$, and thus
  $ ((\overline{Q}, \overline{Q})) \cap H_0 \subseteq W^{R}$ for some $R \in \N$, because $K \subseteq H_0$ is compact
  and $W \subseteq H_0$ is open.

  For showing the estimate \eqref{eq:estimate1}, we fix $\xi,\eta \in Q$
  and $g,h \in H$, and distinguish three cases:
  \\~\\
  \emph{Case 1.}
  If $d_{\CalQ} \bigl(p_\xi (g), p_\eta(h)\bigr) = \infty$, then the desired estimate is trivial.
  \\~\\
  \emph{Case 2.}
  If $d_{\CalQ} \bigl(p_\xi (g), p_\eta(h)\bigr) \!=\! 0$,
  then $g^{-T} \xi = p_\xi (g) \!=\! p_\eta (h) = h^{-T} \eta$.
  Since $\xi,\eta \in Q \subseteq \CalO_0$, it holds that
  $\emptyset \neq g^{-T} Q \cap h^{-T} Q \subseteq g^{-T} \CalO_0 \cap h^{-T} \CalO_0$.
  On the one hand, by \Cref{prop:ConnectedSetInverseImage}, this implies that $g^{-1}h \in H_0$.
  On the other hand, this implies that
  \[
    \emptyset
    \neq \overline{Q} \cap h^T g^{-T} \overline{Q}
    = \overline{Q} \cap (g^{-1} h)^T \overline{Q}.
  \]
  In combination, this shows that $g^{-1} h \in H_0 \cap ( (\overline{Q}, \overline{Q}))  \subseteq W^{R}$, and therefore $d_W (g,h) \leq R$,
  as desired.
  \\~\\
  \emph{Case~3.} If $0 < d_{\CalQ} \bigl(p_\xi (g), p_\eta(h)\bigr) < \infty$,
  then we set $m := d_{\CalQ} \bigl(p_\xi (g), p_\eta(h)\bigr) \in \N$.
  By definition of the $\CalQ$-chain distance $d_{\CalQ}$,
  there exist indices $i_1,\dots,i_m \in I$ satisfying
  $p_\xi (g) \in h_{i_1}^{-T} Q$ and $p_\eta (h) \in h_{i_m}^{-T} Q$, and 
  \[
    h_{i_j}^{-T} Q \cap h_{i_{j+1}}^{-T} Q \neq \emptyset
  \]
  for $1 \leq j \leq m-1$.

  First, note that since $\xi,\eta \in Q$, the properties $ p_\xi (g) \in h_{i_1}^{-T} Q$
  and $p_\eta (h) \in h_{i_m}^{-T} Q$ imply that
  $g^{-T} \overline{Q} \cap h_{i_1}^{-T} \overline{Q} \neq \emptyset$ and
  $h^{-T} \overline{Q} \cap h_{i_m}^{-T} \overline{Q} \neq \emptyset$, respectively.
  Likewise, $h_{i_j}^{-T} \overline{Q} \cap h_{i_{j+1}}^{-T} \overline{Q} \neq \emptyset$
  for $1 \leq j \leq m-1$.
  Since $\overline{Q} \subseteq \CalO_0$,  an application of \Cref{prop:ConnectedSetInverseImage}
  therefore shows that
  \[
    g^{-1} h_{i_1}, h_{i_j}^{-1} h_{i_{j+1}}, h_{i_m}^{-1} h \in  H_0
    .
  \]

  Second, note that, for $x,y \in H$, we have
  $x^{-1} y \in ( (\overline{Q}, \overline{Q}))$  if and only if
  $x^{-T} \overline{Q} \cap y^{-T} \overline{Q} \neq \emptyset$.

  A combination of these observations  shows that
  \[
    g^{-1} h_{i_1}, h_{i_j}^{-1} h_{i_{j+1}}, h_{i_m}^{-1} h
    \in ((\overline{Q}, \overline{Q})) \cap H_0
    \subseteq W^{R}
  \]
  for all $1 \leq j \leq m -1$.
  Therefore,
  \begin{align*}
    d_W (g,h)
    &\leq d_W (g,h_{i_1}) + \sum_{j=1}^{m-1} d_W (h_{i_j}, h_{i_{j+1}}) + d_W (h_{i_m}, h) \\
    &\leq (m+1) R \\
    &=    R \cdot d_{\CalQ} \bigl(p_\xi (g), p_\xi(h)\bigr) + R,
  \end{align*}
  which completes the proof.
\end{proof}

The following theorem provides the converse estimate to \Cref{thm:PXiIsQuasiExpansive}.

\begin{theorem}\label{thm:estimate2}
   With notation as in \Cref{notation}, define
  \[
    p_\xi : 
    H \to \CalO, \quad
    h \mapsto h^{-T} \xi,
  \]
  for $\xi \in \CalO$.
  Then there exists $R > 0$ such that
  \begin{align} \label{eq:estimate2}
    d_{\CalQ} \bigl(p_\xi (g), p_\eta (h)\bigr) \leq R \cdot d_W(g,h) + R
  \end{align}
  for all $g, h \in H$ and $\xi,\eta \in Q$.
\end{theorem}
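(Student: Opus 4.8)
The plan is to establish \eqref{eq:estimate2} by constructing, for any two points $p_\xi(g) = g^{-T}\xi$ and $p_\eta(h) = h^{-T}\eta$ in $\CalO$, an explicit $\CalQ$-chain connecting them whose length is controlled linearly by $d_W(g,h)$. The natural candidate for such a chain is obtained by following a word in $W$ realizing $d_W(g,h)$: if $g^{-1}h = w_1 w_2 \cdots w_m \in W^m$ with $m = d_W(g,h)$, then we have the intermediate group elements $g_0 := g$, $g_k := g\, w_1 \cdots w_k$ for $1 \le k \le m$, so that $g_m = h$ and $g_{k-1}^{-1} g_k = w_k \in W$. We then transport this word along the orbit, considering the points $\zeta_k := g_k^{-T}\xi \in \CalO$; these lie in $\CalO_0$ since $\xi \in Q \subseteq \CalO_0$ and $W \subseteq H_0$ stabilizes $\CalO_0$. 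Each consecutive pair $\zeta_{k-1}, \zeta_k$ differs by the action of $w_k \in W$, and since $W$ is precompact, there is a single compact set controlling all such transitions uniformly.

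The key steps, in order, are: (1) Reduce to the case $d_W(g,h) < \infty$, since otherwise \eqref{eq:estimate2} is trivial; and handle the case $p_\xi(g) = p_\eta(h)$ (or more generally the case where the two points already lie in a common cover element) separately as a base case. (2) Use the well-spread (in particular $V$-dense, for the $P$ from the induced cover) property of $(h_i)_{i \in I}$ to choose, for each of the finitely many anchor points along the orbit, an index $i \in I$ with the anchor point lying in $h_i^{-T} P$; more precisely, for each $\zeta_k$ pick $i_k \in I$ with $\zeta_k \in h_{i_k}^{-T} P \subseteq h_{i_k}^{-T} Q$. (3) Show that consecutive cover elements $h_{i_{k-1}}^{-T} Q$ and $h_{i_k}^{-T} Q$ intersect, or are linked by a uniformly bounded number of intermediate cover elements: since $\zeta_{k-1} = g_{k-1}^{-T}\xi$ and $\zeta_k = g_k^{-T}\xi = g_{k-1}^{-T}(w_k^{-T}\xi)$ with $w_k^{-T}\xi \in W^{-T} Q$, a compact subset of $\CalO_0$, one controls the "jump" by a compact set; combining with \Cref{cor:basic_integrably} (the sets $((C_1,C_2))$ are compact) and \Cref{lem:uniform_intersection}, one gets that each jump is crossed by at most $R_0$ cover elements for a uniform $R_0$. (4) Concatenate: the resulting $\CalQ$-chain from $p_\xi(g)$ to $p_\eta(h)$ has length at most $R_0 (m+1) + (\text{bounded correction for the endpoints } \xi \text{ vs. } \eta \text{ and } \zeta_0 \text{ vs. } p_\xi(g) \text{, which are equal, versus } p_\eta(h) \text{ vs. } \zeta_m)$; note $\zeta_m = h^{-T}\xi$ whereas $p_\eta(h) = h^{-T}\eta$, so one needs a final bounded bridge from $h^{-T}\xi$ to $h^{-T}\eta$, which is handled because $\xi, \eta \in Q$ both lie in $h^{-T}Q$-translates meeting a fixed compact set — again uniformly bounded by \Cref{lem:uniform_intersection}. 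This yields $d_{\CalQ}(p_\xi(g), p_\eta(h)) \le R\cdot d_W(g,h) + R$ with $R$ depending only on $R_0$ and these boundary corrections.

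The main obstacle I expect is Step (3): making precise and uniform the claim that a single elementary step $\zeta_{k-1} \leadsto \zeta_k$ corresponds to a bounded number of hops in the cover $\CalQ$. The subtlety is that the cover elements $h_i^{-T}Q$ have "sizes" that vary wildly across $I$ (their diameters are not uniformly bounded in the Euclidean metric), so one cannot argue by Euclidean distance; the argument must be genuinely affine-equivariant. The right way is to observe that $\zeta_{k-1}$ and $\zeta_k$ both lie in $g_{k-1}^{-T} K$ for the fixed compact set $K := Q \cup W^{-T} Q \subseteq \CalO_0$ (using $\xi \in Q$ and $w_k^{-T}\xi \in W^{-T}Q$), then note that $\zeta_{k-1} \in h_{i_{k-1}}^{-T} P$ and $\zeta_k \in h_{i_k}^{-T} P$ forces $h_{i_{k-1}}^{-T}\overline{P} \cap g_{k-1}^{-T} K \neq \emptyset$ and similarly for $i_k$; by \Cref{cor:basic_integrably} the sets $((\overline{P}, K))$ are compact, so $h_{i_{k-1}}^{-1} g_{k-1}$ and $h_{i_k}^{-1} g_{k-1}$ lie in a fixed compact set, hence $h_{i_{k-1}}^{-1} h_{i_k}$ does too, and then \Cref{lem:uniform_intersection} (applied with $(h_i)_{i\in I}$ uniformly discrete, which holds since it is well-spread) bounds the number of cover elements needed to connect $h_{i_{k-1}}^{-T}Q$ to $h_{i_k}^{-T}Q$ by a uniform constant. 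Once this affine-equivariant bookkeeping is set up correctly, the concatenation in Step (4) is routine. The endpoint corrections in Step (4) are similar in spirit but simpler, since there only one group element ($g$ or $h$) is involved.
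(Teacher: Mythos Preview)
Your overall architecture matches the paper's: reduce to a uniform single-step bound (your Step~(3), the paper's Step~1) and then concatenate along a word in $W$ (your Step~(4), the paper's Step~2). The endpoint correction you worry about in Step~(4) is in fact absorbed by the single-step bound itself, applied with $d_W(g,h)=0$ and $\xi\neq\eta$.

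The genuine gap is in Step~(3). You correctly place both $\zeta_{k-1}$ and $\zeta_k$ in $g_{k-1}^{-T}K$ for a fixed compact $K$, and \Cref{lem:uniform_intersection} indeed bounds the \emph{number} $R_0$ of cover elements $h_i^{-T}Q$ meeting $g_{k-1}^{-T}K$. But a cardinality bound is not a chain bound: a priori those $\le R_0$ sets could fall into several mutually non-intersecting clusters, and then no $\CalQ$-chain of length $\le R_0$ connects $\zeta_{k-1}$ to $\zeta_k$. Your fallback observation that $h_{i_{k-1}}^{-1}h_{i_k}$ lies in a fixed compact subset of $H$ does not rescue this either: to convert that into a $d_{\CalQ}$-bound you would need to already know that $d_W$ controls $d_{\CalQ}$, which is circular. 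What is missing is \emph{connectedness}, and this is precisely where the standing hypothesis that $Q$ is connected (and that $H$ is connectivity-respecting) enters. Your $K=Q\cup W^{-T}Q$ need not be connected; one must first enlarge it, via \Cref{lem:ConnectedSuperSet}, to a connected compact $K_2\subseteq\CalO_0$ (this uses $W\subseteq H_0$ so that $\overline{W}^{-T}\overline{Q}\subseteq\CalO_0$). Then $g_{k-1}^{-T}K_2$ is connected, and a clopen argument converts the cardinality bound into a chain bound: the set of $z\in g_{k-1}^{-T}K_2$ reachable from a fixed $x$ by a $\CalQ$-chain of length $\le R_0$, using pairwise distinct cover elements that all meet $g_{k-1}^{-T}K_2$, is nonempty, relatively open, and relatively closed in $g_{k-1}^{-T}K_2$, hence all of it. Once you insert this, your proof coincides with the paper's.
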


\begin{proof}
We split the proof into two steps.
\\~\\
\textbf{Step 1.} In this step, we show that there exists $R>0$ such that
\begin{align} \label{eq:claim_step1}
  d_{\CalQ} \bigl(p_{\xi} (g), p_{\eta} (h)\bigr)
  \leq R
\end{align}
for all $\xi,\eta \in Q$ and all $g,h \in H$ with $d_W (g,h) \leq 1$.
For this, we fix $\xi,\eta \in Q$ and $g,h \in H$ with $d_W (g,h) \leq 1$, so that $h \in g W$.

As in the proof of \Cref{thm:PXiIsQuasiExpansive},
the set $\overline{Q} \subseteq \CalO_0$ is compact.
Since $\overline{W} \subseteq H$ is compact with $W \subseteq H_0$, and since $H_0 \subseteq H$
is closed (cf.\ \Cref{lem:H0AutomaticallyClosed}), this implies that
\[
  K_1
  := \overline{W}^{-T} \overline{Q}
  \subseteq H_0^{-T} \CalO_0
  \subseteq H_0^{T} \CalO_0
  = \CalO_0
\]
is compact.
Thus, \Cref{lem:ConnectedSuperSet} yields a connected compact set $K_2 \subseteq \CalO_0$
satisfying $K_1 \subseteq K_2$.
Finally, set $K_3 := \overline{Q}$.
Defining
\[
  I_g(K_2, K_3)
  := \big\{ i \in I \,\,:\,\, g^{-T} K_2 \cap h_i^{-T} K_3 \neq \emptyset \big\}, 
\]
an application of \Cref{lem:uniform_intersection} yields a constant $R > 0$
(independent of $g,h,\xi,\eta$) such that
\[
  \# I_g(K_2, K_3) \leq R.
\]  

For proving the claim \eqref{eq:claim_step1}, we will show that $d_{\CalQ} (x,z) \leq R$ for all $x, z \in K := g^{-T} K_2$. Note that this indeed yields  \eqref{eq:claim_step1} as
$
  p_\xi (g)
     = g^{-T} \xi
     \in g^{-T} W^{-T} \xi
     \subseteq g^{-T} K_1
     \subseteq g^{-T} K_2
     = K
$
and, similarly, $p_\eta (h) \in K$.
Hence, $d_{\mathcal{Q}}(p_\xi(g), p_\eta(h)) \leq R $, as claimed.

For showing that $d_{\CalQ} (x,z) \leq R$ for all $x, z \in K$, 
let $x \in K$ be arbitrary and define $C_x$ to be the subset of $K$ consisting of those $z \in K$ for which there exists
$m \leq R$ and $(Q^{(\ell)})_{\ell = 1}^m \in \mathcal{Q}_m(x,z)$
with $Q^{(\ell)} \cap K \neq \emptyset$ for $1 \leq \ell \leq m$
and such that the $(Q^{(\ell)})_{\ell=1}^m$ are pairwise distinct.
We will show that $C_x = K$, which then yields $d_{\mathcal{Q}}(x,z)\leq R$ for all $z \in K$.

First, note that  $C_x$ is relatively open in $K$. Indeed, 
if $z \in C_x$, as established by a suitable $\mathcal{Q}$-chain $(Q^{(\ell)})_{\ell=1}^m$,
then $z' \in C_x$ for all $z' \in Q^{(m)} \cap K$, and $Q^{(m)}$ is a set of the form
$Q^{(m)} = h^{-T} Q$, which is open since $Q$ is.

Second, we show that $C_x$ is also relatively closed in $K$.
For this purpose, let $z_{1}$ be an element of the closure $\overline{C_{x}}$ of $C_x$ in $K$.
Since $\mathcal{Q} = (Q_i)_{i \in I}$ is a cover of $\mathcal{O} \supseteq K$, there is some
$i \in I$ with $z_{1} \in Q_i$.
The set $K \cap Q_i$ being a relatively open neighborhood of $z_{1}$ in $K$,
this implies the existence of some $z \in (K \cap Q_i) \cap C_{x}$.
Hence, there is a $\mathcal{Q}$-chain
$Q'_{1}, \ldots , Q'_{m}$ of length $m \leq R$ that connects $x$ and $z$
and such that $Q_{\ell}' \cap K \neq \emptyset$ for all $1 \leq \ell \leq m$
and such that the $(Q_\ell ')_{\ell=1}^m$ are pairwise distinct.
We will now distinguish two cases and show that in both cases we have $z_1 \in C_x$,
thereby showing that $C_x$ is relatively closed in $K$.
\\~\\
\emph{Case 1:} $Q'_{j} = Q_i$ for some $j \in \{1, \ldots, m\}$. In this case,
$Q'_{1}, \ldots , Q'_{j}$ is a $\mathcal{Q}$-chain of length $j\leq R$
consisting of pairwise distinct sets that connects $x$ and $z_{1}$
and such that $Q'_{t} \cap K \neq \emptyset$ for all $1 \leq t \leq j$.
Hence, $z_1 \in C_x$.
\\~\\
\emph{Case 2:} $Q'_{j} \neq Q_i$ for all $j \in \{1, \ldots, m\}$. In this case,
 the sequence $Q_1 ', \dots, Q_m ', Q_i$ is contained in $\CalQ = (h_i^{-T} Q)_{i \in I}$
and each of these sets has nonempty intersection with $K = g^{-T} K_2$.
By choice of $R$, this entails $m+1 \le R$.
Thus, the sequence $Q_1 ', \dots, Q_m ', Q_i$ is a $\CalQ$-chain of length at most $R$
consisting of pairwise distinct sets that all have nonempty intersection with $K$
and connecting $x$ and $z_1$.
Hence also in this case, $z_1 \in C_x$.

Since $K$ is connected, the fact that $C_x$ is both relatively closed and relatively open in $K$
(and nonempty, since $x \in C_x$) implies that $C_{x} = K$,
and hence completes Step 1.
\\~\\
\textbf{Step 2.}
Using Step 1, we now prove the general statement of the theorem. 
Let $g, h \in H$.
Note that the statement is immediate whenever $d_W(g,h) = \infty$.
Moreover, note that if $d_W(g,h) = 0$, then $d_W (g,h) \leq 1$ and hence
Step~1 implies that $d_{\CalQ}(p_\xi(g), p_\eta (h)) \leq R = R \cdot d_W(g,h) + R$. 

It remains to consider the case that $d_W(g,h) \in \N$.
Define $k := d_W(g,h)$, and write $g^{-1}h = \Pi_{i = 1}^k w_i$ for suitable $w_i \in W$.
Setting $h_0 := g$ and $h_j := g \cdot \Pi_{i = 1}^j w_i$ for $j=1,\ldots,k$,
it follows that $d_W(h_j,h_{j+1}) \le 1$ for all $0 \le j < k$.
An application of the triangle inequality for $d_\mathcal{Q}$ therefore yields 
\begin{align*}
  d_{\mathcal{Q}} \bigl(p_{\xi}(g),p_{\eta}(h)\bigr)
  & \leq d_\CalQ (p_\xi(g), p_\xi(h)) + d_\CalQ (p_\xi (h), p_\eta (h)) \\
  & \leq R + \sum_{j=0}^{k-1} d_{\mathcal{Q}} \bigl(p_\xi(h_j), p_{\xi} (h_{j+1})\bigr)
    \leq R + k R
    = R \cdot d_W(g,h) + R,
\end{align*} 
where the second inequality used inequality (\ref{eq:claim_step1}), observing that $R$ is independent of the choice of the pair $(\xi,\eta)$.
\end{proof}

We can now state and prove the main result of this section.

\begin{theorem}\label{thm:FullOrbitMapIsQuasiIsometry}
 With notation as in \Cref{notation}, the full orbit map
  \[
    p : 
    (H \times C, d_{H \times C}) \to (\CalO, d_{\CalQ}), \quad
    (h,\xi) \mapsto h^{-T} \xi,
  \]
  is a surjective quasi-isometry.
\end{theorem}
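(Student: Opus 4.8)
The plan is to verify the three defining conditions of a quasi-isometry for the full orbit map $p$, leveraging the two estimates already established in \Cref{thm:PXiIsQuasiExpansive} and \Cref{thm:estimate2}, together with the fact (from \Cref{notation} \ref{enu:Notationcover}) that $C \subseteq Q$. First I would observe surjectivity: since $\CalO = H^T C$, for any $\eta \in \CalO$ there exist $h \in H$ and $\xi \in C$ with $\eta = h^T \xi = (h^{-1})^{-T}\xi = p(h^{-1}, \xi)$; as $h^{-1}$ ranges over all of $H$ this shows $p$ is onto, which in particular gives condition \ref{enu:QuasiIsometryCondition2} with $R_3 = 0$.

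Next I would establish the two-sided metric estimate \ref{enu:QuasiIsometryCondition1}. For a pair $(h,\xi), (g,\eta) \in H \times C$, recall $d_{H\times C}\bigl((h,\xi),(g,\eta)\bigr) = d_W(h,g) + d_C(\xi,\eta)$ with $d_C$ bounded, say $d_C \leq D$. Since $C \subseteq Q$, both \Cref{thm:PXiIsQuasiExpansive} and \Cref{thm:estimate2} apply with $\xi,\eta \in Q$. From \Cref{thm:estimate2} we get $d_{\CalQ}\bigl(p(h,\xi), p(g,\eta)\bigr) = d_{\CalQ}\bigl(p_\xi(h), p_\eta(g)\bigr) \leq R\, d_W(h,g) + R \leq R\, d_{H\times C}\bigl((h,\xi),(g,\eta)\bigr) + R$, which is the upper bound. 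For the lower bound, \Cref{thm:PXiIsQuasiExpansive} gives $d_W(h,g) \leq R\, d_{\CalQ}\bigl(p_\xi(h), p_\eta(g)\bigr) + R$, and combining with the trivial bound $d_C(\xi,\eta) \leq D$ yields
\[
  d_{H\times C}\bigl((h,\xi),(g,\eta)\bigr)
  = d_W(h,g) + d_C(\xi,\eta)
  \leq R\, d_{\CalQ}\bigl(p(h,\xi),p(g,\eta)\bigr) + R + D .
\]
Taking a single constant $R' := \max\{R,\, R+D\}$ (or just enlarging $R$) then produces constants $R_1, R_2$ as required in \ref{enu:QuasiIsometryCondition1}: the upper estimate gives $d_{\CalQ}(p(\cdot),p(\cdot)) \leq R_1 d_{H\times C} + R_2$, and the lower estimate rearranges to $R_1^{-1} d_{H\times C} - R_2 \leq d_{\CalQ}(p(\cdot),p(\cdot))$.

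I do not expect a genuine obstacle here: the theorem is essentially an assembly of the two hard estimates already proven, plus the elementary surjectivity remark and the harmless observation that $d_C$ is bounded so it contributes only an additive constant. The one point requiring a little care is bookkeeping the constants --- making sure the same $R$ (or a uniform enlargement of it) works simultaneously for both directions and for all pairs $(\xi,\eta)$ --- but both \Cref{thm:PXiIsQuasiExpansive} and \Cref{thm:estimate2} already state their constants uniformly over $\xi,\eta \in Q$ and over $g,h \in H$, so this is purely a matter of taking a maximum. It is worth remarking explicitly that this clean conclusion (a quasi-isometry between $H \times C$ and $\CalO$, rather than just between $H$ and an orbit) is exactly what the choice of $W$ as a generating set of the component stabilizer $H_0$ buys us, as flagged in the remark following \Cref{notation}.
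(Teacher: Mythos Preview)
Your proposal is correct and follows essentially the same approach as the paper: both proofs combine the two estimates from \Cref{thm:PXiIsQuasiExpansive} and \Cref{thm:estimate2} (applied with $\xi,\eta \in C \subseteq Q$), use boundedness of $d_C$ to absorb the $C$-part into the additive constant, and note surjectivity via $\CalO = H^T C = H^{-T} C$. The paper compresses the metric estimate into an $\asymp$-chain while you spell out the constants explicitly; your version is in fact slightly cleaner, since the paper's step $d_W + d_C \asymp d_W + 1$ is only true up to additive constants and not in the purely multiplicative sense of $\asymp$ defined in the notation section.
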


\begin{proof}
  By \Cref{thm:PXiIsQuasiExpansive,thm:estimate2}, it follows that, for
  $(h,\xi),(g,\eta) \in H \times C$,
  \begin{align*}
    d_{H \times C} ( (h,\xi), (g,\eta)) 
    &= d_{W} (h,g) + d_C (\xi,\eta) \\
    &\asymp d_W (h,g) + 1 \\ 
    &\asymp d_{\CalQ} (p_\xi (h), p_{\eta}(g)) + 1 \\
    &= d_{\CalQ} (p(h,\xi), p(g,\eta)) + 1,
  \end{align*}
  where the second step used that  $d_C$ is bounded.
  This implies that $p$ satisfies Condition~\ref{enu:QuasiIsometryCondition1}.

  Moreover, since $p(H \times C) = H^{-T} C = H^T C = \CalO$, the map  $p$ is surjective.
  This easily implies that Condition~\ref{enu:QuasiIsometryCondition2} is satisfied as well.
\end{proof}

The following consequence of \Cref{thm:FullOrbitMapIsQuasiIsometry} is what actually will be used in most of our applications.

\begin{corollary} \label{cor:orbitmap_quasi}
    With notation as in \Cref{notation}, the orbit map
    \[
      p_{\xi} : 
      (H, d_W) \to (\CalO, d_{\CalQ}), \quad
      \xi \mapsto h^{-T} \xi,
    \]
    is a quasi-isometry for each $\xi \in \CalO_0$.
\end{corollary}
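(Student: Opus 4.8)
The plan is to deduce the statement from \Cref{thm:FullOrbitMapIsQuasiIsometry} — which says that the full orbit map $p : (H \times C, d_{H \times C}) \to (\CalO, d_{\CalQ})$ is a surjective quasi-isometry — together with the fact, recorded in \Cref{sec:quasi-isometry}, that a composition of quasi-isometries is again a quasi-isometry. As a preliminary step I would record the identity $\CalO_0 = H_0^T C$. The inclusion ``$\supseteq$'' is immediate, since $C \subseteq \CalO_0$ and $H_0$ is by definition the stabilizer of $\CalO_0$ under the action $(h,\CalO') \mapsto h^T \CalO'$. For ``$\subseteq$'', given $\xi \in \CalO_0$, write $\xi = h^T c$ with $h \in H$ and $c \in C$, which is possible because $\CalO = H^T C$; since $c \in C \subseteq \CalO_0$ this forces $\xi \in h^T \CalO_0$, and as $h^T \CalO_0$ is again a connected component of $\CalO$ (cf.\ the discussion around \Cref{eq:ActionOnComponents}) that meets $\CalO_0$ in the point $\xi$, we conclude $h^T \CalO_0 = \CalO_0$, i.e.\ $h \in H_0$, whence $\xi \in H_0^T C$.

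Now fix $\xi \in \CalO_0$ and, using the identity just established, write $\xi = a^T c$ with $a \in H_0$ and $c \in C$; set $g_0 := a^{-1} \in H_0$. Consider the right translation $\rho : (H, d_W) \to (H, d_W)$, $h \mapsto h g_0$, the inclusion $\iota_c : (H, d_W) \to (H \times C, d_{H \times C})$, $h \mapsto (h, c)$, and the full orbit map $p$. A short computation with transposes gives, for every $h \in H$,
\[
  (p \circ \iota_c \circ \rho)(h)
  = p(h g_0, c)
  = (h g_0)^{-T} c
  = h^{-T} g_0^{-T} c
  = h^{-T} a^T c
  = h^{-T} \xi
  = p_\xi(h),
\]
so that $p_\xi = p \circ \iota_c \circ \rho$. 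It therefore suffices to show that each of the three maps $\rho$, $\iota_c$, $p$ is a quasi-isometry.

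For $p$ this is precisely \Cref{thm:FullOrbitMapIsQuasiIsometry}. The map $\iota_c$ is an isometric embedding, since $d_{H \times C}\bigl((h,c),(h',c)\bigr) = d_W(h,h') + d_C(c,c) = d_W(h,h')$, and it is coarsely surjective because $d_{H \times C}\bigl((g,c),(g,\eta)\bigr) = d_C(c,\eta)$ is bounded by the (finite) $d_C$-diameter of $C$; hence $\iota_c$ is a quasi-isometry. Finally $\rho$ is a bijection, and choosing $N \in \N$ with $g_0, g_0^{-1} \in W^N$ — possible since $g_0 \in H_0 = \langle W \rangle$ and $W$ is symmetric — one gets $g_0^{-1} W^m g_0 \subseteq W^{m+2N}$ and $g_0 W^m g_0^{-1} \subseteq W^{m+2N}$, which yields $\lvert d_W(h g_0, h' g_0) - d_W(h,h') \rvert \leq 2N$ for all $h,h' \in H$; thus $\rho$ is a quasi-isometry. (Alternatively, $g_0^{-1} W g_0$ is again an open, precompact, symmetric generating set of $H_0$, so one may instead invoke \Cref{lem:DifferentWsAreCoarselyEquivalent}.) Since the composition of quasi-isometries is a quasi-isometry, the claim follows.

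The step I expect to demand the most care is not any individual estimate but keeping the transpose--inverse bookkeeping straight while making sure that the element $g_0$ by which one right-translates genuinely lies in $H_0$. This is exactly where the hypothesis $\xi \in \CalO_0$ — rather than merely $\xi \in \CalO$ — is used, and it is indispensable: right translation by an element outside $H_0$ need not even preserve finiteness of $d_W$, so the reduction would collapse for a $\xi$ lying in a different connected component of $\CalO$.
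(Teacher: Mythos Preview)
Your proof is correct, and it takes a genuinely different route from the paper's.

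The paper also reduces to \Cref{thm:FullOrbitMapIsQuasiIsometry}, but handles general $\xi \in \CalO_0$ by \emph{enlarging} the compact set: using \Cref{lem:ConnectedSuperSet} it picks a compact connected $C_\xi \supseteq C \cup \{\xi\}$, checks that $C_\xi$ again satisfies the conditions of \Cref{notation}, builds a new induced cover $\CalQ'$ via \Cref{lem:NicecoverExists}, applies the base case $\xi \in C_\xi$ to get that $p_\xi : (H,d_W) \to (\CalO,d_{\CalQ'})$ is a quasi-isometry, and finally invokes \Cref{lem:coverChoiceDoesNotMatter} to pass from $d_{\CalQ'}$ back to $d_\CalQ$. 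Your argument instead stays with the original $C$ and $\CalQ$ throughout: the identity $\CalO_0 = H_0^T C$ lets you factor $p_\xi = p \circ \iota_c \circ \rho$ through a right translation by an element of $H_0$, and the crucial point --- that right translation by $g_0 \in H_0 = \langle W\rangle$ moves $d_W$-distances by at most a fixed additive constant --- makes this work without touching the cover at all. Your approach is more elementary (it avoids re-invoking \Cref{lem:NicecoverExists} and \Cref{lem:coverChoiceDoesNotMatter}) and isolates cleanly why the hypothesis $\xi \in \CalO_0$ matters; the paper's approach has the minor advantage of not needing to analyse right translations under $d_W$, but at the cost of rebuilding the cover machinery.
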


\begin{proof}
First, let $\xi \in C$ and note that the inclusion map $\iota : H \to H \times C, \; h \mapsto (h, \xi)$ is a quasi-isometry.
Since the full orbit map $p : (H \times C, d_{H\times C}) \to (\CalO, d_{\CalQ})$ is a quasi-isometry
by \Cref{thm:FullOrbitMapIsQuasiIsometry}, it follows that $p_{\xi} = p \circ \iota$
is a quasi-isometry as the composition of quasi-isometries.
Since $\xi$ was arbitrary, this proves the claim for all $\xi \in C$.

Second, let $\xi \in \CalO_0$.
Using \Cref{lem:ConnectedSuperSet}, we can choose a compact, connected set
$C_{\xi} \subseteq \CalO$ satisfying $\{\xi \} \cup C \subseteq C_{\xi}$.
Since $\CalO = H^T C \subseteq H^T C_\xi \subseteq H^T \CalO = \CalO$, it follows that also $\CalO = H^T C_{\xi}$.
Moreover, $C_{\xi}$ is also contained in the connected component $\CalO_0$ of $\CalO$ containing $C$,
and as such the stabilizer subgroup of its connected component is compactly generated.
This shows that $C_{\xi}$ satisfies condition (A1) of \Cref{notation}.
In addition, using \Cref{lem:NicecoverExists}, there exists an open, bounded, connected set $Q_\xi$
satisfying $C \subseteq C_{\xi} \subseteq Q_{\xi}$ and $\overline{Q_{\xi}} \subseteq \mathcal{O}$
together with a well-spread family $(g_j)_{j \in J}$ in $H$
such that $\CalQ' = (g_j Q_{\xi})_{j \in J}$ is an induced cover of $\CalO$.
This shows that $C_{\xi}$ also satisfies the remaining conditions of \Cref{notation}.
Hence, by the previous paragraph, it follows that $p_{\xi} : (H, d_W) \to (\CalO, d_{\CalQ'})$ is a quasi-isometry.
Since the identity between $(\CalO, d_{\CalQ})$ and $(\CalO, d_{\CalQ'})$
forms a quasi-isometry by \Cref{lem:coverChoiceDoesNotMatter},
this implies the claim for all $\xi \in \CalO_0$. 
\end{proof}

\section{Coorbit spaces associated to different dilation groups}
\label{sec:coorbit_equivalence}

This section exploits the coarse geometric results obtained in the previous sections
to compare coorbit spaces associated with different dilation groups. We start by recalling the central notions on wavelet coorbit spaces.

\subsection{Wavelet transforms} 

Let $H \leq \mathrm{GL}(d, \R)$ be a closed subgroup with Haar measure $\mu_H$
and modular function $\Delta_H$.
The semidirect product group $G = \mathbb{R}^d \rtimes H$ is the set $\mathbb{R}^d \times H$
equipped with the group law $(x_1, h_1) (x_2 ,h_2) = (x_1 + h_1 x_2, h_1 h_2)$.
A direct verification shows that a left Haar measure $\mu_G$ on $G$ is given by
$d \mu_G (x,h) = |\det h|^{-1} dx d\mu_H (h)$,
and that the modular function $\Delta_G$ on $G$ is given by
$\Delta_G (x,h) = |\det h|^{-1} \Delta_H (h)$.

The \emph{quasi-regular representation} of $G$ on $L^2 (\R^d)$ is given by
\[
  [\pi(x, h) f](t)
  = |\det h|^{-\frac{1}{2}} f(h^{-1} (t-x)),
  \quad t \in \R^d.
\]
For a nonzero $\psi \in L^2 (\R^d)$, its associated \emph{wavelet transform}
is the map $W_{\psi} : L^2 (\R^d) \to L^{\infty} (G) $ defined by
\[
  W_{\psi} f (x,h)
  = \langle f, \pi(x,h) \psi \rangle,
  \quad (x,h) \in \R^d \times H,
\]
for $f \in L^2 (\R^d)$.
A function $\psi \in L^2 (\R^d)$ is said to be \emph{admissible} if $W_{\psi}$ is an
 isometry from $L^2 (\R^d)$ into $L^2 (G)$.
Equivalently, a function $\psi \in L^2 (\R^d)$ is admissible
if and only if its Fourier transform $\widehat{\psi}$ satisfies
\[
  \int_H |\widehat{\psi} (h^T \xi)|^2 \; d\mu_H (h)
  = 1
\]
for a.e.\ $\xi \in \R^d$; see, e.g., \cite[Theorem 1]{fuehr2002continuous}
and \cite[Theorem 1.1]{laugesen2002characterization}.

The significance of integrably admissible dilation groups for wavelet and coorbit theory
is that they guarantee the existence of admissible vectors with convenient additional properties.
The following result is \cite[Proposition 2.7]{currey2016integrable}
(see also \cite[Theorem 2.10]{FuehrVelthoven}).

\begin{proposition}[\cite{currey2016integrable}]\label{prop:CcInftyAdmissibleVectorExists}
  Let $H \leq \mathrm{GL}(d, \R)$ be integrably admissible with frequency support $\CalO$.
  Then there exists an admissible vector $\psi \in L^2 (\R^d)$
  with Fourier transform $\widehat{\psi} \in C_c^{\infty} (\CalO)$.
\end{proposition}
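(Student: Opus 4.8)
The plan is to construct $\widehat\psi$ directly as a normalized version of a suitable bump function on $\CalO$, using the properness condition \ref{enu:FrequencySupportProperness} to control the relevant integral. First I would pick any nonzero $\varphi \in C_c^\infty(\CalO)$ with $\varphi \geq 0$ (such a function exists since $\CalO$ is open and nonempty), and form the function
\[
  \Phi : \R^d \to [0,\infty], \quad
  \xi \mapsto \int_H |\varphi(h^T \xi)|^2 \, d\mu_H(h).
\]
The key claim is that $\Phi$ is finite and positive a.e.\ on $\CalO$, and in fact bounded on compact subsets of $\CalO$ and bounded below on compact subsets of $\CalO$. Positivity a.e.\ follows because $\CalO = H^T C$ for a compact $C$, and for any fixed open set $\CalO' \subseteq \CalO$ on which $\varphi > 0$, the set $\{\xi : \Phi(\xi) > 0\}$ contains $H^T \CalO'$, which is $H^T$-invariant, open, and of positive measure, hence (being $H^T$-invariant and co-null complement impossible for a positive-measure invariant set in an ergodic-type situation — more simply, since $\CalO$ is of full measure and $H^T\CalO'$ has full measure in $\CalO$ by the same covering argument as in \Cref{lem:DualOrbitUnion}) of full measure in $\CalO$.

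For the finiteness and local boundedness, I would argue as in \Cref{lem:SpecialIntegrability}: fix a compact set $K \subseteq \CalO$, set $\widetilde K := K \cup \supp\varphi \subseteq \CalO$, and observe that for $\xi \in K$ the integrand $|\varphi(h^T\xi)|^2$ vanishes unless $h^T\xi \in \supp\varphi \subseteq \widetilde K$, i.e.\ unless $(h,\xi) \in [\widetilde K]$, which is compact by \ref{enu:FrequencySupportProperness}. Hence $\sup_{\xi \in K}\Phi(\xi) \leq \|\varphi\|_{\sup}^2 \cdot \mu_H(\pi_1([\widetilde K])) < \infty$, where $\pi_1 : H \times \R^d \to H$ is the projection. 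For the lower bound on a compact set $K \subseteq \CalO$, I would cover $K$ by finitely many translates of $\CalO'$ under $H^T$ and use a compactness/continuity argument (together with \Cref{cor:basic_integrably}(ii) to control which $h \in H$ are relevant) to show $\inf_{\xi \in K}\Phi(\xi) > 0$; alternatively one can define $\widehat\psi$ locally and patch. Either way, $\Phi$ is locally bounded above and below on $\CalO$, so $1/\sqrt{\Phi}$ is well-defined, positive, and locally bounded on $\CalO$.

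Now I would set $\widehat\psi := \varphi / \sqrt{\Phi}$ and check the two required properties. The normalization identity $\int_H |\widehat\psi(h^T\xi)|^2 \, d\mu_H(h) = 1$ holds for a.e.\ $\xi$: using left-invariance of $\mu_H$ one has $\int_H |\widehat\psi(h^T\xi)|^2 d\mu_H(h) = \int_H |\varphi(h^T\xi)|^2 / \Phi(h^T\xi)\, d\mu_H(h)$, and since $\Phi(h^T\xi) = \Phi(\xi)$ by the left-invariance of Haar measure (the substitution $h \mapsto gh$ shows $\Phi$ is constant on $H^T$-orbits), this equals $\Phi(\xi)/\Phi(\xi) = 1$ wherever $\Phi(\xi) \in (0,\infty)$, which is a.e.\ on $\CalO$ and hence a.e.\ on $\R^d$. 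By the admissibility characterization recalled just above the proposition, $\psi$ is admissible. Finally, $\widehat\psi \in C_c^\infty(\CalO)$: the support is contained in $\supp\varphi$, which is compact in $\CalO$, and smoothness follows once we know $\Phi$ is smooth (not merely locally bounded) on a neighborhood of $\supp\varphi$ — this is where I expect the main technical obstacle. To get smoothness of $\Phi$, I would differentiate under the integral sign, justified by the fact that on a fixed compact neighborhood $K$ of $\supp\varphi$ the integration in $h$ effectively ranges over the fixed compact set $\pi_1([\widetilde K])$ (by the properness argument above), on which all $\xi$-derivatives of $h \mapsto |\varphi(h^T\xi)|^2$ are uniformly bounded; dominated convergence then yields $\Phi \in C^\infty$ near $\supp\varphi$, and since $\Phi > 0$ there, $1/\sqrt{\Phi}$ is smooth there, completing the proof. (If one prefers to avoid the global lower bound on $\Phi$, it suffices to carry out this smoothness/positivity argument only on the open set where $\varphi > 0$ and set $\widehat\psi := 0$ off $\supp\varphi$; the resulting function is still smooth since $\widehat\psi$ and all its derivatives vanish approaching $\partial(\supp\varphi)$ from a neighborhood where $\varphi$ is smooth and compactly supported.)
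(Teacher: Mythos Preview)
The paper does not prove this proposition itself; it is quoted from \cite{currey2016integrable}. Your overall strategy --- normalize a nonnegative bump $\varphi\in C_c^\infty(\CalO)$ by the square root of its Calder\'on integral $\Phi(\xi)=\int_H|\varphi(h^T\xi)|^2\,d\mu_H(h)$ --- is the standard construction, and the arguments you give for local finiteness of $\Phi$ (via condition~\ref{enu:FrequencySupportProperness}), for the $H^T$-invariance of $\Phi$, and for smoothness by differentiating under the integral over the compact set $((K,\supp\varphi))$ are all correct.

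There is, however, one real gap: you say ``pick \emph{any} nonzero $\varphi\in C_c^\infty(\CalO)$'' and then claim $\Phi>0$ a.e.\ by an ergodicity heuristic and a reference to \Cref{lem:DualOrbitUnion}. Neither works: the $H^T$-action on $\CalO$ is not assumed ergodic, and \Cref{lem:DualOrbitUnion} concerns unions of frequency supports, which is unrelated. Concretely, take $H=\R^+\cdot I_2$, $\CalO=\R^2\setminus\{0\}$, and $\varphi$ supported in a small ball around $(0,1)$; then $H^T\{\varphi>0\}$ is a thin cone about the positive $y$-axis, so $\Phi\equiv 0$ on a set of positive measure and the resulting $\widehat\psi$ fails the admissibility identity there. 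Your closing parenthetical about setting $\widehat\psi:=0$ off $\supp\varphi$ addresses smoothness but not this failure: on orbits where $\Phi=0$ the Calder\'on integral of $\widehat\psi$ is $0$, not $1$.

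The fix is immediate: choose $\varphi$ with $\varphi>0$ on the compact generating set $C$ (possible since $C\subseteq\CalO$ is compact and $\CalO$ is open). Then $\{\Phi>0\}\supseteq H^T\{\varphi>0\}\supseteq H^TC=\CalO$, so $\Phi$ is strictly positive everywhere on $\CalO$, and the remainder of your argument goes through verbatim.
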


\subsection{Coorbit spaces}
\label{sec:CoorbitSpaces}

For defining wavelet coorbit spaces defined by integrably admissible dilation groups,
we follow the concrete approach in \cite{FuehrVelthoven};
see also \cite{velthoven2022coorbit} for an abstract approach for general (possibly reducible) integrable group representations. 

Let $H \leq \mathrm{GL}(d, \R)$ be integrably admissible with essential frequency support $\CalO$.
Denoting by $\mathcal{F}^{-1}$ the inverse Fourier transform,
define the space $\Schwartz_{\CalO} := \mathcal{F}^{-1} (C_c^{\infty} (\CalO))$,
and equip it with the topology making
$\mathcal{F}^{-1} : C_c^{\infty} (\CalO) \to \mathcal{S}_{\CalO}$ into a homeomorphism,
with respect to the usual topology on $C_c^{\infty} (\CalO)$.
The anti-dual space of $\Schwartz_{\CalO}$, that is,
the space of all conjugate-linear continuous functionals on $\Schwartz_{\CalO}$,
is denoted by $\Schwartz_{\CalO}^*$, and equipped with the weak$^*$-topology.
We write $\langle \cdot, \cdot \rangle$ for the \emph{sesquilinear} pairing
between $\Schwartz^*_{\CalO}$ and $\Schwartz_{\CalO}$,
that is, $\langle f, \varphi \rangle := f(\varphi)$ for $f \in \Schwartz_{\CalO}^\ast$
and $\varphi \in \Schwartz_{\CalO}$.
To be consistent with \cite{FuehrVelthoven},
we define the Fourier transform $\widehat{f}$ of $f \in \Schwartz_{\CalO}^\ast$
as
\[
  \widehat{f} (\varphi ) := \langle f, \overline{\widehat{\varphi}\,} \, \rangle 
  \qquad \text{for } \varphi \in C_c^\infty(\CalO)
  .
\]
Note that $\widehat{f}$ forms a continuous linear functional on $C_c^{\infty} (\CalO)$ as
\(
  \Fourier \overline{\widehat{\varphi}\,}
  = \Fourier \Fourier^{-1} \overline{\varphi}
  = \overline{\varphi}
  \in C_c^\infty(\CalO),
\)
and thus $\overline{\widehat{\varphi}\,} \in \Schwartz_{\CalO}$ for $\varphi \in C_c^\infty (\CalO)$.

To define the coorbit spaces, fix an admissible vector $\psi \in \Schwartz_{\CalO}$ (see \Cref{prop:CcInftyAdmissibleVectorExists}).
Then also
$\pi(x,h) \psi \in \Schwartz_{\CalO}$ for $(x,h) \in \R^d \rtimes H$ (see \cite[Lemma 2.9]{FuehrVelthoven}). As such, we can define the (extended) \emph{wavelet transform} of $f \in \Schwartz^*_{\CalO}$
 as $W_{\psi} f = \langle f, \pi (\cdot) \psi \rangle$. 
We note that $W_\psi f : \R^d \rtimes H \to \CC$ is (Borel) measurable.

For $p, q \in [1, \infty]$, we define the coorbit space $\Co(L^{p,q} (G))$
as the space of all $f \in \Schwartz^*_{\CalO}$ satisfying
\[
  \| f \|_{\Co(L^{p,q}(G))}
  := \| W_\psi f \|_{L^{p,q}(G)}
  := \bigg(
       \int_H
         \big\| W_{\psi} f (\cdot, h) \big\|_{L^p}^q \;
       \frac{d\mu_H (h)}{|\det h|}
     \bigg)^{1/q}
  < \infty
\]
for $p \in [1, \infty]$, $q \in [1, \infty)$, and
\[
  \| f \|_{\Co(L^{p, \infty} (G))}
  := \| W_\psi f \|_{L^{p, \infty}(G)}
  := \esssup_{h \in H} \| W_{\psi} f (\cdot, h ) \big\|_{L^p}.
\]
The spaces $\Co(L^{p,q}(G))$ are  Banach spaces that are independent
of the chosen admissible defining vector $\psi \in \Schwartz_{\CalO}$,
cf.\ \cite[Proposition 3.3]{FuehrVelthoven}.
For $p=q=2$, we have $\Co(L^{p,q}(G)) = L^2 (\R^d)$,
up to canonical identifications,
which can be deduced from a combination of \cite[Proposition 2.19]{FuehrVelthoven}
and \cite[Lemma 4.13]{velthoven2022coorbit}.
In particular, by using \cite[Theorem 7.4]{velthoven2022coorbit}, this implies that
\begin{align}\label{eq:identification}
  \Co(L^{p,q}(G))
  = \big\{ f \in L^2 (\R^d) \,\,:\,\, \| W_{\psi} f \|_{L^{p,q}(G)} < \infty \big\},
\end{align}
up to canonical identifications, for $1 \leq p, q \leq 2$.
See \cite[Remark 2.11]{CoarseGeometryPaper} for further details.

The coorbit space $\Co(L^{p,q}(G))$ can alternatively be described by a Besov-type norm.
To be more explicit, by \cite[Theorem 5.5]{FuehrVelthoven},
the coorbit space norm $\| \cdot \|_{\Co(L^{p,q}(G))}$ is equivalent to the Besov-type norm
\[
  \| f \|_{\mathcal{D}(\CalQ, L^p, \ell^q_u)}
  := \bigg\|
       \Big(
         u_i
         \cdot \big\|
                 \mathcal{F}^{-1} \bigl(\varphi_i \cdot \widehat{f} \,\,\bigr)
               \big\|_{L^p}
       \Big)_{i \in I}
     \bigg\|_{\ell^q}
  \qquad \text{for } f \in \Schwartz_{\CalO}^\ast ,
\]
where $\CalQ = (h_i^{-T} Q)_{i \in I}$ is an induced cover of $\CalO$,
$(\varphi_i)_{i \in I}$ is an adequate associated partition of unity,
and $u_i = |\det(h_i)|^{\frac{1}{2} - \frac{1}{p}}$ for $i \in I$.

\subsection{Equivalence of dilation groups}

The following definition is the central notion for the comparison of coorbit spaces. This definition is a natural extension of \cite[Definition 2.16]{CoarseGeometryPaper}.

\begin{definition}\label{defn:coorbit_equivalent}
  Let $H_{1}, H_{2} \le \GL(d, \mathbb{R})$ denote integrably admissible matrix groups.
  We call $H_{1},H_{2}$ \textit{coorbit equivalent} if, for all
  $1 \leq p,q \le \infty $ and for all $f \in L^2  (\mathbb{R}^{d})$, the norm equivalence
  \begin{equation*}
    \| f \|_{ \Co(L^{p,q}(\mathbb{R}^{d} \rtimes H_{1}))}
    \asymp \| f \|_{\Co(L^{p,q}(\mathbb{R}^{d} \rtimes H_{2}))}
  \end{equation*}
  holds.
  Here the norm equivalence is understood in the generalized sense that one
  side is infinite if and only if the other side is.
\end{definition}

We then immediately get the following analog of results from \cite[Theorem 2.18]{CoarseGeometryPaper}.
Note that the condition $\mathcal{O}_1 = \mathcal{O}_2$ emphasizes the importance
of Theorem \ref{thm:DualOrbitUniqueness},
stating that there is at most one essential frequency support for a given dilation group. 

\begin{theorem}\label{thm:coorbit_equiv_dual_orbits}
  Let $H_{1}, H_{2} \le \GL(d, \mathbb{R})$ denote integrably admissible matrix groups,
  and let $\mathcal{O}_{1},\mathcal{O}_{2}$ denote the associated essential frequency supports.
  Then the following are equivalent:
  \begin{enumerate}[label=(\roman*)]
  \item $H_{1}$ and $H_{2}$ are coorbit equivalent in the sense of \Cref{defn:coorbit_equivalent}.

  \item $Co(L^{p,q}(\mathbb{R}^{d} \rtimes H_{1})) = Co(L^{p,q}(\mathbb{R}^{d} \rtimes H_{2}))$
        as subspaces of $L^{2}(\mathbb{R}^{d})$, for all $1 \le p,q \le 2$;

  \item $Co(L^{p,q}(\mathbb{R}^{d} \rtimes H_{1})) = Co(L^{p,q}(\mathbb{R}^{d} \rtimes H_{2}))$
        as subspaces of $L^{2}(\mathbb{R}^{d})$, for some $1 \le p,q \le 2$ with $(p,q) \neq (2,2)$;

  \item $\mathcal{O}_{1} = \mathcal{O}_{2}$, and the covers induced
        by $H_{1}$ and $H_{2}$ on the common essential frequency support are weakly equivalent.
  \end{enumerate}
\end{theorem}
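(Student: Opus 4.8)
The plan is to establish the cycle $(i)\Rightarrow(ii)\Rightarrow(iii)\Rightarrow(iv)\Rightarrow(i)$. Two of these are essentially formal. For $(i)\Rightarrow(ii)$, coorbit equivalence furnishes $\|f\|_{\Co(L^{p,q}(\R^d\rtimes H_1))}\asymp\|f\|_{\Co(L^{p,q}(\R^d\rtimes H_2))}$ for all $f\in L^2(\R^d)$, in particular for $1\le p,q\le 2$; since, by the identification \eqref{eq:identification}, $\Co(L^{p,q}(\R^d\rtimes H_j))$ is exactly the set of $f\in L^2(\R^d)$ for which this norm is finite, comparability of the norms forces the two subspaces to coincide. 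The implication $(ii)\Rightarrow(iii)$ is trivial, e.g.\ by specializing to $(p,q)=(1,1)$.

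The remaining two implications both run through the Besov-type (decomposition space) description recalled in \Cref{sec:CoorbitSpaces}. Choosing, for $j\in\{1,2\}$, an induced cover $\CalQ_j=((h_i^{(j)})^{-T}Q_j)_{i\in I_j}$ of $\CalO_j$ as provided by \Cref{lem:NicecoverExists} together with an associated partition of unity, \cite[Theorem~5.5]{FuehrVelthoven} gives $\|f\|_{\Co(L^{p,q}(\R^d\rtimes H_j))}\asymp\|f\|_{\mathcal{D}(\CalQ_j,L^p,\ell^q_{u^{(j)}})}$ with weights $u^{(j)}_i=|\det h_i^{(j)}|^{1/2-1/p}$, valid for $f$ in the reservoir $\Schwartz_{\CalO_j}^{\ast}$ (which contains $L^2(\R^d)$). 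The key structural observation is that these weights are \emph{adapted} to the covers: whenever two cover elements — among $\CalQ_1$, $\CalQ_2$, or one of each — intersect, property~(iii) of \Cref{lem:NicecoverExists} and the compactness of sets of the form $((\overline{Q_1},\overline{Q_2}))$ (\Cref{cor:basic_integrably}) force the corresponding Jacobians, hence the weights, to be comparable. Consequently the relevant input from \cite{VoigtlaenderEmbeddingsOfDecompositionSpaces} is its classification of two-sided embeddings between decomposition spaces over structured covers carrying adapted weights; combined with the observation that a decomposition space over $\CalO_1$ cannot coincide with one over $\CalO_2$ unless $\CalO_1=\CalO_2$ (cf.\ the mechanism of \Cref{lem:NonIntegrability}), this yields: $\mathcal{D}(\CalQ_1,L^p,\ell^q_{u^{(1)}})=\mathcal{D}(\CalQ_2,L^p,\ell^q_{u^{(2)}})$ precisely when $\CalO_1=\CalO_2$ and the covers $\CalQ_1,\CalQ_2$ are weakly equivalent.

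Granting this, $(iv)\Rightarrow(i)$ is immediate: if $\CalO_1=\CalO_2=:\CalO$ and $\CalQ_1$, $\CalQ_2$ are weakly equivalent (a $p,q$-independent condition), the classification gives $\mathcal{D}(\CalQ_1,L^p,\ell^q_{u^{(1)}})=\mathcal{D}(\CalQ_2,L^p,\ell^q_{u^{(2)}})$ with equivalent norms for all $1\le p,q\le\infty$, and the Besov description transports this to $\|f\|_{\Co(L^{p,q}(\R^d\rtimes H_1))}\asymp\|f\|_{\Co(L^{p,q}(\R^d\rtimes H_2))}$ for all $1\le p,q\le\infty$ and all $f\in L^2(\R^d)\subseteq\Schwartz_{\CalO}^{\ast}$, which is exactly coorbit equivalence. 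For $(iii)\Rightarrow(iv)$, fix $(p,q)\neq(2,2)$ with $1\le p,q\le 2$ for which the coorbit spaces coincide as subspaces of $L^2(\R^d)$; for such exponents \eqref{eq:identification} shows $\mathcal{D}(\CalQ_j,L^p,\ell^q_{u^{(j)}})=\Co(L^{p,q}(\R^d\rtimes H_j))$ is entirely contained in $L^2(\R^d)$, so coincidence as subsets of $L^2(\R^d)$ is coincidence of the two decomposition spaces, whence the classification gives $\CalO_1=\CalO_2$ together with weak equivalence of $\CalQ_1,\CalQ_2$, i.e.\ $(iv)$. Throughout, \Cref{thm:DualOrbitUniqueness} is what guarantees the frequency supports $\CalO_1,\CalO_2$ — and hence the formulation of $(iv)$ — are unambiguous.

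The step I expect to require the most care is the assertion, sitting inside $(iii)\Rightarrow(iv)$, that coincidence of the two spaces merely \emph{as subsets of} $L^2(\R^d)$ forces $\CalO_1=\CalO_2$: a priori each $\Co(L^{p,q}(\R^d\rtimes H_j))$ is built over a different reservoir $\Schwartz_{\CalO_j}^{\ast}$, and one must exclude $\CalO_1\subsetneq\CalO_2$ with the two $L^2$-traces still agreeing. The mechanism is the one exploited in \Cref{lem:NonIntegrability}: if $\xi_0\in\CalO_2\setminus\CalO_1$, then $\xi_0\in\partial\CalO_1$, so $\CalQ_1$ accumulates infinitely many members near $\xi_0$ while $\CalQ_2$ covers a neighborhood of $\xi_0$ with only finitely many members (by defining condition \ref{enu:FrequencySupportProperness}); this already defeats weak equivalence near $\xi_0$, and more concretely one can build $f\in L^2(\R^d)$ with Fourier transform supported near $\xi_0$, distributed over the $\CalQ_1$-members accumulating there with coefficients tuned to the given $(p,q)$, so that its $\CalQ_2$-decomposition norm (an $\ell^q$-sum over boundedly many members, hence controlled by an $L^p$-norm of a single lump) is finite while its $\CalQ_1$-decomposition norm (an $\ell^q$-sum over infinitely many members) diverges, contradicting $(iii)$. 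Making this robust uniformly in the boundary point and in $(p,q)\in[1,2]^2\setminus\{(2,2)\}$ — or, equivalently, extracting it directly from the embedding criterion of \cite{VoigtlaenderEmbeddingsOfDecompositionSpaces} applied to covers of distinct base sets — is the technical heart of the proof.
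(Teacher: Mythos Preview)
Your overall strategy matches the paper's: both argue the cycle $(i)\Rightarrow(ii)\Rightarrow(iii)\Rightarrow(iv)\Rightarrow(i)$ via the decomposition-space identification from \cite[Theorem~5.5]{FuehrVelthoven} and then the rigidity results of \cite{VoigtlaenderEmbeddingsOfDecompositionSpaces}. The paper handles $\CalO_1=\CalO_2$ in $(iii)\Rightarrow(iv)$ by directly invoking \cite[Theorem~6.9\,$\tfrac12$]{VoigtlaenderEmbeddingsOfDecompositionSpaces} (which forces $p=q=2$ whenever $\CalO_1\cap\partial\CalO_2\neq\emptyset$) rather than the hands-on construction you sketch; you already flag this alternative, so that part is fine.

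There is, however, one genuine gap in your $(iv)\Rightarrow(i)$ step. You claim that for cover elements from \emph{different} groups, intersection forces comparable determinants, citing \Cref{cor:basic_integrably} on the compactness of $((\overline{Q_1},\overline{Q_2}))$. But that set is by definition $\{h\in H: h^T\overline{Q_1}\cap\overline{Q_2}\neq\emptyset\}$ for a \emph{single} group $H$, whereas the condition $(h_i^{(1)})^{-T}Q_1\cap(g_j^{(2)})^{-T}Q_2\neq\emptyset$ involves the mixed product $(h_i^{(1)})^{-1}g_j^{(2)}$, which lives in neither $H_1$ nor $H_2$. So the corollary does not apply, and the determinant comparison does not follow from group-theoretic compactness. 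The paper closes this gap with \cite[Lemma~2.8]{CoarseGeometryPaper}, which compares $|\det h_i^{(1)}|$ and $|\det g_j^{(2)}|$ directly from the \emph{structured} nature of the covers (essentially: intersecting affine images $T_i Q_1$ and $S_j Q_2$ of fixed sets force $\|T_i^{-1}S_j\|$, hence the determinant ratio, to be bounded). Once that weight comparability is in hand, \cite[Lemma~6.11]{VoigtlaenderEmbeddingsOfDecompositionSpaces} gives the decomposition-space equality for all $p,q$, and the rest of your argument goes through.
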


\begin{proof}
The proof is essentially the same as that of \cite[Theorem 2.18]{CoarseGeometryPaper},
and hence we only sketch it and provide the relevant references
for integrably admissible dilation groups.

The fact that (i) implies (ii) follows directly by the identification in \Cref{eq:identification}.
That (ii) implies (iii) is trivial.

Assume assertion (iii) holds.
Then, by \cite[Theorem 5.5]{FuehrVelthoven}, also the Besov-type decomposition spaces
$\mathcal{D}(\CalQ, L^p, \ell^q_u)$ and $\mathcal{D}(\CalP, L^p, \ell^q_{u'})$
associated to the two covers $\CalQ = (h_i^{-T} Q)_{i \in I}$
and $\CalP = (g_j^{-T} P)_{j \in J}$ induced by, respectively, $H_1$ and $H_2$,
and with weights $u_i = |\det (h_i)|^{\frac{1}{2} - \frac{1}{p}}$
and $u_j ' = |\det (g_j)|^{\frac{1}{2} - \frac{1}{p}}$, coincide.
In particular, this implies that the two norms associated to the decomposition spaces
are equivalent on $C_c^\infty (\CalO_1 \cap \CalO_2)$.

Since $\CalO_1, \CalO_2$ are both of full measure and hence dense in $\R^d$, it follows that also $\CalO_1 \cap \CalO_2 \subseteq \R^d$
is dense, and hence unbounded.
Now, if the condition $\CalO_1 \cap \partial \CalO_2 \neq \emptyset$
or the condition $\CalO_2 \cap \partial \CalO_1 \neq \emptyset$ was satisfied,
then, since we know from above that the norms on the two
decomposition spaces from above are equivalent,
\cite[Theorem 6.9 $\frac{1}{2}$]{VoigtlaenderEmbeddingsOfDecompositionSpaces}
would imply $p = q = 2$ and thus provide a contradiction.
Hence, we get that $\CalO_1 \cap \partial \CalO_2 = \emptyset$
and $\CalO_2 \cap \partial \CalO_1 = \emptyset$.
Note that $\partial \CalO_i = \R^d \setminus \CalO_i$, since $\CalO_i \subseteq \R^d$
is open and dense.
Therefore, we conclude that $\CalO_1 \subseteq \CalO_2$ and $\CalO_2 \subseteq \CalO_1$
and hence $\CalO_1 = \CalO_2$, as claimed.
The fact that the covers $\CalQ$ and $\CalP$ are weakly equivalent
follows from \cite[Theorem 6.9]{VoigtlaenderEmbeddingsOfDecompositionSpaces}
since $\mathcal{D}(\CalQ, L^p, \ell^q_u) = \mathcal{D}(\CalP, L^p, \ell^q_{u'})$
for some $(p,q) \neq (2,2)$.
In combination, this shows that (iv) holds.

Lastly, assume that $\mathcal{O}_{1} = \mathcal{O}_{2}$ and that the covers
$\CalQ = (h_i^{-T} Q)_{i \in I}$ and $\CalP = (g_j^{-T} P)_{j \in J}$ induced
by, respectively, $H_{1}$ and $H_{2}$ are weakly equivalent.
Then an application of \cite[Lemma 2.8]{CoarseGeometryPaper}
implies that
$|\det (h_i) |^{-1} \asymp |\det (g_j) |^{-1} $
for all $i \in I$ and $j \in J$ satisfying $h_i^{-T} Q \cap g_j^{-T} P \neq \emptyset$.
In turn, this implies that the weights
$u_i := |\det (h_i)|^{\frac{1}{2} - \frac{1}{p}}$ and $u_j ' := |\det (g_j)|^{\frac{1}{2} - \frac{1}{p}}$
satisfy $ u_i \asymp u_j'$ for all $i \in I$ and $j \in J$
satisfying $h_i^{-T} Q \cap g_j^{-T} P \neq \emptyset$. 
Together with the fact that $\CalQ$ and $\CalP$ are weakly equivalent,
this shows that the hypotheses of \cite[Lemma 6.11]{VoigtlaenderEmbeddingsOfDecompositionSpaces}
are satisfied, which then implies that 
$\mathcal{D}(\CalQ, L^p, \ell^q_u) = \mathcal{D}(\CalP, L^p, \ell^q_{u'})$
for all $1 \le p,q \leq \infty$.
Hence, by \cite[Theorem 5.5]{FuehrVelthoven}, it follows that
$\| \cdot \|_{\Co(L^{p,q} (\R^d \rtimes H_1))} \asymp \| \cdot \|_{\Co(L^{p,q} (\R^d \rtimes H_2))}$.
\end{proof}

We can now formulate our general criterion for coorbit equivalence of dilation groups. The next theorem is the main result of this section.

\begin{theorem}\label{thm:char_coorbit_equiv_new}
 Let $H_1, H_2 \leq \mathrm{GL}(d, \R)$ be connectivity-respecting dilation groups
 with essential frequency supports $\CalO_1 = H_1^T C_1$ and  $\CalO_2 = H^T_2 C_2$.
 With notation as in \Cref{notation},  let
  \[
    p^{(1)} : 
    (H_1 \times C_1, d_{H_1 \times C_1}) \to (\CalO_1, d_{\CalQ}), \quad
    (h, \xi) \mapsto h^{-T} \xi
  \]
  and
  \[
    p^{(2)} : 
    (H_2 \times C_2, d_{H_2 \times C_2}) \to (\CalO_2, d_{\CalP}), \quad
    (g, \eta) \mapsto g^{-T} \eta
    ,
  \]
  be full orbit maps and let $p^{(2)}_* : \CalO_2 \to H_2 \times C_2$ be a right inverse for $p^{(2)}$.
  
  Then the following assertions are equivalent:
  \begin{enumerate}[label=(\roman*)]
      \item $H_1$ and $H_2$ are coorbit equivalent;

      \item $\CalO := \mathcal{O}_1 = \mathcal{O}_2$, and the map
                    \[
                      p^{(2)}_* \circ p^{(1)} : 
                      (H_1 \times C_1, d_{H_1 \times C_1})
                      \to (H_2 \times C_2, d_{H_2 \times C_2}) 
                    \]
                    is a quasi-isometry.
  \end{enumerate}
\end{theorem}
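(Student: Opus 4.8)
The plan is to deduce the theorem from the three earlier structural results \Cref{thm:coorbit_equiv_dual_orbits}, \Cref{thm:WeakEquivQuasiIso} and \Cref{thm:FullOrbitMapIsQuasiIsometry}, together with the elementary facts about quasi-isometries and quasi-inverses recorded in \Cref{sec:quasi-isometry}. First I would use \Cref{thm:coorbit_equiv_dual_orbits} to replace assertion (i) by the equivalent statement that $\CalO_1 = \CalO_2 =: \CalO$ and that the induced covers $\CalQ$ (coming from $H_1$) and $\CalP$ (coming from $H_2$) are weakly equivalent; here I would note in passing that by \Cref{lem:coverChoiceDoesNotMatter} weak equivalence does not depend on which induced covers are chosen, so it is legitimate to work with the specific $\CalQ$ and $\CalP$ fixed in \Cref{notation}. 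Next, assuming $\CalO_1 = \CalO_2 = \CalO$, \Cref{thm:WeakEquivQuasiIso} converts ``weak equivalence of $\CalQ$ and $\CalP$'' into ``the identity $\mathrm{id}_\CalO : (\CalO, d_\CalQ) \to (\CalO, d_\CalP)$ is a quasi-isometry'', its hypotheses being satisfied because $Q, P$ are open, connected and relatively compact in $\CalO$ and $\CalQ, \CalP$ cover $\CalO$, by \Cref{notation} and \Cref{lem:NicecoverExists}. After these reductions the theorem comes down to proving, for $\CalO_1 = \CalO_2 = \CalO$, that $\mathrm{id}_\CalO$ is a quasi-isometry if and only if $p^{(2)}_* \circ p^{(1)}$ is.

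For the ``only if'' direction I would factor $p^{(2)}_* \circ p^{(1)} = p^{(2)}_* \circ \mathrm{id}_\CalO \circ p^{(1)}$ and check that all three factors are quasi-isometries: $p^{(1)} : (H_1 \times C_1, d_{H_1 \times C_1}) \to (\CalO, d_\CalQ)$ by \Cref{thm:FullOrbitMapIsQuasiIsometry}; $\mathrm{id}_\CalO : (\CalO, d_\CalQ) \to (\CalO, d_\CalP)$ by assumption; and $p^{(2)}_* : (\CalO, d_\CalP) \to (H_2 \times C_2, d_{H_2 \times C_2})$ by \Cref{lem:inv_coarse_isom}(i), since $p^{(2)}_*$ is a right inverse, hence a quasi-inverse (with defining supremum $0$), of the quasi-isometry $p^{(2)} : (H_2 \times C_2, d_{H_2 \times C_2}) \to (\CalO, d_\CalP)$ from \Cref{thm:FullOrbitMapIsQuasiIsometry}. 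Since compositions of quasi-isometries are quasi-isometries, $p^{(2)}_* \circ p^{(1)}$ is a quasi-isometry.

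For the ``if'' direction I would start from the hypothesis that $p^{(2)}_* \circ p^{(1)}$ is a quasi-isometry and post-compose it with the quasi-isometry $p^{(2)} : (H_2 \times C_2, d_{H_2 \times C_2}) \to (\CalO, d_\CalP)$; because $p^{(2)} \circ p^{(2)}_* = \mathrm{id}_\CalO$ as a set map, this composition is literally $p^{(1)}$, so one concludes that $p^{(1)} : (H_1 \times C_1, d_{H_1 \times C_1}) \to (\CalO, d_\CalP)$ is a quasi-isometry --- the same set map as before, but now with $d_\CalP$ on the target. Then I would pick a right inverse $p^{(1)}_* : \CalO \to H_1 \times C_1$ of $p^{(1)}$ (possible since $p^{(1)}$ is surjective by \Cref{thm:FullOrbitMapIsQuasiIsometry}). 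Because $p^{(1)} \circ p^{(1)}_* = \mathrm{id}_\CalO$ exactly, $p^{(1)}_*$ is a quasi-inverse of $p^{(1)}$ no matter whether $\CalO$ is equipped with $d_\CalQ$ or $d_\CalP$; invoking \Cref{lem:inv_coarse_isom}(i) with the $d_\CalQ$-metrization of the target of $p^{(1)}$ shows that $p^{(1)}_* : (\CalO, d_\CalQ) \to (H_1 \times C_1, d_{H_1 \times C_1})$ is a quasi-isometry. Composing it with the quasi-isometry $p^{(1)} : (H_1 \times C_1, d_{H_1 \times C_1}) \to (\CalO, d_\CalP)$ then yields that $\mathrm{id}_\CalO = p^{(1)} \circ p^{(1)}_*$ is a quasi-isometry from $(\CalO, d_\CalQ)$ to $(\CalO, d_\CalP)$, as required.

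I expect the only real subtlety to sit in the ``if'' direction: one must not slip into comparing $d_\CalQ$- and $d_\CalP$-distances before it is known that $\mathrm{id}_\CalO$ is a quasi-isometry. The mechanism that avoids this is the use of an \emph{exact} right inverse of $p^{(1)}$, which qualifies as a quasi-inverse with respect to every metric on $\CalO$ simultaneously; this lets one transport the quasi-isometry property from $p^{(1)}$ (with either target metric) to $p^{(1)}_*$ without any metric comparison, and the comparison between $d_\CalQ$ and $d_\CalP$ only materializes at the final composition, which is exactly the conclusion being sought.
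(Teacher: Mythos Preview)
Your proposal is correct and follows essentially the same route as the paper: both directions rest on \Cref{thm:coorbit_equiv_dual_orbits}, \Cref{thm:WeakEquivQuasiIso}, \Cref{thm:FullOrbitMapIsQuasiIsometry} and \Cref{lem:inv_coarse_isom}, and your factorization in the ``if'' direction is exactly the paper's $\mathrm{id}_\CalO = p^{(2)} \circ (p^{(2)}_* \circ p^{(1)}) \circ p^{(1)}_*$, just split into two steps. Your remark that an exact right inverse is a quasi-inverse with respect to any metric on the target is a nice clarification that the paper leaves implicit.
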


\begin{proof}
  Suppose that (i) holds. 
  Since $H_1$ and  $H_2$ are coorbit equivalent, an application of \Cref{thm:coorbit_equiv_dual_orbits}
  shows that we have $\CalO_1 = \CalO_2 =: \CalO$ and that the covers
  $\CalQ$ associated to $H_1$ and $\CalP$ associated to $H_2$
  (chosen as in \Cref{notation})
  are weakly equivalent.
  Therefore, \Cref{thm:WeakEquivQuasiIso} shows that the identity map
  $\mathrm{id}_{\CalO} : (\CalO, d_{\CalQ}) \to (\CalO, d_{\CalP})$
  is a quasi-isometry.
  Next, \Cref{thm:FullOrbitMapIsQuasiIsometry} shows that the full orbit maps
  $p^{(1)}$ and $p^{(2)}$ from the statement of the current theorem are quasi-isometries.
  Since $p^{(2)}_\ast$ is a right inverse to $p^{(2)}$, it is easy to see that it is also a
  quasi-inverse (see \Cref{sec:quasi-isometry}),
  and thus \Cref{lem:inv_coarse_isom} shows that
  $p^{(2)}_\ast : (\CalO, d_{\CalP}) \to (H_2 \times C_2, d_{H_2\times C_2})$
  is a quasi-isometry as well.
  Since compositions of quasi-isometries are again quasi-isometries,
  this finally implies that
  \[
   p^{(2)}_\ast \circ p^{(1)} = p^{(2)}_\ast \circ \mathrm{id}_{\CalO} \circ p^{(1)}
  \]
  is a quasi-isometry, as required.

  \medskip{}

  Conversely, suppose that (ii) holds.
  \Cref{thm:FullOrbitMapIsQuasiIsometry} shows that the full orbit maps
  $p^{(1)}$ and $p^{(2)}$ from the statement of the current theorem are quasi-isometries.
  Since $p^{(2)}_\ast$ is a right inverse to $p^{(2)}$, it is easy to see that it is also a
  quasi-inverse (see \Cref{sec:quasi-isometry}),
  and thus \Cref{lem:inv_coarse_isom} shows that
  $p^{(2)}_\ast : (\CalO, d_{\CalP}) \to (H_2 \times C_2, d_{H_2} \times C_2)$
  is a quasi-isometry as well.

  Similarly, letting $p^{(1)}_\ast : \CalO \to H_1 \times C_1$ be any right inverse to $p^{(1)}$,
  we also see that $p^{(1)}_\ast : (\CalO,d_{\CalQ}) \to (H_1 \times C_1, d_{H_1 \times C_1})$
  is a quasi-isometry.

  Since compositions of quasi-isometries are again quasi-isometries, we thus see that
  the identity map $\mathrm{id}_{\CalO} : (\CalO, d_\CalQ) \to (\CalO, d_{\CalP})$
  is a quasi-isometry, since
  \[
    \mathrm{id}_{\CalO}
    = p^{(2)} \circ (p^{(2)}_{\ast} \circ p^{(1)}) \circ p^{(1)}_{\ast}
    .
  \]
  Hence, \Cref{thm:WeakEquivQuasiIso} shows that the covers $\CalQ,\CalP$ are weakly equivalent,
  so that \Cref{thm:coorbit_equiv_dual_orbits} implies that $H_1$ and $H_2$
  are coorbit equivalent.
\end{proof}

The following necessary condition of coorbit equivalence is what actually will be used in most of our applications.
Its proof is similar to the first part of the proof of \Cref{thm:char_coorbit_equiv_new}
(using \Cref{cor:orbitmap_quasi} instead of \Cref{thm:FullOrbitMapIsQuasiIsometry}), and hence we skip it.

\begin{corollary} \label{cor:coorbitequivalence}
    Let $H_1, H_2 \leq \mathrm{GL}(d, \R)$ be connectivity-respecting
    with essential frequency supports $\CalO_1 = H_1^T C_1$ and  $\CalO_2 = H^T_2 C_2$ for compact,
    connected sets $C_1 \subseteq \CalO_1$ and $C_2 \subseteq \CalO_2$.
    Let $(\CalO_1)_0$ and $(\CalO_2)_0$ be the connected components containing $C_1$ and $C_2$, respectively.

    With notation as in \Cref{notation}, if $H_1$ and $H_2$ are coorbit equivalent,
    then $\CalO := \CalO_1 = \CalO_2$ and, for each $\xi \in (\CalO_1)_0$ and $\eta \in (\CalO_2)_0$, the transition map 
    \[
      (p_{\eta}^{H_2})_\ast \circ p_\xi^{H_1} : 
      (H_1, d_{W_1}) \to (H_2, d_{W_2})
    \]
    is a quasi-isometry, where $(p_{\eta}^{H_2})_\ast$ is a quasi-inverse for 
    $p_{\eta}^{H_2} : (H_2, d_{W_2}) \to (\CalO, d_{\CalP})$.
\end{corollary}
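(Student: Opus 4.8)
The plan is to follow the first implication in the proof of \Cref{thm:char_coorbit_equiv_new} almost verbatim, the only change being that the full orbit maps of \Cref{thm:FullOrbitMapIsQuasiIsometry} are replaced by the single-point orbit maps supplied by \Cref{cor:orbitmap_quasi}. Thus, assuming $H_1$ and $H_2$ are coorbit equivalent, I would first apply \Cref{thm:coorbit_equiv_dual_orbits} to deduce that $\CalO_1 = \CalO_2 =: \CalO$ and that the induced covers $\CalQ$ of $H_1$ and $\CalP$ of $H_2$, chosen as in \Cref{notation}, are weakly equivalent. Since these covers are of the form $(h_i^{-T} Q)_{i \in I}$ and $(g_j^{-T} P)_{j \in J}$ with $Q, P \subseteq \CalO$ open, connected, and relatively compact in $\CalO$, and with well-spread index families, \Cref{thm:WeakEquivQuasiIso} applies and shows that $\mathrm{id}_{\CalO} : (\CalO, d_{\CalQ}) \to (\CalO, d_{\CalP})$ is a quasi-isometry.

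Next, I would fix $\xi \in (\CalO_1)_0$ and $\eta \in (\CalO_2)_0$ and invoke \Cref{cor:orbitmap_quasi} to see that $p_\xi^{H_1} : (H_1, d_{W_1}) \to (\CalO, d_{\CalQ})$ and $p_\eta^{H_2} : (H_2, d_{W_2}) \to (\CalO, d_{\CalP})$ are quasi-isometries. In particular $p_\eta^{H_2}$ satisfies condition \ref{enu:QuasiIsometryCondition2}, hence admits a quasi-inverse $(p_\eta^{H_2})_\ast$; by \Cref{lem:inv_coarse_isom}(i) this quasi-inverse is itself a quasi-isometry from $(\CalO, d_{\CalP})$ to $(H_2, d_{W_2})$. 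Since a composition of quasi-isometries is again a quasi-isometry, it then follows that
\[
  (p_\eta^{H_2})_\ast \circ p_\xi^{H_1}
  = (p_\eta^{H_2})_\ast \circ \mathrm{id}_{\CalO} \circ p_\xi^{H_1}
\]
is a quasi-isometry from $(H_1, d_{W_1})$ to $(H_2, d_{W_2})$, which is the asserted conclusion.

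I do not expect a serious obstacle here; the argument is a pure composition of previously established facts, which is precisely why the authors defer it. The one point I would be careful about is that the statement asserts the quasi-isometry property for \emph{every} basepoint in the connected components $(\CalO_1)_0$ and $(\CalO_2)_0$, not merely for points of $C_1$ and $C_2$ — but exactly this uniformity is what \Cref{cor:orbitmap_quasi} already provides (via the enlargement argument through \Cref{lem:NicecoverExists} and \Cref{lem:coverChoiceDoesNotMatter}), so no extra work is required. Secondarily, one should double-check that the covers produced in \Cref{notation} genuinely meet the open/connected/relatively-compact-closure and well-spread hypotheses of \Cref{thm:WeakEquivQuasiIso}; this is immediate from \Cref{lem:NicecoverExists}.
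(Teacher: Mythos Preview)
Your proposal is correct and matches the paper's own approach exactly: the authors explicitly state that the proof is obtained by rerunning the first implication of \Cref{thm:char_coorbit_equiv_new} with \Cref{cor:orbitmap_quasi} in place of \Cref{thm:FullOrbitMapIsQuasiIsometry}, which is precisely what you do. Your additional remarks about the basepoints ranging over the full connected components and about the hypotheses of \Cref{thm:WeakEquivQuasiIso} being met via \Cref{lem:NicecoverExists} are accurate and address the only potential subtleties.
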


The necessary condition for coorbit equivalent dilation groups provided by \Cref{cor:coorbitequivalence}
resembles the characterization of coorbit equivalence for irreducibly admissible dilation groups proven in \cite{CoarseGeometryPaper}.
More precisely, \cite[Theorem 4.17]{CoarseGeometryPaper} shows that coorbit equivalence of irreducibly admissible dilation groups
can be characterized through the quasi-isometry property of a \emph{single} transition map. 
For reducible dilation groups, the quasi-isometry property of a single transition map does,
however, \emph{not} characterize coorbit equivalence;
see \Cref{ex:counterexample} below.

\subsection{Equivalence of subgroups}

In this subsection, we apply the results from the previous subsection to study the
coorbit equivalence of matrix groups $H_1 \subseteq H_2$.
We start with the following statement, which is contained in \cite[Lemma 6.4]{FuehrVelthoven}.

\begin{lemma}\label{lem:cocompact}
  Let $H_1 \leq H_2$ be two closed matrix groups such that $H_2/H_1$ is compact.
  Assume that $H_2$ is integrably admissible with essential frequency support $\CalO$.
  Then $H_1$ is integrably admissible with frequency support $\CalO$, and coorbit equivalent to $H_2$. 
\end{lemma}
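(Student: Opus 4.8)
The plan is to verify the three defining conditions \ref{enu:FrequencySupportFullMeasure}--\ref{enu:FrequencySupportProperness} for $H_1$ with the same set $\CalO$, and then invoke \Cref{thm:coorbit_equiv_dual_orbits} to get coorbit equivalence. Since $\CalO$ is already known to be open and of full measure, condition \ref{enu:FrequencySupportFullMeasure} is immediate, and we note that $\CalO$ is $H_1^T$-invariant because $H_1 \leq H_2$ and $\CalO$ is $H_2^T$-invariant. The key point is condition \ref{enu:FrequencySupportCompactlyGenerated}: we must produce a compact $C_1 \subseteq \CalO$ with $H_1^T C_1 = \CalO$. For this, first take a compact $C \subseteq \CalO$ with $H_2^T C = \CalO$, and use compactness of $H_2/H_1$ to choose a compact set $L \subseteq H_2$ whose image under $H_2 \to H_2/H_1$ is all of $H_2/H_1$; equivalently $H_2 = H_1 L$. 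Then $\CalO = H_2^T C = (H_1 L)^T C = H_1^T (L^T C)$, so $C_1 := L^T C$ is compact (as the continuous image of the compact set $L \times C$ under $(h,\xi) \mapsto h^T \xi$) and contained in $\CalO$, giving \ref{enu:FrequencySupportCompactlyGenerated}.

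For condition \ref{enu:FrequencySupportProperness}, fix a compact $K \subseteq \CalO$ and consider
\[
  [K]_{H_1} := \{ (h,\xi) \in H_1 \times \CalO : (h^T \xi, \xi) \in K \times K \}.
\]
Since $H_2$ is integrably admissible, the corresponding set $[K]_{H_2} \subseteq H_2 \times \CalO$ is compact, and clearly $[K]_{H_1} = [K]_{H_2} \cap (H_1 \times \CalO)$. Because $H_1 \leq H_2$ is closed, $H_1 \times \CalO$ is closed in $H_2 \times \CalO$, so $[K]_{H_1}$ is a closed subset of the compact set $[K]_{H_2}$, hence compact. This establishes that $H_1$ is integrably admissible with essential frequency support $\CalO$ (and the uniqueness in \Cref{thm:DualOrbitUniqueness} confirms there is no other candidate). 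Alternatively, one may argue via \Cref{lem:integrably_characterization}: picking $C$ as in that lemma, the set $((C,C))_{H_1} = ((C,C))_{H_2} \cap H_1$ is the intersection of a relatively compact set with the closed subgroup $H_1$, hence relatively compact in $H_1$.

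It remains to deduce coorbit equivalence. Both $H_1$ and $H_2$ are integrably admissible with the same essential frequency support $\CalO$, so by \Cref{thm:coorbit_equiv_dual_orbits} it suffices to check that the covers of $\CalO$ induced by $H_1$ and $H_2$ are weakly equivalent. Write the $H_2$-induced cover as $\CalP = (g_j^{-T} P)_{j \in J}$ with $(g_j)_{j \in J}$ well-spread in $H_2$; refining by the compact set $L^{-1}$ (using $H_2 = H_1 L$, equivalently $H_2 = L^{-1} H_1$ after adjusting) one obtains a well-spread family $(h_i)_{i \in I}$ in $H_1$ that is cofinal for $\CalP$, so the $H_1$-induced cover is weakly equivalent to $\CalP$ by the uniform intersection bound of \Cref{lem:uniform_intersection} (as in \Cref{lem:coverChoiceDoesNotMatter}). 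Hence $H_1$ and $H_2$ are coorbit equivalent. The main obstacle in a fully detailed write-up is the bookkeeping in passing from a well-spread family in $H_2$ to one in $H_1$ and controlling the relevant intersection multiplicities uniformly; but since \Cref{lem:cocompact} is quoted from \cite[Lemma 6.4]{FuehrVelthoven}, it is legitimate to give this argument only in sketch form and refer there for the remaining technicalities.
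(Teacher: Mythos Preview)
The paper does not give its own proof of this lemma; it simply cites \cite[Lemma~6.4]{FuehrVelthoven}. So there is nothing to compare against, and your sketch stands on its own merits. Your approach is the natural one and is essentially correct.

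One small slip: if $L \subseteq H_2$ surjects onto the space of \emph{left} cosets $H_2/H_1$, the conclusion is $H_2 = L H_1$, not $H_2 = H_1 L$. With that correction the computation reads $\CalO = H_2^T C = (L H_1)^T C = H_1^T L^T C = H_1^T(L^T C)$, which is exactly the formula you wrote down, so the end result is unaffected.

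For the coorbit equivalence step, your sketch is a bit loose but the underlying idea is right. The cleanest way to make it precise is to observe that a well-spread family $(h_i)_{i \in I}$ in $H_1$ is automatically well-spread in $H_2$ (uniform discreteness is inherited; uniform density follows from $H_2 = H_1 L'$ for a compact $L'$, which holds since $H_1\backslash H_2$ is also compact). Then both induced covers live over the same group $H_2$, and the argument of \Cref{lem:coverChoiceDoesNotMatter} via \Cref{lem:uniform_intersection} applies verbatim to give weak equivalence. Your reference to \cite{FuehrVelthoven} for the remaining bookkeeping is entirely appropriate, since that is precisely where the paper itself defers.
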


A converse can now be provided via \Cref{cor:coorbitequivalence}, resulting in the following characterization:

\begin{corollary} \label{cor:cocompact}
  Let $H_1 \leq H_2 \leq \GL(d, \R)$ denote two connectivity-respecting integrably admissible dilation groups
  and such that $H_1 \subseteq H_2$.
  Then $H_1$ and $H_2$ are coorbit equivalent if and only if $H_2/H_1$ is compact. 
\end{corollary}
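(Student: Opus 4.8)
The plan is to establish both implications separately, with the "only if" direction being the new content. The "if" direction is precisely \Cref{lem:cocompact}, so nothing remains to be done there. For the "only if" direction, suppose $H_1$ and $H_2$ are coorbit equivalent. First I would invoke \Cref{cor:coorbitequivalence} (or \Cref{thm:coorbit_equiv_dual_orbits}) to conclude that $H_1$ and $H_2$ share a common essential frequency support $\CalO = \CalO_1 = \CalO_2$, and that for suitable base points $\xi \in (\CalO_1)_0$, $\eta \in (\CalO_2)_0$ and suitable generating sets $W_1 \subseteq (H_1)_0$, $W_2 \subseteq (H_2)_0$, the transition map $(p_\eta^{H_2})_\ast \circ p_\xi^{H_1} : (H_1, d_{W_1}) \to (H_2, d_{W_2})$ is a quasi-isometry. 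Since $H_1 \subseteq H_2$, the natural choice is to take $\xi = \eta$ a common base point in $(\CalO_1)_0 \cap (\CalO_2)_0$ (note $(\CalO_1)_0 = (\CalO_2)_0$ as components of the same open set $\CalO$), so that $p_\xi^{H_1}$ is simply the restriction of $p_\xi^{H_2}$ to $H_1$; composing with the quasi-inverse $(p_\xi^{H_2})_\ast$ of $p_\xi^{H_2}$, one sees that the transition map is (coarsely) just the inclusion map $\iota : (H_1, d_{W_1}) \hookrightarrow (H_2, d_{W_2})$. Thus the inclusion of $H_1$ into $H_2$ is a quasi-isometry with respect to word metrics coming from compactly generated structures.

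The next step is to translate this quasi-isometry statement into the assertion that $H_2/H_1$ is compact. The key point is the quasi-isometry condition \ref{enu:QuasiIsometryCondition2} applied to $\iota$: there exists $R_3 > 0$ such that every $g \in H_2$ lies within $d_{W_2}$-distance $R_3$ of some $h \in H_1$, i.e.\ $g \in h W_2^{R_3}$. Hence $H_2 = H_1 \cdot W_2^{R_3}$, and since $W_2$ is precompact, the set $K := \overline{W_2^{R_3}} \subseteq H_2$ is compact with $H_2 = H_1 K$. This means the quotient map $H_2 \to H_2/H_1$ sends the compact set $K$ onto all of $H_2/H_1$, so $H_2/H_1$ is compact (it is Hausdorff since $H_1$ is closed in $H_2$, being a closed matrix group).

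The main obstacle I anticipate is the careful bookkeeping in reducing the abstract transition map of \Cref{cor:coorbitequivalence} to the honest inclusion map $\iota$: one must check that choosing the \emph{same} base point $\xi$ for both groups is legitimate (which requires $\xi \in (\CalO_1)_0 \cap (\CalO_2)_0$, and here the identity $\CalO_1 = \CalO_2$ together with the fact that the connected component of $\CalO$ containing a fixed $\xi$ does not depend on which dilation group we consider makes this work), and that the composition $(p_\xi^{H_2})_\ast \circ (p_\xi^{H_2}|_{H_1})$ agrees with $\iota$ up to bounded error as a map $(H_1, d_{W_1}) \to (H_2, d_{W_2})$ — this follows from the definition of quasi-inverse together with the left-invariance of $d_{W_2}$. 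A secondary subtlety is that \Cref{cor:coorbitequivalence} is stated with possibly different generating sets $W_1, W_2$ for the respective component stabilizers; by \Cref{lem:DifferentWsAreCoarselyEquivalent} the choice of generating set only changes the metric up to quasi-isometry, so one may freely pick convenient $W_i$ (e.g.\ with $W_1 \subseteq W_2 \cap (H_1)_0$) without affecting the conclusion. Once these identifications are in place, the extraction of compactness of $H_2/H_1$ from condition \ref{enu:QuasiIsometryCondition2} is immediate.
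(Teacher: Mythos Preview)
Your proposal is correct and follows essentially the same route as the paper's proof: both recognize that, under the inclusion $H_1 \subseteq H_2$, the transition map $(p_\xi^{H_2})_\ast \circ p_\xi^{H_1}$ is within bounded $d_{W_2}$-distance of the inclusion $\iota$, and then extract cocompactness from condition~\ref{enu:QuasiIsometryCondition2}. One small point to sharpen: your claim that $(\CalO_1)_0 = (\CalO_2)_0$ ``as components of the same open set $\CalO$'' is not automatic, since these are by definition the components containing $C_1$ and $C_2$, respectively, which could \emph{a priori} differ; the paper handles this by first observing that $\CalO = H_1^T C_1 \subseteq H_2^T C_1 \subseteq \CalO$, so one may take a single $C$ (and hence a single $\CalO_0$) serving both groups, after which \Cref{lem:stabilizer_compactlygenerated} guarantees the relevant stabilizers are compactly generated.
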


\begin{proof}
The "if"-direction  holds by \Cref{lem:cocompact}.
For the converse, we assume that $H_1 \subseteq H_2$ are coorbit equivalent.
Then, we get that $\CalO_1 = \CalO_2 =: \CalO$ by \Cref{cor:coorbitequivalence},
where $\CalO_1 = H^T_1 C_1$ and $\CalO_2 = H^T_2 C_2$ denote the essential frequency supports of $H_1$ and $H_2$, respectively.
Since $H_1 \subseteq H_2$, we have $\CalO = H_1^T C_1 \subseteq H_2^T C_1 \subseteq H_2^T \CalO = \CalO$,
so we may fix a single compact, connected set $C \subseteq \CalO$ such that $\CalO = H_1^T C = H_2^T C$.
By \Cref{lem:stabilizer_compactlygenerated}, the stabilizer subgroups $(H_1)_0$ and $(H_2)_0$
of the connected component $\CalO_0$ of $\CalO$ containing $C$ is compactly generated.
For $i=1,2$, fix word metrics $d_{W_i}$ on $H_i$ with $W_i \subseteq H_i$ fulfilling the conditions of \Cref{notation}.

Fix any $\xi \in C$ and let $p_\xi^{H_i} : H_i \to \CalO$ denote the orbit map,
for $i=1,2$.
By \Cref{cor:orbitmap_quasi}, $p_\xi^{H_1} : (H_1, d_{W_1}) \to (\CalO, d_\CalQ)$ is a quasi-isometry,
and similarly for  $p_\xi^{H_2}$.
Let $\left( p_\xi^{H_2}\right)_*: \CalO \to H_2$ denote a quasi-inverse
of $p_\xi^{H_2}$.
By  \Cref{cor:coorbitequivalence}, the map
$
\left( p_\xi^{H_2}\right)_*\circ p_\xi^{H_1} :
  (H_1,d_{W_1}) \to (H_2,d_{W_2}) 
$
is a quasi-isometry.
Condition \ref{enu:QuasiIsometryCondition2} of the quasi-isometry property
provides a finite constant $R_1 > 0$ such that 
\begin{equation}\label{eqn:qd_1}
  \sup_{h_2 \in H_2} \,\,
    \inf_{h_1 \in H_1} \,\,
      d_{W_2} \left(\left( p_\xi^{H_2}\right)_* \circ p_\xi^{H_1}(h_1),h_2 \right)
  \le R_1
  .
\end{equation}
Combining the definition of the orbit maps with the inclusion $H_1 \subseteq H_2$, one obtains that
\[
    \left( p_\xi^{H_2}\right)_* \circ p_\xi^{H_1} (h_1)
    =  \left( p_\xi^{H_2}\right)_* (h_1^{-T} \xi)
    = \left( p_\xi^{H_2}\right)_* \circ p_\xi^{H_2} (h_1), \quad
    h_1 \in H_1.
\]
This observation yields via \Cref{lem:inv_coarse_isom}(ii) that
\begin{equation} \label{eqn:qd_2}
  \sup_{h_1 \in H_1}
    d_{W_2}\left(h_1,\left( p_\xi^{H_2}\right)_* \circ p_\xi^{H_1} (h_1) \right)
  \le R_2
\end{equation}
for a suitable finite constant $R_2 > 0$.

Note that \Cref{eqn:qd_2} implies for arbitrary $h_1 \in H_1$ and $h_2 \in H_2$ that
\begin{align*}
  d_{W_2} (h_1, h_2)
  & \leq d_{W_2} \bigl(h_1, (p_{\xi}^{H_2})_* \circ p_\xi^{H_1} (h_1)\bigr)
         + d_{W_2} \bigl((p_{\xi}^{H_2})_* \circ p_\xi^{H_1} (h_1), h_2\bigr) \\
  & \leq K_2 + d_{W_2} \bigl((p_{\xi}^{H_2})_* \circ p_\xi^{H_1} (h_1), h_2\bigr) 
  .
\end{align*}
Taking the infimum over $h_1 \in H_1$ on both sides, \Cref{eqn:qd_1} then implies
\[
  \inf_{h_1 \in H_1} 
    d_{W_2} (h_1, h_2)
  \leq K_2
       + \inf_{h_1 \in H_1}
           d_{W_2} \bigl((p_{\xi}^{H_2})^{\ast} \circ p_\xi^{H_1} (h_1), h_2\bigr) 
  \leq R_1 + R_2
\]
for arbitrary $h_2 \in H_2$.
In other words, we have shown
\[
  \sup_{h_2 \in H_2} \,\,
    \inf_{h_1 \in H_1} \,\,
      d_{W_2}(h_1,h_2)
  \leq R_1 + R_2. 
\]
Hence, if $B \subseteq H_2$ denotes the $d_{W_2}$-ball with radius $R_1 + R_2$,
then for arbitrary $h_2 \in H_2$ there exists $h_1 \in H_1$ such that $b := h_1^{-1} h_2^{-1} \in B$
and thus $h_2 = b^{-1} h_1^{-1}$, whence $h_2 H_1 = b^{-1} H_1$.
This shows that
\[
  H_2 / H_1
  = \{ b^{-1} H_1 \,\,\colon\,\, b \in \overline{B} \}
  ,
\]
which is compact, since $\overline{B} \subseteq H_2$ is compact.
\end{proof}

\subsection{Equivalence of conjugate subgroups}
As an application of \Cref{thm:char_coorbit_equiv_new}, we show that the property of
coorbit equivalence is preserved under conjugation.
We remark that this could in principle also be proven directly from the definition,
but the proof gets quite tedious.

\begin{corollary}\label{cor:CoorbitEquivalenceConjugation}
    For a closed subgroup $H \leq \GL(d,\R)$ and $A \in \GL(d,\R)$, the following assertions hold:
    \begin{enumerate}
        \item[(i)] If $H$ is integrably admissible, then so is $A^{-1} H A$.
              If $\CalO$ is the essential frequency support of $H$,
              then $\CalO' := A^T \CalO$ is the essential frequency support of $A^{-1} H A$.

        \item[(ii)] If $H$ is connectivity-respecting, then so is $A^{-1} H A$.

        \item[(iii)] If $H_1,H_2 \leq \GL(d,\R)$ are such that $H_1,H_2$ are integrably admissible,
              connectivity-respecting, and coorbit equivalent, then the same holds
              for $A^{-1} H_1 A$ and $A^{-1} H_2 A$.
    \end{enumerate}
\end{corollary}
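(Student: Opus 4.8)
The plan is to establish parts (i), (ii), (iii) in order, since each builds on the previous. For part (i), I would verify the three defining conditions \ref{enu:FrequencySupportFullMeasure}--\ref{enu:FrequencySupportProperness} for the set $\CalO' := A^T \CalO$ with respect to the group $H' := A^{-1} H A$. Openness and full measure of $\CalO'$ are immediate since $A^T$ is a linear isomorphism. For \ref{enu:FrequencySupportCompactlyGenerated}, note that $(H')^T = A^T H^T A^{-T}$, so if $\CalO = H^T C$ then $\CalO' = A^T H^T C = A^T H^T A^{-T} (A^T C) = (H')^T C'$ with $C' := A^T C$, which is compact and contained in $\CalO'$. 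For the properness condition \ref{enu:FrequencySupportProperness}, the key computation is that for $h' = A^{-1} h A \in H'$ one has $(h')^T \xi' = A^T h^T A^{-T} \xi'$; writing $\xi' = A^T \xi$ this equals $A^T (h^T \xi)$. Hence the pair $(h', \xi')$ with $((h')^T \xi', \xi') \in K' \times K'$ corresponds bijectively, via the homeomorphism $(h,\xi) \mapsto (A^{-1} h A, A^T \xi)$ of $H \times \CalO$ onto $H' \times \CalO'$, to a pair $(h,\xi)$ with $(h^T \xi, \xi) \in (A^{-T} K') \times (A^{-T} K')$; since $A^{-T} K' \subseteq \CalO$ is compact, the corresponding set $[A^{-T} K']$ is compact in $H \times \CalO$, and its homeomorphic image $[K']$ is therefore compact in $H' \times \CalO'$. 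This proves (i).

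For part (ii), I would show that the conjugation homeomorphism interacts well with connected components and with compact generation. First, $A^T$ maps connected components of $\CalO$ bijectively onto connected components of $\CalO'$; in particular, if $C \subseteq \CalO$ is compact and connected with $\CalO = H^T C$, then $C' = A^T C$ is compact and connected with $\CalO' = (H')^T C'$, giving condition (c1). Next, if $\CalO_0$ is the connected component of $\CalO$ containing $C$, then $\CalO_0' := A^T \CalO_0$ is the component of $\CalO'$ containing $C'$, and the stabilizer $(H')_0 = \{h' \in H' : (h')^T \CalO_0' = \CalO_0'\}$ equals $A^{-1} H_0 A$ by the same computation $(h')^T \CalO_0' = A^T (h^T \CalO_0)$ used above. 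Since $H_0$ is compactly generated by hypothesis and $x \mapsto A^{-1} x A$ is a topological group isomorphism, $(H')_0$ is compactly generated as well, giving (c2). This proves (ii).

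For part (iii), having established via (i) and (ii) that $A^{-1} H_1 A$ and $A^{-1} H_2 A$ are integrably admissible and connectivity-respecting, it remains to transfer coorbit equivalence. The cleanest route is to invoke the characterization in \Cref{thm:coorbit_equiv_dual_orbits}: $H_1, H_2$ coorbit equivalent means $\CalO_1 = \CalO_2 =: \CalO$ and the induced covers $\CalQ$ on $H_1$, $\CalP$ on $H_2$ are weakly equivalent. By part (i), the conjugated groups both have essential frequency support $A^T \CalO$, so the first condition holds for the conjugates. For the second, observe that if $(h_i)_{i \in I}$ is well-spread in $H_1$ then $(A^{-1} h_i A)_{i \in I}$ is well-spread in $A^{-1} H_1 A$ (conjugation is a topological isomorphism, hence preserves uniform discreteness and uniform density), and the induced cover transforms as $((A^{-1} h_i A)^{-T} (A^T Q))_{i \in I} = (A^T h_i^{-T} Q)_{i \in I} = A^T \CalQ$, i.e., each cover element is the image under the fixed linear isomorphism $A^T$ of the corresponding element of $\CalQ$. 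Since $A^T$ is a bijection, $A^T \CalQ_i \cap A^T \CalP_j \neq \emptyset$ if and only if $\CalQ_i \cap \CalP_j \neq \emptyset$; hence the intersection-counting quantities defining weak equivalence (\Cref{def:weakequivalence}) are literally unchanged, so $A^T \CalQ$ and $A^T \CalP$ are weakly equivalent. Applying \Cref{thm:coorbit_equiv_dual_orbits} in the reverse direction to the conjugated groups gives coorbit equivalence of $A^{-1} H_1 A$ and $A^{-1} H_2 A$.

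I expect the main obstacle to be purely bookkeeping rather than conceptual: one must be scrupulous about the placement of transposes and inverses in the chain of identities $(A^{-1} h A)^{-T} = A^T h^{-T} A^{-T}$ and the corresponding action identities, and about checking that "induced cover" in the precise sense of \Cref{lem:NicecoverExists} (not merely "some cover of the form $(h_i^{-T} Q)_i$") is preserved under conjugation — though the latter follows since the sets $Q', P'$ and family $(A^{-1} h_i A)_i$ manufactured above satisfy all the requisite openness, connectedness, boundedness, and well-spreadness properties as images of those for $H_1$ under a homeomorphism. No genuinely hard estimate is involved.
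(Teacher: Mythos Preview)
Your arguments for parts (i) and (ii) match the paper's proof essentially line for line.

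For part (iii) you take a genuinely different route. The paper invokes \Cref{thm:char_coorbit_equiv_new}: it writes down the conjugation maps $\tau_i : H_i \times C_i \to H_i' \times C_i'$, $(h,\xi) \mapsto (A^{-1} h A, A^T \xi)$, checks that these are bijective isometries for the product metrics $d_{H_i \times C_i}$, and then verifies the identity $(p_2')^\ast \circ p_1' = \tau_2 \circ (p_2^\ast \circ p_1) \circ \tau_1^{-1}$, so that the transition map for the conjugated groups is a composition of quasi-isometries. You instead go back one level to \Cref{thm:coorbit_equiv_dual_orbits} and argue directly on covers: the induced cover for $A^{-1} H_j A$ is $A^T \CalQ_j$, and since $A^T$ is a bijection the intersection pattern---hence weak equivalence---is preserved verbatim. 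Your route is shorter and bypasses the coarse-geometric apparatus of \Cref{sec:ConnectivityRespectingGroups} entirely; the paper's route has the virtue of exercising its own main theorem (and indeed the corollary is presented in the paper explicitly ``as an application of \Cref{thm:char_coorbit_equiv_new}''). Both are correct; the only care needed in your version, which you already flag, is confirming that $A^T \CalQ$ qualifies as an induced cover for the conjugated group in the sense of \Cref{lem:NicecoverExists}, and this is immediate since every structural property (openness, connectedness, compact closure in $\CalO'$, well-spreadness of the point family) is transported by the homeomorphism.
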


\begin{proof}
    Throughout the proof, we simply write $H' = A^{-1} H A$.
\\~\\ 
    (i)
    Let $H$ be integrably admissible with essential frequency support $\CalO \subseteq \R^d$,
    say $\CalO = H^T C$ with $C \subseteq \CalO$ compact.
    Then, note for $C' := A^T C$ that $\CalO' := A^T \CalO \subseteq \R^d$ is open and of full measure,
    and
    \[
      (H')^T C'
      = A^T H^T A^{-T} A^{T} C
      = A^T H^T C
      = A^T \CalO
      = \CalO'
      .
    \]
    Lastly, if $K' \subseteq \CalO'$ is a compact set, then also $K := A^{-T} K' \subseteq \CalO$ is compact,
    and, moreover, 
    \begin{align*}
      [K']
      & = \big\{ (h', \xi ') \in H' \times \CalO' \,\,:\,\, ( (h')^{-T} \xi ', \xi ') \in K' \times K' \big\} \\
      & = \big\{ (A^{-1} h A, A^T \xi ) \,\,:\,\, (h,\xi) \in H \times \CalO \text{ and } (h^T \xi, \xi) \in K \times K \big\} \\
      & 
      \subseteq H' \times \CalO'
    \end{align*}
    is compact.
    In combination, this shows that $H'$ is integrably admissible.
    \\~\\
    (ii)
    Let $H$ be connectivity-respecting.
    Then there exists a compact, connected set $C \subseteq \CalO$ with $\CalO = H^T C$
    and such that $H_0 := \{h \in H \,\,:\,\, h^T \CalO_0 = \CalO_0 \}$ is compactly generated,
    where $\CalO_0 \subseteq \CalO$ is the connected component of $\CalO$ containing $C$.
    Let $C' := A^T C$ and $\CalO' := A^T \CalO$.
    Then $C' \subseteq \CalO'$ is compact and connected,
    and, similarly as in the proof of part (i), it follows that $(H')^T C' = \CalO'$.
    Moreover, $\CalO_0 ' := A^T \CalO_0$ is the connected component of $\CalO'$ containing $C'$,
    and we see that
$  
        H_0' 
        = A^{-1} H_0 A
$    
    is compactly generated, since $H_0$ is.
    Hence, $H'$ is connectivity-respecting.
\\~\\
(iii)
    With notation as in \Cref{thm:char_coorbit_equiv_new}, we see that $\CalO_1 = \CalO_2 =: \CalO$,
    and with
    \begin{align*}
        p_1 : H_1 \times C_1 \to \CalO, (h, \xi) \mapsto h^{-T} \xi
        \qquad \text{and} \qquad
        p_2 : H_2 \times C_2 \to \CalO, (h, \xi) \mapsto h^{-T} \xi
    \end{align*}
    that $p_2^\ast \circ p_1 : (H_1 \times C_1, d_{H_1 \times C_1}) \to (H_2 \times C_2, d_{H_2 \times C_2})$
    is a quasi-isometry, where $p_2^\ast$ is a right-inverse for $p_2$.
    Set $H_i ' := A^{-1} H_i A$ and $C_i ' := A^T C_i$, as well as $\CalO_i ' := A^T \CalO_i$
    and $W_i ' := A^{-1} W_i A$ for $i =1,2$.
    Then $\CalO_1 ' \!=\! \CalO_2 ' \!=\! A^T \CalO$, and 
    $d_{W_i '} (g', h') = d_{W_i} (A g' A^{-1}, A h' A^{-1})$ for $g', h' \in H_i '$, so that the maps
    \[
      \tau_i :  H_i \times C_i \to H_i ' \times C_i ', \quad (h,\xi) \mapsto (A^{-1} h A, A^T \xi)
    \]
    are bijective (quasi)-isometries for $i =1,2$.
    
    Finally, for the orbit maps $ p_1 ' : H_1 ' \times C_1 ' \to \CalO '$ and $p_2 ' : H_2 ' \times C_2 ' \to \CalO '$,
    it is easy to see that the map
    $
      (p_2 ')^\ast : 
      \CalO ' \to H_2 ' \times C_2 ', \;
      \eta \mapsto \tau_i (p_2^\ast (A^{-T} \eta))
    $
    is a right-inverse to $p_2 '$. Moreover, 
    \begin{align*}
      ((p_2 ')^\ast \circ p_1 ') (h', \xi ')
      & = (\tau_2 \circ p_2^\ast) (A^{-T} (h')^{-T} \xi ') \\
      & = (\tau_2 \circ p_2^\ast \circ p_1) (A h' A^{-1}, A^{-T} \xi ') \\
      & = [\tau_2 \circ (p_2^\ast \circ p_1) \circ \tau_1^{-1} ] (h', \xi')
      ,
    \end{align*}
    so that $(p_2 ')^\ast \circ p_1 ' = \tau_2 \circ (p_2^\ast \circ p_1) \circ \tau_1^{-1}$ is a quasi-isometry
    as a composition of quasi-isometries.
    Hence, \Cref{thm:char_coorbit_equiv_new} shows that $H_1 '$ and $H_2 '$ are coorbit equivalent.
\end{proof}

\section{Anisotropic Besov spaces and one-parameter groups}
\label{sec:application}

In this section, the results from the previous sections will be used to investigate
the relation between coorbit spaces associated to one-parameter dilation groups and Besov spaces
associated to (possibly anisotropic) matrix dilations introduced in \cite{bownik2005atomic}.

Let $A \in \GL(d, \R)$ be an expansive matrix, i.e., all eigenvalues are strictly greater than one in modulus.
Following \cite{bownik2005atomic}, for $p, q \in [1,\infty]$ and $\alpha \in \R$,
the associated anisotropic Besov space $\dot{B}_{p,q}^{\alpha}(A)$ is defined as the space
\begin{align} \label{eq:besov}
    \dot{B}_{p,q}^{\alpha} (A)
    = \bigg\{
        f \in \mathcal{S}' (\R^d) / \mathcal{P}
        :
        \big\| \big( |\det A|^{\alpha j} \| f \ast \varphi_j \|_{L^p} \big)_{j \in \Z} \big\|_{\ell^q}
        < \infty
      \bigg\},
\end{align}
where $\varphi_j := |\det A|^j \varphi (A^j \cdot)$ for some suitable $\varphi \in \mathcal{S}(\R^d)$
and where $\CalP$ denotes the space of all ($d$-variate) polynomials;
see \cite{bownik2005atomic} for further details.
The coorbit space $\Co(L^{p,q}(\R^d \rtimes \langle A \rangle))$ associated with the cyclic group $\langle A \rangle := \{A^j : j \in \Z \}$ can be identified with an anisotropic Besov space $\dot{B}_{p,q}^{\alpha}(A)$ for some $\alpha = \alpha(p,q) \in \R$, see, e.g., \cite[Example 6.2]{FuehrVelthoven}.

We start out by characterizing the matrix groups whose coorbit spaces coincide
with the isotropic Besov spaces \cite{frazier1985decomposition}, that is,
the Besov spaces $\dot{B}_{p,q}^{\alpha} (A)$ associated to $A = 2 \cdot I_d$.
It was already observed in \cite{grochenig1991describing, groechenig1988unconditional}
that these spaces can be understood as coorbit spaces associated to the (irreducibly admissible)
group $\mathbb{R}^+ \cdot SO(d)$, or to the (integrably admissible) one-parameter group $\mathbb{R}^+ \cdot I_d$.
The following theorem extends this observation by characterizing all potential candidates.

\begin{theorem}\label{thm:isotropic_Besov}
 Let $H \leq \mathrm{GL}(d, \R)$ be integrably admissible.
 \begin{enumerate}
 \item[(i)] Suppose that $H = \exp(\mathbb{R} X)$ for some $X \in \mathbb{R}^{d \times d}$.
            Then $H = \exp(\mathbb{R}X)$ is coorbit equivalent to $\mathbb{R}^+ \cdot I_d$
            if and only if $X = s \cdot I_d + Y$, with $s \not= 0$ and such that
            $\exp(\mathbb{R}Y)$ is relatively compact in $\GL(d, \R)$.
            
 \item[(ii)] Suppose that $H$ is connected.
             Then $H$ is coorbit equivalent to $\mathbb{R}^+ \cdot I_d$ if and only if $H$ is conjugate
             to a noncompact closed subgroup of $\mathbb{R}^+ \cdot SO(d)$.
 \end{enumerate}
 \end{theorem}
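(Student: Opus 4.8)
The plan is to treat both parts using \Cref{cor:coorbitequivalence} together with \Cref{cor:CoorbitEquivalenceConjugation}, reducing everything to a statement about quasi-isometries of the relevant orbit maps. For the group $H_0 := \R^+ \cdot I_d = \exp(\R \cdot I_d)$, the frequency support is $\CalO = \R^d \setminus \{0\}$, which is connected in dimension $d > 1$ (and has two components if $d=1$, a case that must be handled separately or excluded), and $H_0$ is connectivity-respecting by \Cref{ex:connectivity_respecting}. Fixing a basepoint $\xi_0 \in \CalO$, the orbit map $p_{\xi_0}^{H_0} : (H_0, d_W) \to (\CalO, d_{\CalP})$ is a quasi-isometry by \Cref{cor:orbitmap_quasi}; concretely, $H_0 \cong \R$ via $t \mapsto e^t I_d$, and one checks that under this identification the cover-induced metric on $\CalO$ restricted to the orbit is comparable to $|\log \|\xi\| - \log\|\eta\||$ plus a bounded term. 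The key point for the "if" directions is then to verify that the candidate groups $H$ are connectivity-respecting with the same frequency support $\R^d \setminus \{0\}$ and that the composed transition map to $(H_0, d_W) \cong \R$ is a quasi-isometry.

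For part (i), the "if" direction: if $X = sI_d + Y$ with $s \neq 0$ and $\exp(\R Y)$ relatively compact, then $H = \exp(\R X)$ is itself relatively-compact-by-$\exp(\R I_d)$, so $H$ and $H_0$ differ by a cocompact inclusion after noting $\exp(\R X) \subseteq \exp(\R X)\cdot\overline{\exp(\R Y)}$-type arguments; more cleanly, one shows $H_0 \leq H \cdot K$ for a compact $K$, or directly applies \Cref{cor:cocompact} to the group generated by $H$ and $\overline{\exp(\R Y)}$, which contains $H_0$ cocompactly and contains $H$ cocompactly. For the "only if" direction: assume $H = \exp(\R X)$ is coorbit equivalent to $H_0$. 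By \Cref{cor:coorbitequivalence} the frequency supports agree, so (using \Cref{ex:dilations}(2) and \cite[Proposition 6.3]{FuehrVelthoven}) the eigenvalues of $X$ all have real part of one fixed sign; WLOG strictly positive. Write $X = sI_d + Y$ where $s := \frac{1}{d}\operatorname{tr}(X)/\text{(something)}$ — more precisely choose $s$ so that $Y := X - sI_d$ has trace zero, or better, so that $|\det e^{tX}| = e^{tds}$ matches the determinant growth of $e^{t s I_d}$. The coorbit equivalence forces, via the weight-matching in \Cref{thm:coorbit_equiv_dual_orbits} and \cite[Lemma 2.8]{CoarseGeometryPaper}, that $|\det e^{tX}|$ and $|\det e^{\lambda t I_d}|$ are comparable along corresponding points, pinning down $s \neq 0$. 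The quasi-isometry of the transition map $(H, d_{W_H}) \to (H_0, d_{W_0}) \cong \R$ then says that the one-parameter group $\R \to \GL(d,\R)$, $t \mapsto e^{tX}$ is "coarsely" the scaling group; subtracting the scaling part, $t \mapsto e^{tY} = e^{-stI_d} e^{tX}$ must have the property that $\|e^{tY}\|$ and $\|e^{-tY}\|$ stay bounded as $t$ ranges over $\R$ — because the $d_W$-balls in $H$ pull back to bounded sets under $p_\xi^{H_0}$, forcing $e^{tY}\xi$ to stay in a $d_{\CalP}$-bounded, hence norm-bounded-above-and-below, subset of $\CalO$ for all $t$. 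This boundedness of $\{e^{tY} : t \in \R\}$ is exactly relative compactness of $\exp(\R Y)$.

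For part (ii), the "if" direction follows from (i) combined with \Cref{cor:CoorbitEquivalenceConjugation}: a noncompact closed connected subgroup $H$ of $\R^+ \cdot SO(d)$ necessarily contains $\R^+ \cdot I_d$ up to the $SO(d)$-part — indeed its image under $\R^+\cdot SO(d) \to \R^+$ is a nontrivial connected subgroup, hence all of $\R^+$, so $H \cdot SO(d) \supseteq \R^+ \cdot SO(d) \supseteq H_0$ with compact quotients on both sides, and \Cref{cor:cocompact} applies; conjugation-invariance then handles the general conjugate. For the "only if" direction, suppose connected $H$ is coorbit equivalent to $H_0$. By \Cref{cor:coorbitequivalence} the frequency support is $\CalO = \R^d \setminus\{0\}$, so $H^T$ acts on $\R^d\setminus\{0\}$; fixing $\xi_0 \in \CalO$, the orbit map $p_{\xi_0}^H : (H, d_{W_H}) \to (\CalO, d_{\CalP}) \coarseasymp \R$ is a quasi-isometry, so $H$ is coarsely equivalent to $\R$, hence (being a connected Lie group) $H$ has polynomial growth of degree one; structure theory of connected Lie groups of linear growth, together with the fact that $H$ acts with an open orbit of full measure, should force $H$ to be (conjugate to) $\R^+ \cdot K$ for a compact connected $K \leq SO(d)$ — one extracts the scaling direction from the determinant/growth, and the complement must act with relatively compact image by the same boundedness argument as in part (i), giving a compact group which, being a connected compact subgroup of $\GL(d,\R)$, is conjugate into $SO(d)$.

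The main obstacle I anticipate is the "only if" direction of part (ii): extracting from "coarsely equivalent to $\R$" plus "has an open conull orbit" the precise conclusion "conjugate to a closed subgroup of $\R^+\cdot SO(d)$" requires a genuine structural argument about connected matrix groups — ruling out, e.g., shear-type or mixed hyperbolic-rotational behavior that is not captured by the $1$-dimensional coarse geometry alone but must be excluded using the $H^T$-invariance of the conull set $\CalO = \R^d\setminus\{0\}$ together with the boundedness of transition-map fibers. The cleanest route is probably: (1) the growth-one conclusion forces $\dim H = 1$ if $H$ is noncompact-by-compact with the noncompact part one-dimensional, so $H$'s Lie algebra is spanned by one element $X$ plus a compact ideal; (2) reduce to the one-parameter case via part (i) applied to $\exp(\R X)$; (3) upgrade $\exp(\R Y)$ relatively compact (from part (i)) plus the compact ideal to the statement that $H$ is compact-by-($\R^+\cdot I_d$), then conjugate the maximal compact subgroup into $SO(d)$. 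The determinant-growth bookkeeping in part (i)'s "only if" step, and making the "$d_W$-bounded implies norm-bounded on the orbit" claim rigorous using condition \ref{enu:FrequencySupportProperness}, are the other places where care is needed but no essential difficulty is expected.
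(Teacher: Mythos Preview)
Your outline for part (ii) is close to the paper's approach: both use that $H$ quasi-isometric to $\R$ forces a cocompact closed one-parameter subgroup (the paper invokes its \Cref{prop:char_lin_grwth}, a structure result for connected Lie groups of linear growth), then apply part (i) to that subgroup, then factor $H = K \rtimes \exp(\R X)$ with $K = \{h \in H : \det h = 1\}$ compact connected, and finally conjugate the compact group $\overline{\exp(\R Y)} \cdot K$ into $SO(d)$. Your sketch of this structural step is accurate.

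For part (i), however, you take a genuinely different route from the paper, and there are gaps. The paper does \emph{not} argue coarse-geometrically here. Instead it passes to the cocompact cyclic subgroups $\langle A \rangle = \langle \exp X \rangle$ and $\langle B \rangle = \langle e \cdot I_d \rangle$ via \Cref{lem:cocompact}, identifies the associated coorbit spaces with the anisotropic Besov spaces $\dot{B}^\alpha_{p,q}(A)$ and $\dot{B}^\alpha_{p,q}(B)$, and then invokes the criterion from \cite{FuehrCheshmavar} that these coincide iff $\sup_{k \in \Z} \|A^{-k} B^{\lfloor \epsilon k \rfloor}\| < \infty$ (with $\epsilon = 1$ after normalization). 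Since $A^{-k}B^k = \exp(-kY)$, this is precisely relative compactness of $\exp(\R Y)$. The Besov-theoretic detour handles both directions simultaneously and cleanly.

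Your direct argument has two weak points. For the ``if'' direction you suggest applying \Cref{cor:cocompact} to a group containing both $H$ and $H_0$, such as $H \cdot \overline{\exp(\R Y)}$; but \Cref{lem:cocompact} requires the \emph{larger} group to be integrably admissible, which you have not checked and which is not automatic. For the ``only if'' direction, you claim the quasi-isometry of the transition map forces $\{e^{tY}\xi : t \in \R\}$ to stay $d_{\CalP}$-bounded, hence $\|e^{tY}\|$ bounded. But the transition map $(H,d_{W_1}) \to (H_0,d_{W_2})$ being a quasi-isometry $\R \to \R$ carries no information by itself; the content must come from the \emph{specific} interaction of the two orbit maps, i.e., from weak equivalence of the induced covers. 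Extracting the uniform bound $\sup_{t \in \R} \|e^{tY}\| < \infty$ from weak equivalence amounts to showing that $e^{-tX^T}Q$ meets only $H_0$-cover elements at scale $\asymp e^{-ts}$, uniformly in $t$ and in \emph{all} directions of $Q$---this is not as immediate as your sketch suggests, and is exactly what the \cite{FuehrCheshmavar} criterion packages for you.
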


 \begin{proof}
    The claim is trivial for $d = 1$, so that we can (and will)
    assume $d > 1$ for what follows.

     For the proof of part (i), assume that $H = \exp(\R X)$. 
     By \cite[Proposition 6.3]{FuehrVelthoven}, the group $H = \exp(\mathbb{R} X)$ is integrably admissible
     if and only if all real parts of eigenvalues of $X$ are either strictly positive or strictly negative.
     In either case, we find that 
     $s := \frac{{\rm trace}(X)}{d} \not= 0$, and after replacing $X$ by a nonzero scalar multiple, we may assume $s=1$.
     By \Cref{ex:connectivity_respecting}, the group $H$ is connectivity-respecting,
     since $H = \exp(\R X)$ is connected and since
     \cite[Proposition 6.3]{FuehrVelthoven} shows that the essential
     frequency support is $\CalO = \R^d \setminus \{0\}$, which is connected, since $d > 1$.

     Define $Y := X- s \cdot I_d$, so that in particular ${\rm trace}(Y) = 0$.
     In addition, let $A := \exp(X)$ and $B := e \cdot I_d$.
     Then, by \Cref{lem:cocompact}, the one-parameter group $\exp(\mathbb{R} X)$
     is coorbit equivalent to the cyclic group $\langle A \rangle$ generated by $A$,
     and $\mathbb{R}^+ \cdot I_d$ is coorbit equivalent to the cyclic group $\langle B \rangle$ generated by $B$. Note that \Cref{lem:cocompact} also implies that $\langle A \rangle $ and $\langle B \rangle $ are integrably admissible.
     The coorbit spaces associated to these cyclic groups
     can be canonically identified with homogeneous Besov spaces $\dot{B}_{p,q}^\alpha(A)$
     and $\dot{B}_{p,q}^\alpha(B)$ with $\alpha = \frac{1}{2} - \frac{1}{q}$; see, e.g., \cite[Example 6.2]{FuehrVelthoven}.
     By our choice of normalizations and since $\det (\exp(M)) = e^{\mathrm{trace} (M)}$, we see
     \[
        \epsilon
        := \epsilon(A,B)
        := \frac{\ln(|\det(A)|)}{\ln(|\det(B)|)}
        = s
         = 1,
    \]
     and thus it follows from \cite[Corollary 6.5, Definition 4.5, Remark 4.9, and Lemma 4.8]{FuehrCheshmavar}
     that the equality $\dot{B}_{p,q}^\alpha(A) = \dot{B}_{p,q}^\alpha(B)$ holds if and only if
     \[
       \sup_{k \in \Z}
         \| A^{-k} B^{\lfloor \epsilon k \rfloor} \|
       < \infty .
     \]
     Since $A^{-k} B^{\lfloor \epsilon k \rfloor} = \exp(- k Y)$ for $k \in \Z$, the latter condition is equivalent
     to the cyclic group $\langle \exp(Y) \rangle$ having compact closure in ${\rm GL}(d,\mathbb{R})$.
     Since $\langle \exp(Y) \rangle = \exp (\Z Y)$ is cocompact in $\exp{\mathbb{R}Y}$,
     relative compactness of $\langle \exp(Y) \rangle$ is equivalent to relative compactness of $\exp{\mathbb{R}Y}$ in $\GL(d, \R)$,
     which completes the proof of assertion (i). 

     For the proof of assertion (ii), assume first that $H$ is connected and coorbit equivalent to the one-parameter group $\mathbb{R}^+\cdot I_d$.
     Then, by \Cref{cor:coorbitequivalence}, the group $H$ is quasi-isometric to $\mathbb{R}$.
     \Cref{prop:char_lin_grwth} entails therefore the existence of a cocompact closed noncompact one-parameter group $\exp(\mathbb{R}X) \subseteq H$.
     \Cref{lem:cocompact} yields that this subgroup is coorbit equivalent to $H$, and hence to $\mathbb{R}^+\cdot I_d$.
     Assertion (i) entails that (possibly after renormalization) $X = I_d + Y$ and that $\exp(\mathbb{R}Y)$ is relatively compact in $\GL(d,\R)$.
     This implies in particular that the set $\{ \det (\exp(t Y)) : t \in \R \} = \{ \exp(\mathrm{trace}(t Y)) : t \in \R \}$
     is relatively compact in $\R \setminus \{ 0 \}$, and thus $\operatorname{trace} Y = 0$. 

     We next show that $H$ can be written as the semidirect product $H = K \rtimes \exp(X)$, where $K$ is the closed normal subgroup $K := \{ h \in H: {\rm det}(h) = 1 \}$. Moreover, we show that $K$ is compact and connected. For this, first note that since $H$ is connected, ${\rm det}(h)>0$ for all $h \in H$. 
     By normalization of $X$, for any $h \in H$ we have 
     \begin{equation} \label{eqn:factor_simil}
       \det (\exp(sX))
       = \exp(\operatorname{trace}(s X))
       = \det (h),\,\,  \mbox{ with } s = \frac{\ln(\det(h))}{d}\,.
     \end{equation}
     This shows that $k := \exp(-sX) h \in K$, and we obtain the unique factorization
     \begin{equation}
       h = \exp(s X) k, \; k \in K ,
       \label{eq:Factorization}
     \end{equation}
and hence $H = K \rtimes \exp(\mathbb{R} X)$. 

For the compactness of $K$,
 let $(k_n)_{n \in \N} $ be a sequence in $K$.
         Since $H / \exp(\R X)$ is compact, it follows from \cite[Lemma (2.46)]{FollandAHA}
         that there exists a compact set $C \subseteq H$ such that $H = C \exp(\R X)$.
         By compactness, we have $0 < \alpha \leq \det(c) \leq \beta < \infty$ for all $c \in C$ and suitable $\alpha,\beta$.
         We can hence write $k_n = c_n \exp(t_n X)$ for suitable $c_n \in C$ and $t_n \in \R$.
         Note that 
         \[1 = \det (k_n) = \det(c_n) \det(\exp(t_n X)) = \det(c_n) \exp(\operatorname{trace} (t_n X)), \]
         and thus $\frac{1}{\beta} \leq \exp(t_n d)  \leq \frac{1}{\alpha}$.
         Therefore, $(t_n)_{n \in \N}$ is bounded, so that for a subsequence  $t_{n_\ell} \to t \in \R$ and
         $c_{n_\ell} \to c \in C$.
         In turn, this implies $k_{n_\ell} = c_{n_\ell} \exp(t_{n_\ell} X) \to c \exp(t X) =: k$,
         with $k \in K$, since $K \subseteq H$ is closed. Thus, $K$ is a compact subgroup.

 To show that $K$ is connected, we note that the map $H \to K$, $h \mapsto k$ explicitly determined in (\ref{eqn:factor_simil}) and (\ref{eq:Factorization}) is clearly continuous, and equals the identity on $K$. In particular this map is onto, and hence $K$ is the continuous image of the connected group $H$, and therefore itself connected. 
         
    Since $H = K \rtimes \exp(\mathbb{R} X)$, it follows, in particular,  that $\exp(\R X)$ normalizes $K$, that is, $\exp(t X) K \exp(-t X) \subseteq K$ for all $t \in \R$.
     Now, since $Y = X - I_d$ and since $X$ and $I_d$ commute,
     we have $\exp(t Y) = \exp(t \cdot (X - I_d)) = \exp(t X) \exp(-t I_d) = e^{-t} \exp(t X) = \exp(t X) e^{-t}$
     for all $t \in \R$.
     In view of this identity, it follows that also $\exp(\mathbb{R}Y)$ normalizes $K$,
     and the same then holds for the closure $\overline{\exp (\mathbb{R}Y)}$ of $\exp(\R Y)$ in $\GL(d, \R)$.
     In combination with the fact that $K$ is compact and connected, it follows that $\overline{\exp(\mathbb{R}Y)} \cdot K \le {\rm GL}(d,\mathbb{R})$ is a compact connected subgroup.
     Since any compact subgroup of $\mathrm{GL}(d, \R)$ is conjugate to a subgroup of $\mathrm{O}(d)$ (see, e.g., \cite[Proposition 6.3.6]{abbaspour2007basic}) it follows that $\overline{\exp(\mathbb{R}Y)} \cdot K$ is conjugate
     to a subgroup of $\mathrm{O}(d)$, and (by connectedness) even to a subgroup of $\mathrm{SO}(d)$.
     In turn, using again that $\exp(t X) = e^t \exp(t Y)$,
     this implies that $H = K \rtimes \exp(\R X)$ is conjugate to a subgroup of $\mathbb{R}^+ \cdot SO(d)$, and it is clearly noncompact. 
     
     For the converse implication, note first that it suffices to show that a connected closed noncompact subgroup $H$ of $\mathbb{R}^+ \cdot \mathrm{SO}(d)$ is coorbit equivalent to $\mathbb{R}^+ \cdot I_d$, because then, by \Cref{cor:CoorbitEquivalenceConjugation}, also any conjugate
     $A^{-1} H A$ is coorbit equivalent to $A^{-1} (\mathbb{R}^+ \cdot I_d)  A = \mathbb{R}^+ \cdot I_d$ for $A \in \mathrm{GL}(d, \mathbb{R})$. Hence, let $H$ be a connected closed noncompact subgroup $H$ of $\mathbb{R}^+ \cdot \mathrm{SO}(d)$. Note that the group $\mathbb{R}^+ \cdot \mathrm{SO}(d)$ is isomorphic to  $\R^+ \times \mathrm{SO}(d)$ via  the map $(r,h) \mapsto r h$, and hence it follows by elementary Lie theory that the Lie group of $\mathbb{R}^+ \cdot \mathrm{SO}(d)$
     is the sum of $\mathrm{span} \{ I_d \}$ and the Lie algebra $\mathfrak{so}(d)$ of $\mathrm{SO}(d)$.
     Since $H$ is not contained in $\mathrm{SO}(d)$ and since it is generated by the image of the exponential map (because $H$ is connected),
     this image must contain a one-parameter group $\exp(\mathbb{R} X) \subseteq H$ which is not contained in $\mathrm{SO}(d)$.
     We may then normalize the infinitesimal generator to obtain $X = I_d + Y$, where $Y \in \mathfrak{so}(d)$. 
     As in the proof of the converse direction, for every $h \in H$, we have 
     \[
       h = \exp(s X) k, \; k \in K .
     \]
     where $s$ is given by (\ref{eqn:factor_simil}), and 
     \[
       K = \{ h \in H : {\rm det}(h) = 1 \}~. 
     \]
     The assumptions on $H$ then entail that $K$ is a closed subgroup of $\mathrm{SO}(d)$, in particular compact. 
     This establishes that $\exp(\mathbb{R} X) \subseteq H$ is cocompact,
     so that \Cref{lem:cocompact} shows that $H$ and $\exp(\R X)$ are coorbit equivalent.
     Coorbit equivalence to $\mathbb{R}^+ \cdot I_d$ then follows by part (i). 
\end{proof}

The next theorem clarifies the coorbit equivalence of two
somewhat extreme subclasses of dilation groups:
Irreducibly admissible dilation groups on the one hand, and one-parameter subgroups on the other.
In particular, it implies that the use of reducibly acting dilation groups is essential
for treating the full scale of anisotropic Besov spaces within the setting of coorbit spaces.
The uniqueness of the essential frequency support, noted in Theorem \ref{thm:DualOrbitUniqueness},
is an essential ingredient of the following proof. 

\begin{theorem} \label{thm:oneparameter}
  Let $H_1 \leq {\rm GL}(d, \mathbb{R})$ be a one-parameter integrably admissible dilation group,
  and let $H_2 \leq  {\rm GL}(d, \mathbb{R})$ be an irreducibly admissible dilation group (see \Cref{ex:dilations}).
  If $H_1$ and $H_2$ are coorbit equivalent, then $H_1$ is coorbit equivalent to  $\mathbb{R}^+ \cdot I_d$.
\end{theorem}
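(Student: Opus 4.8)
The plan is to combine the structural results of Sections~\ref{sec:ConnectivityRespectingGroups}--\ref{sec:coorbit_equivalence} with Theorem~\ref{thm:isotropic_Besov}(i). I would first dispose of $d=1$ (where $H_1 = \mathbb{R}^+ = \mathbb{R}^+ \cdot I_1$) and assume $d>1$ from now on. Since $H_1$ is a one-parameter integrably admissible group in dimension $d>1$, its essential frequency support is $\mathcal{O}_1 = \mathbb{R}^d \setminus \{0\}$ by \cite[Proposition 6.3]{FuehrVelthoven} (cf.\ Example~\ref{ex:connectivity_respecting}(1)). Using uniqueness of the essential frequency support (Theorem~\ref{thm:DualOrbitUniqueness}) together with Theorem~\ref{thm:coorbit_equiv_dual_orbits}, the assumed coorbit equivalence forces $\mathcal{O}_2 = \mathcal{O}_1 = \mathbb{R}^d \setminus \{0\}$; in particular $H_2^T$ acts transitively on $\mathbb{R}^d \setminus \{0\}$ with compact point stabilizers, and $\mathcal{O}_2$ is connected. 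Both $H_1$ and $H_2$ are connectivity-respecting (Example~\ref{ex:connectivity_respecting}), and since $\mathcal{O}_2$ is connected the word metrics $d_{W_1}, d_{W_2}$ of Notation~\ref{notation} are defined on all of $H_1$, resp.\ $H_2$. Corollary~\ref{cor:coorbitequivalence} then shows that $(H_1, d_{W_1})$ and $(H_2, d_{W_2})$ are quasi-isometric; since $H_1 = \exp(\mathbb{R}X)$ is a noncompact one-parameter group, $(H_1, d_{W_1})$ is quasi-isometric to $\mathbb{R}$, hence so is $(H_2, d_{W_2})$.

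Since $H_2$ has finitely many connected components \cite[Remark 4]{fuhr1998continuous}, the identity component $H_2^0$ is cocompact and hence also quasi-isometric to $\mathbb{R}$; being a connected Lie group of degree-one polynomial growth, it contains by Proposition~\ref{prop:char_lin_grwth} a closed, noncompact, cocompact one-parameter subgroup $S = \exp(\mathbb{R}Z)$, which is then cocompact in $H_2$. By Lemma~\ref{lem:cocompact}, $S$ is integrably admissible with frequency support $\mathbb{R}^d \setminus \{0\}$ and coorbit equivalent to $H_2$, hence to $H_1$; moreover $\operatorname{trace}(Z) \neq 0$ by Example~\ref{ex:dilations}(2), so we may set $s := \operatorname{trace}(Z)/d \neq 0$. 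The remaining goal is to show that $S$ is coorbit equivalent to $\mathbb{R}^+ \cdot I_d$ -- equivalently, by Theorem~\ref{thm:isotropic_Besov}(i), that $\exp(\mathbb{R}(Z - s I_d))$ is relatively compact in $\mathrm{GL}(d,\mathbb{R})$ -- for this then transfers to $H_1$ by transitivity of coorbit equivalence. As a first step I would establish the decomposition $H_2^0 = K \rtimes S$ with $K := \{h \in H_2^0 : |\det h| = 1\}$: the homomorphism $|\det| \colon H_2^0 \to \mathbb{R}^+$ is surjective (since $|\det|(S) = \mathbb{R}^+$) and injective on $S$, and a standard structure argument for connected Lie groups of polynomial growth shows that its kernel $K$ is a compact, connected, normal subgroup.

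Finally, I would analyze $Z$ using that $H_2^0 = K \rtimes S$ acts transitively on $\mathbb{R}^d \setminus \{0\}$. The compact group $K$ preserves some ellipsoid $E$, and a dimension-plus-connectedness count (here $d>1$ enters, via connectedness of $E$) forces the $K$-orbit of a point of $E$ to be all of $E$; hence $K$ acts irreducibly on $\mathbb{R}^d$, so by the real form of Schur's lemma its centralizer $C := C_{\mathrm{GL}(d,\mathbb{R})}(K)$ is one of the division algebras $\mathbb{R}, \mathbb{C}, \mathbb{H}$, and the norm-one elements of $C$ form a compact subgroup $U$ that commutes with $K$. Since $S$ normalizes $K$, the adjoint action of $\exp(tZ)$ on the Lie algebra of $K$ is inner, which lets me write $\exp(tZ) = k_t c_t$ with $k_t \in K$ and $c_t \in C^\times$; decomposing $e^{-ts} c_t = \rho_t u_t$ with $\rho_t > 0$ and $u_t \in U$, and comparing absolute values of determinants (using that $|\det| \equiv 1$ on $\exp(t(Z - s I_d))$, on $K$, and on $U$), one gets $\rho_t = 1$, so $\exp(t(Z - s I_d)) = k_t u_t$ lies in the compact group $KU$ for every $t$. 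Thus $\exp(\mathbb{R}(Z - s I_d)) \subseteq KU$ is relatively compact, and Theorem~\ref{thm:isotropic_Besov}(i) finishes the argument.

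I expect the last two paragraphs to be the crux. Transitivity of the $H_2$-action alone does not pin down $Z$ -- non-conformal one-parameter groups can still cover $\mathbb{R}^d \setminus \{0\}$ up to a compact set -- so one genuinely needs the group-theoretic input, namely that the "compact part" $K$ is normal and acts irreducibly, which squeezes the centralizer down to a division algebra and forces the relative compactness of $\exp(\mathbb{R}(Z - sI_d))$. The technically delicate point is deriving the compactness and connectedness of $K$ from the sole fact that $H_2^0$ is quasi-isometric to $\mathbb{R}$, and making sure that the quasi-isometries supplied by Corollary~\ref{cor:coorbitequivalence} and Proposition~\ref{prop:char_lin_grwth} are compatible with this structure theory.
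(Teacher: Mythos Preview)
Your argument is correct and takes a genuinely different route from the paper's proof. The paper never analyzes the internal structure of $H_2$; instead it works on the $H_1$-side via the anisotropic Besov classification of \cite{FuehrCheshmavar}. Concretely, it replaces $H_1$ by the cyclic group $\langle A\rangle$ with $A=\exp(Y)$, passes to the expansive normal form $B$, and identifies the ``symmetry group'' $\mathcal{S}_{H_1}=\{C:\dot B^\alpha_{1,1}(C^{-1}AC)=\dot B^\alpha_{1,1}(A)\}$ with the centralizer $Z(B)$. Conjugation invariance of coorbit equivalence (\Cref{cor:CoorbitEquivalenceConjugation}) then forces $H_2\subseteq Z(B)$, and the contradiction comes from the elementary fact that if $B\neq 2^{1/d}I_d$ then $Z(B)^T$ preserves a proper eigenspace of $B^T$, so $H_2^T$ cannot act transitively on $\mathbb{R}^d\setminus\{0\}$.

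Your approach, by contrast, transports the problem to a one-parameter subgroup $S=\exp(\mathbb{R}Z)$ of $H_2$ via \Cref{prop:char_lin_grwth}, and then proves directly that $S$ satisfies the criterion of \Cref{thm:isotropic_Besov}(i). The substantive new ingredients are the decomposition $H_2^0=K\rtimes S$ with $K$ compact connected (which mirrors the proof of \Cref{thm:isotropic_Besov}(ii)), the dimension count showing that the $K$-orbit on the invariant ellipsoid is open and hence everything, and the Schur/normalizer argument pinning $\exp(tZ)$ inside $K\cdot C^\times$. Two small points deserve one line each when you write it out: first, transitivity of $H_2^0$ (not just $H_2$) on $\mathbb{R}^d\setminus\{0\}$ follows because $H_2^0$-orbits are open and the space is connected for $d>1$; second, the reason the conjugation automorphism $\alpha_{\exp(tZ)}$ of $K$ is inner is that $t\mapsto\alpha_{\exp(tZ)}$ is a continuous path from the identity in $\mathrm{Aut}(K)$, and for compact connected $K$ one has $\mathrm{Aut}(K)^0=\mathrm{Inn}(K)$. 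With those in place your determinant comparison goes through exactly as written.

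The paper's route is shorter if one is willing to import the normal-form machinery of \cite{FuehrCheshmavar}, and it makes the obstruction (a nontrivial $Z(B)$-invariant subspace) completely explicit. Your route is more self-contained on the Lie-theoretic side, avoids the normal-form detour entirely, and yields as a byproduct the structural statement that any irreducibly admissible $H_2$ quasi-isometric to $\mathbb{R}$ has identity component of the form $K\rtimes S$ with $K$ acting irreducibly---information the paper's proof does not extract.
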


\begin{proof}
Let $Y \in \R^{d \times d}$ denote the infinitesimal generator of $H_1$, i.e., $H_1 = \exp(\R Y)$.
By \cite[Proposition 6.3]{FuehrVelthoven}, the group $H_1 = \exp(\mathbb{R} Y)$
is integrably admissible if and only if all real parts of eigenvalues of $Y$ are either strictly positive
or strictly negative.
We may therefore assume that they all have positive real part,
which yields that the matrix $A :=\exp(Y)$ is expansive, in the sense that all its eigenvalues are strictly greater than one in modulus.

By \Cref{lem:cocompact}, $H_1 = \exp(\mathbb{R}Y)$ is coorbit equivalent
to the cocompact cyclic subgroup $\langle A \rangle = \exp(\mathbb{Z}Y)$,
and the latter is integrably admissible.
For proving the claim, it suffices therefore to show that $\langle A \rangle $ is coorbit equivalent to $\mathbb{R}^+ \cdot I_d$.
For this, recall that the coorbit spaces $\Co(L^{1,1}(\R^d \rtimes \langle A \rangle ))$
associated to $\langle A \rangle$ can be identified with anisotropic Besov spaces
$\dot{B}_{1,1}^\alpha(A)$ for a suitable $\alpha \in \R$; see, e.g., \cite[Example 6.2]{FuehrVelthoven}.
Define the matrix group
\[
  \mathcal{S}_{H_1}
  := \{
       C \in {\rm GL}(d, \mathbb{R})
       :
       \dot{B}_{1,1}^\alpha (C^{-1} A C) =  \dot{B}_{1,1}^\alpha (A)
     \}. 
\] 

Using \cite[Lemmata 7.7 and  7.8]{FuehrCheshmavar} (see also \cite[Remark 7.11]{FuehrCheshmavar}),
there exists an expansive matrix $B$ such that $\dot{B}_{p,q}^{\beta}(A) = \dot{B}_{p,q}^{\beta} (B)$
for all $p,q \in [1,\infty]$ and $\beta \in \R$, and, in addition,
$B$ is in \textit{expansive normal form}, that is, $B$ has only positive eigenvalues,
with ${\rm det}(B) = 2$.
In view of the identification of the coorbit spaces $\Co(L^{p,q}(\R^d \rtimes \langle A \rangle ))$
and $\Co(L^{p,q}(\R^d \rtimes \langle B \rangle ))$ with the homogeneous Besov spaces
$\dot{B}_{p,q}^\beta (A)$ and $\dot{B}_{p,q}^\beta (B)$ with a certain $\beta = \beta(p,q) \in \R$
(see \cite[Example 6.2]{FuehrVelthoven}), this implies that $\langle A \rangle$ and $\langle B \rangle$
are coorbit equivalent. For further use below, we note at this point that the identification of anisotropic Besov spaces with suitable coorbit spaces also allows to derive the alternative description 
\begin{equation} \label{eqn:alt_def_S}
 \mathcal{S}_{H_1}
  = \{
       C \in {\rm GL}(d, \mathbb{R})
       :
       C^{-1} H_1 C \mbox{ is coorbit equivalent to }  H_1     \}. 
\end{equation}

By  \cite[Corollary 6.5]{FuehrCheshmavar}, the fact that $\dot{B}_{1,1}^\alpha (A) = \dot{B}_{1,1}^\alpha(B)$ implies that $A$ and $B$ are equivalent in the sense of \cite[Corollary 6.5]{FuehrCheshmavar}. Since also $C^{-1} A C$ and $C^{-1} B C$ are equivalent in the same sense for every $C \in \GL(d, \R)$, this implies by another application of \cite[Corollary 6.5]{FuehrCheshmavar} that also $\dot{B}_{1,1}^\alpha (C^{-1} AC) = \dot{B}_{1,1}^\alpha(C^{-1} BC)$. Hence,
\[
  \mathcal{S}_{H_1}
  = \{
      C \in {\rm GL}(d, \mathbb{R})
      :
      \dot{B}_{1,1}^\alpha (C^{-1} B C) =  \dot{B}_{1,1}^\alpha (B)
    \}.
\] 
Since both $B$ and $C^{-1}BC$ are in expansive normal form,
for every invertible matrix $C$, an application of \cite[Theorem 7.9]{FuehrCheshmavar} implies that
\[
  \mathcal{S}_{H_1}
  = \{ C \in {\rm GL}(d,\mathbb{R}): C^{-1} B C = B \}
  = Z(B), 
\]
where $Z(B) \subseteq {\rm GL}(d,\mathbb{R})$ denotes the \textit{centralizer} of $B$. 

Suppose now that $H_2$ is an irreducibly admissible matrix group that is coorbit equivalent to $H_1$. 
By \Cref{thm:coorbit_equiv_dual_orbits}, 
$H_2$ has the same frequency support as $H_1$,
which is
\[
  \mathcal{O} = \mathbb{R}^d \setminus \{ 0 \}
\]
cf. \cite[Example 6.2]{FuehrVelthoven}. 

 We now prove that $H_2 \subseteq \mathcal{S}_{H_1}$. For this, let $h \in H_2$. Then $h^{-1} H_2 h$ is trivially coorbit equivalent to $H_2$, and therefore also to $H_1$. On the other hand, $h^{-1} H_2 h$ is coorbit equivalent to $h^{-1} H_1 h$ by \Cref{cor:CoorbitEquivalenceConjugation} (iii); observe that both groups are connectivity respecting by Examples \ref{ex:connectivity_respecting} (1) and (2).
In combination, this yields that $H_1$ and $h^{-1} H_1 h$ are coorbit equivalent, for all $h \in H_2$, and thus Equation (\ref{eqn:alt_def_S}) yields
\[
  H_2 \subseteq \mathcal{S}_{H_1} = Z(B)~. 
\]

Arguing by contradiction, assume that the group $\langle A \rangle$ is not coorbit equivalent to $\mathbb{R}^+ \cdot I_d$.
Since $A$ and $B$ are coorbit equivalent, this is precisely the case if $ \langle B \rangle$ is not coorbit equivalent to $\mathbb{R}^+ \cdot I_d$.  
The expansive normal form matrix associated to the latter group is given by $2^{1/d} \cdot I_d$.
Hence, $B \neq 2^{1/d} I_d$ by \cite[Theorem 7.9]{FuehrCheshmavar}.
Now the fact that $B \not= 2^{1/d} \cdot I_d$ forces, for any eigenvalue $\lambda$ of $B^T$
with associated eigenspace $E_\lambda$ that $E_\lambda \subsetneq \mathbb{R}^d$; otherwise, $B^T$ and hence $B$
would be a multiple of the identity and thus $B = 2^{1/d} I_d$ since $\det B = 2$.
The definition of $Z(A)$ then immediately entails $C^T E_\lambda \subseteq E_\lambda$,
for all $C \in Z(A)$. 
As a consequence, one gets 
\[
  Z(B)^T E_\lambda \subseteq E_\lambda .
\]
In particular, the dual action of $Z(B)$ on $\CalO = \mathbb{R}^d \setminus \{0 \}$ cannot be transitive, which implies that the dual action of the subgroup $H_2 \subseteq Z(B)$ is also not transitive.
This yields the desired contradiction, and completes the proof.
\end{proof}

Lastly, we provide the example mentioned in the discussion below \Cref{cor:coorbitequivalence},
which shows the difference of coorbit equivalence between irreducible and reducible dilation groups. 

\begin{example} \label{ex:counterexample}

Define
\[
  A = \begin{pmatrix}
        3 & 0 & 0 \\
        0 & 2 & 0 \\
        0 & 0 & 2
      \end{pmatrix}
  \qquad \text{and} \qquad
  B = \begin{pmatrix}
        2 & 0 & 0 \\
        0 & 2 & 0 \\
        0 & 0 & 3
      \end{pmatrix}
\]
as well as
\[
  A_0 = \begin{pmatrix}
          \ln(3) & 0      & 0 \\
          0      & \ln(2) & 0 \\
          0      & 0      & \ln(2)
        \end{pmatrix}
  \qquad \text{and} \qquad
  B_0 = \begin{pmatrix}
          \ln(2) & 0      & 0 \\
          0      & \ln(2) & 0 \\
          0      & 0      & \ln(3)
        \end{pmatrix}
  .
\]
Furthermore, let $H_1 := \exp(\R A_0)$ and $H_2 := \exp (\R B_0)$
and note that if we define
\[
  A(t)
  := \begin{pmatrix}
       3^t & 0   & 0 \\
       0   & 2^t & 0 \\
       0   & 0   & 2^t
     \end{pmatrix}
  \qquad \text{and} \qquad
  B(t)
  := \begin{pmatrix}
       2^t & 0   & 0 \\
       0   & 2^t & 0 \\
       0   & 0   & 3^t
     \end{pmatrix}
\]
then $A : \R \to H_1, t \mapsto A(t)$ and $B : \R \to H_2, t \mapsto B(t)$
are isomorphisms of topological groups.
In particular, 
$H_1 = \{ A(t) \,\,:\,\, t \in \R \}$ and $H_2 = \{ B(t) \,\,:\,\, t \in \R \}$.
Since the real parts of all eigenvalues of $A_0, B_0$ are strictly positive,
\cite[Proposition 6.3]{FuehrVelthoven} shows that the groups $H_1, H_2$
are integrably admissible with associated frequency support
$ \CalO = \R^3 \setminus \{ 0 \}$.
In addition, since $H_1, H_2$ and $\CalO$ are connected, it follows by \Cref{ex:connectivity_respecting}(a) that
$H_1, H_2$ are connectivity-respecting, and the stabilizer subgroups of $H_1$ and $H_2$
of the connected components of $\CalO$ are $H_1$ and $H_2$, respectively.
As such, they are compactly generated, and open, precompact, symmetric generating sets for $H_1$ and $H_2$ are given by
$W_1 := \{ A(t) \,\,:\,\, -1 < t < 1 \}$ and $W_2 := \{ B(t) \,\,:\,\, -1 < t < 1 \}$, respectively.

We first show that $H_1$ and $H_2$ are not coorbit equivalent.
Let $p, q \in [1,\infty]$.
By \cite[Corollary 6.7]{FuehrVelthoven}, the coorbit space $\Co (L^{p,q}(\R^3 \rtimes H_1))$ coincides
with the anisotropic, homogeneous Besov space $\dot{B}^{p,q}_{\alpha(p,q)}(A)$
for a certain $\alpha(p,q) \in \R$.
Similarly, $\Co (L^{p,q}(\R^3 \rtimes H_2))$ coincides with  $\dot{B}^{p,q}_{\beta(p,q)}(B)$
for a certain $\beta(p,q) \in \R$.
Since $A,B$ are expansive matrices with $\det A = \det B$, but $A \neq B$,
it follows by \cite[Theorem 7.9(a)]{FuehrCheshmavar} that $A,B$ are not equivalent
in the terminology of \cite{FuehrCheshmavar}.
By \cite[Lemma 6.2]{FuehrCheshmavar}, this means that
the homogeneous covers $\CalQ, \CalP$ associated to $A$ and $B$ are not weakly equivalent.
In turn, this yields (with a similar proof as in \cite[Theorem 6.4]{FuehrCheshmavar})
that the homogeneous, anisotropic Besov spaces
$\dot{B}_{p,q}^{\alpha}(A)$ and $\dot{B}_{p,q}^{\beta}(B)$
never coincide (except perhaps for $(p,q) = (2,2)$).
All in all, this shows that $H_1$ and $H_2$ are not coorbit equivalent.

We now finally show that for a specific choice of $\xi \in \CalO$, an associated transition map is a quasi-isometry.
Choose $\xi := (0,1,0)^T \in \CalO$.
Then the orbit maps are given by
\[
  p_{\xi}^{H_1} : \quad
  H_1 \to \CalO, \quad
  A(t) \mapsto [A(t)]^{-T} \xi = \begin{pmatrix} 0 \\ 2^{-t} \\ 0 \end{pmatrix}
\]
and
\[
  p_{\xi}^{H_2} : \quad
  H_2 \to \CalO, \quad
  B(t) \mapsto [B(t)]^{-T} \xi = \begin{pmatrix} 0 \\ 2^{-t} \\ 0 \end{pmatrix}
  .
\]
Let $\CalQ$ and $\CalP$ be the covers associated to $H_1$ and $H_2$
according to part \ref{enu:Notationcover} of \Cref{notation}.
By \Cref{cor:orbitmap_quasi}, it follows that
$p_{\xi}^{H_2} : (H_2, d_{W_2}) \to (\CalO, d_{\CalP})$ is a quasi-isometry.
As seen in \Cref{sec:quasi-isometry}, this implies that there exists a quasi-inverse
$p_\ast : \CalO \to H_2$ to $p_{\xi}^{H_2}$.
We now define a modified quasi-inverse through
\[
  (p_{\xi}^{H_2})_* : \quad
  \CalO \to H_2, \quad
  \begin{pmatrix} x \\ y \\ z \end{pmatrix}
  \mapsto \begin{cases}
            B(-\log_2 (y)),      & \text{if } x = z = 0 \text{ and } y > 0, \\
            p_\ast ( (x,y,z)^T), & \text{otherwise} .
          \end{cases}
\]
Then, for $y>0$,
\[
  p_\xi^{H_2} ( (p_\xi^{H_2})_* (0,y,0))
  = p_\xi^{H_2} (B(-\log_2 (y)))
  = \begin{pmatrix} 0 \\ 2^{- (-\log_2(y))} \\ 0 \end{pmatrix}
  = \begin{pmatrix} 0 \\ y \\ 0 \end{pmatrix}
  ,
\]
and this easily implies that $(p_\xi^{H_2})_*$ is indeed a quasi-inverse for $p_\xi^{H_2}$.

Finally, note for $\phi := (p_\xi^{H_2})_* \circ p_\xi^{H_1}$ that
\[
  \phi(A(t))
  = (p_\xi^{H_2})_{\ast} \bigl( (0,2^{-t},0)^T \bigr)
  = B( - \log_2 (2^{-t}))
  = B(t)
  ,
\]
which implies that $\phi : H_1 \to H_2$ is an isomorphism,
and this easily shows that it is a quasi-isometry as a map
$\phi : (H_1, d_{W_1}) \to (H_2, d_{W_2})$.

\end{example}

\appendix

\section{Auxiliary results}
\label{sec:TechnicalResults}
This section provides various auxiliary results that are used in the main text and appendices. As we could not locate a convenient reference in the literature, we provide their short proofs.

\begin{lemma}\label{lem:NeighborhoodConnected}
  Let $X$ be a topological space, let $C \subseteq X$ be connected, and for each
  $c \in C$, let $U_c \subseteq X$ be connected with $c \in U_c$.
  Then $\bigcup_{c \in C} U_c$ is connected.

  In particular, if $X$ is a normed vector space and $C \subseteq X$ is a connected set, then
  the $\eps$-neighborhood
  \[
    B_\eps (C)
    = \big\{ x \in X : \mathrm{dist}(x, C) < \eps \big\}
    = \bigcup_{c \in C}
        B_\eps (c)
  \]
  is connected as well.
\end{lemma}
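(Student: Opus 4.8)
The plan is to reduce the statement to the elementary fact that a union of connected sets all containing a common point is connected. First I would dispose of the trivial case $C = \emptyset$, where $\bigcup_{c \in C} U_c = \emptyset$ is vacuously connected. So assume $C \neq \emptyset$ and fix a point $c_0 \in C$.

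For each $c \in C$, the sets $U_c$ and $C$ are connected and share the point $c$, so $U_c \cup C$ is connected (a union of two connected sets with nonempty intersection is connected). Every set $U_c \cup C$ contains $c_0$, so $(U_c \cup C)_{c \in C}$ is a family of connected sets with common point $c_0$; hence $Y := \bigcup_{c \in C}(U_c \cup C)$ is connected. (Indeed, if $Y = V \sqcup W$ were a separation into nonempty relatively open sets, then $c_0$ would lie in one of them, say $c_0 \in V$; since each connected set $U_c \cup C$ meets $V$ at $c_0$, it would be contained in $V$, forcing $W = \emptyset$, a contradiction.) Finally, since $c \in U_c$ for each $c \in C$, we have $C \subseteq \bigcup_{c \in C} U_c$, and therefore $Y = \bigcup_{c \in C} U_c$, which is thus connected.

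For the "in particular" assertion, I would apply the general statement with $U_c := B_\eps(c)$: in a normed vector space each open ball $B_\eps(c)$ is convex, hence connected, and contains $c$, while $B_\eps(C) = \bigcup_{c \in C} B_\eps(c)$ holds by definition of the distance to a set. The general statement then shows that $B_\eps(C)$ is connected.

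I do not expect any real obstacle here; the only points meriting care are the degenerate case $C = \emptyset$ and choosing to work with a fixed common point $c_0$ rather than with pairwise intersections, which keeps the argument short and uses only the built-in hypothesis $c \in U_c$.
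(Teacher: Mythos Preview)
Your proof is correct and takes essentially the same approach as the paper: both fix a basepoint $c_0 \in C$ and exploit that $C$ serves as connective tissue linking all the $U_c$. The only cosmetic difference is that the paper phrases the argument via the connected component $A$ of $c_0$ in $Y$ (showing $C \subseteq A$ and hence $U_c \subseteq A$ for every $c$), whereas you invoke the ``union of connected sets with a common point'' lemma directly; these are equivalent formulations of the same idea, and your explicit handling of $C = \emptyset$ is a minor tidiness the paper omits.
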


\begin{proof}
  Let $Y = \bigcup_{c \in C} U_c$, $c_0 \in C$ arbitrary, and let $A \subseteq Y$ denote the connected component of $c_0$ in $Y$. Then connectedness of $C$ implies $C \subseteq A$, and $A$ is also the connected component of any $c \in C$. Connectedness of $U_c$ then entails $U_c \subseteq A$. In summary
  \[ Y \supseteq A \supseteq \bigcup_{c \in C} U_c = Y~, \]
  which shows connectedness of $Y$. 
%
%
\end{proof}

\begin{lemma}\label{lem:ConnectedSuperSet}
  Let $U \subseteq \R^d$ be open and connected.
  Then, for each compact set $C \subseteq U$, there exists a compact, connected set
  $K \subseteq U$ with $C \subseteq K$.
\end{lemma}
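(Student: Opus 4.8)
The plan is to exploit the standard fact that a connected open subset of $\R^d$ is path-connected, and then to enlarge $C$ by finitely many closed balls and finitely many path images so that the result is compact, connected, and still contained in $U$. (The case $C = \emptyset$ is trivial --- take any singleton contained in $U$, or $K = \emptyset$ if $U = \emptyset$ --- so assume $C \neq \emptyset$.) To obtain path-connectedness, fix $x_0 \in U$ and let $V \subseteq U$ be the set of points that can be joined to $x_0$ by a continuous path inside $U$; if $x \in V$ and $B_\rho(x) \subseteq U$, then every $y \in B_\rho(x)$ lies in $V$, by concatenating a path from $x_0$ to $x$ with the segment $[x,y] \subseteq B_\rho(x)$, so $V$ is open, and the same reasoning shows that $U \setminus V$ is open. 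Since $U$ is connected and $x_0 \in V$, we get $V = U$; this is the only place where connectedness of $U$ enters.

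Since $U$ is open, for each $c \in C$ I would pick $r_c > 0$ with $\overline{B_{r_c}(c)} \subseteq U$, and by compactness of $C$ extract a finite subcover $C \subseteq \bigcup_{i=1}^n B_{r_{c_i}}(c_i)$, where $c_1, \dots, c_n \in C$. Using path-connectedness, I would choose continuous paths $\gamma_i \colon [0,1] \to U$ with $\gamma_i(0) = c_1$ and $\gamma_i(1) = c_i$, put $\Gamma_i := \gamma_i([0,1])$ and $\Gamma := \bigcup_{i=1}^n \Gamma_i$, and define
\[
  K := \Gamma \cup \bigcup_{i=1}^n \overline{B_{r_{c_i}}(c_i)}.
\]

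It then remains to verify the three required properties. The set $K$ is a finite union of compact sets --- closed balls and continuous images of $[0,1]$ --- hence compact; it is contained in $U$ by construction; and $C \subseteq \bigcup_{i=1}^n B_{r_{c_i}}(c_i) \subseteq K$. For connectedness, $\Gamma$ is connected as a finite union of the connected sets $\Gamma_i$, which all contain $c_1$; each $\Gamma \cup \overline{B_{r_{c_i}}(c_i)}$ is connected because $c_i \in \Gamma_i \cap \overline{B_{r_{c_i}}(c_i)}$; and since all these sets contain the nonempty connected set $\Gamma$, their union $K$ is connected (one could alternatively invoke \Cref{lem:NeighborhoodConnected}). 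I do not anticipate any genuine obstacle here: the only points requiring care are that all auxiliary sets stay inside $U$ (ensured by openness of $U$ together with path-connectedness) and that finitely many of them suffice (ensured by compactness of $C$), both of which are built into the construction above.
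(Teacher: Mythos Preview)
Your proof is correct and follows essentially the same approach as the paper: cover $C$ by finitely many closed balls inside $U$, connect their centers by paths in $U$ (using that an open connected subset of $\R^d$ is path-connected), and take the union. The paper's version is slightly terser---it simply invokes path-connectedness without proof and handles the connectedness of $K$ via a path-component argument---but the construction and verification are the same.
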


\begin{proof}
  For each $x \in C \subseteq U$, since $U$ is open, there exists a radius $r_x > 0$
  satisfying $\overline{B}_{r_x} (x) \subseteq U$.
  Then $\bigl(B_{r_x}(x)\bigr)_{x \in C}$ is an open cover of $C$, so that by compactness there exist
  finitely many $x_1,\dots,x_n \in C$ such that $C \subseteq \bigcup_{i=1}^n B_{r_i}(x_i)$,
  where $r_i := r_{x_i}$.

  Since $U$ is open and connected and hence path connected,
  there exists for each $i \in \{ 1,\dots,n \}$ a continuous map
  $\gamma_i : [0,1] \to U$ satisfying $\gamma_i (0) = x_1$
  and $\gamma_i (1) = x_i$.
  Define
  \[
    K
    := \bigcup_{i=1}^n \overline{B}_{r_i} (x_i)
       \cup \bigcup_{i=1}^n \gamma_i ([0,1])
    .
  \]
  Then $K \subseteq U$ is compact and satisfies $K \supseteq C$.
Moreover, by construction of $K$, the path component of $x_1$ in $K$ contains $x_1,\ldots,x_n$, and then also each $\overline{B}_{r_i}(x_i)$. Hence it contains all of $K$, which means that $K$ is connected.
\end{proof}

In the following lemma and its proof, all claims on quotient groups are with respect to the usual quotient topology. 

\begin{lemma}\label{lem:CompactQuotient}
    Let $G$ be a locally compact Hausdorff group. Suppose that $L,H \leq G$ are closed subgroups satisfying
    $L \subseteq H$ and such that $G / L$ is compact.
    Then also $H / L$ is compact.
\end{lemma}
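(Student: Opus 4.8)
The plan is to deduce compactness of $H/L$ from that of $G/L$ by exhibiting $H/L$ as the image of a closed subset of $G/L$ under a continuous map, or more directly, as a closed subspace of the compact space $G/L$. First I would consider the canonical quotient map $q : G \to G/L$ and observe that since $L \subseteq H$, the set $q(H) = H/L$ is precisely the image of $H$ under $q$; concretely, $H/L = \{ hL : h \in H \}$ sits naturally inside $G/L = \{ gL : g \in G \}$. So it suffices to show that $q(H) \subseteq G/L$ is closed: a closed subset of a compact Hausdorff space is compact, and the subspace topology that $q(H)$ inherits from $G/L$ coincides with the quotient topology on $H/L$ (this last point uses that $q|_H : H \to H/L$ is an open map, because $L$ is a subgroup of $H$ and quotient maps by subgroups are open, together with the fact that $q|_H$ factors as $H \hookrightarrow G \xrightarrow{q} G/L$).

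The key step is therefore the closedness of $q(H)$ in $G/L$. The cleanest route is to use that $q$ is an open map, hence $q(G \setminus H)$ is open provided $G \setminus H$ is saturated with respect to the equivalence relation defining $q$, i.e.\ provided $G \setminus H$ is a union of left cosets of $L$. But indeed $G \setminus H$ is a union of cosets $gL$: if $gL \cap H \neq \emptyset$, then $g\ell \in H$ for some $\ell \in L \subseteq H$, whence $g = (g\ell)\ell^{-1} \in H$; so any coset meeting $H$ is contained in $H$, and dually any coset not contained in $H$ is disjoint from $H$, i.e.\ contained in $G \setminus H$. Thus $G \setminus H = q^{-1}(q(G \setminus H))$ is saturated, $q(G \setminus H)$ is open by openness of $q$, and $q(H) = (G/L) \setminus q(G \setminus H)$ is closed. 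Here I use that $H$ is closed in $G$, so $G \setminus H$ is open, and openness of $q$ then gives that $q(G \setminus H)$ is open.

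Finally I would assemble the pieces: $q(H)$ is a closed subset of the compact Hausdorff space $G/L$, hence compact in the subspace topology; and since $q|_H : H \to H/L$ is a continuous open surjection onto $H/L$ equipped with its quotient topology, while also being (up to the canonical identification) the corestriction of $q$ to its image $q(H)$, the quotient topology on $H/L$ agrees with the subspace topology from $G/L$. Therefore $H/L$ is compact. The main obstacle — really the only subtle point — is the topological bookkeeping verifying that the quotient topology on $H/L$ coincides with the subspace topology inherited from $G/L$; once the saturation observation is in hand, closedness of $q(H)$ and hence compactness follows immediately. Hausdorffness of $G/L$ (needed to pass from "closed subset of compact" to "compact") holds because $L$ is closed in the topological group $G$.
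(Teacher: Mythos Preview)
Your proof is correct and follows essentially the same approach as the paper: both show that $q(G\setminus H)$ is open in $G/L$ (you via the saturation argument, the paper via the equivalent observation that $\pi(G\setminus H)\cap \pi(H)=\emptyset$), conclude that $H/L$ is closed in the compact space $G/L$, and note that the subspace and quotient topologies on $H/L$ agree. One minor remark: closedness in a compact space already gives compactness without invoking Hausdorffness, so your final comment about needing Hausdorffness of $G/L$ is not actually required.
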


\begin{proof}
    First, since the projection $\pi : G \to G/L$ is a continuous open map, it follows that the subspace topology on $H / L$ as a subspace of $G / L$
    agrees with the quotient topology on $H / L$, see, e.g., \cite[Chapter VI, Theorem 2.1]{Dugundji}

    For proving the claim, we first show that $\pi(G \setminus H) \cap \pi(H) = \emptyset$.
    Arguing by contradiction, suppose there exist $g \in G \setminus H$ and $h \in H$ with
    $g L = \pi(g) = \pi(h) = h L$.
    Then, since $L \subseteq H$, it follows that $g \in h L \subseteq H$, in contradiction to $g \in G \setminus H$.
 Since $\pi : G \to G / L$ is surjective, the fact that $\pi(G \setminus H) \cap \pi(H) = \emptyset$ implies that the complement of $H / L$ in $G / L$ is given by
    \[
      (H / L)^c
      = \pi(G \setminus H)
      .
    \]
    Since $G \setminus H \subseteq G$ is open and $\pi$ is an open map, this implies that $(H / L)^c$ is open in $G / L$,
    so that $H / L$ is closed in the compact set $G / L$, and hence compact as well.
\end{proof}

\begin{lemma}\label{lem:OneParameterGroupClosedness}
    Let $G$ be a first countable locally compact Hausdorff group 
    and let $\gamma : \R \to G$ be a continuous homomorphism.

    Then either the closure $\overline{\gamma(\R)}$ is compact, or $\gamma(\R) \subseteq G$
    is closed and $\gamma : \R \to \gamma(\R)$ is a homeomorphism.
\end{lemma}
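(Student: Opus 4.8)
The plan is to prove a dichotomy for the one-parameter subgroup $\gamma(\R) \subseteq G$: either its closure is compact, or it is already closed and parametrized homeomorphically by $\R$. The natural starting point is to split according to whether $\gamma$ is injective or not. If $\gamma$ is not injective, then its kernel is a nontrivial closed subgroup of $\R$, hence either all of $\R$ (so $\gamma(\R) = \{e\}$ is compact) or a discrete subgroup $c\Z$ for some $c > 0$; in that case $\gamma$ factors through the compact circle $\R / c\Z$, so $\gamma(\R)$ is the continuous image of a compact set and is therefore compact. So the only remaining case is that $\gamma$ is injective.

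For injective $\gamma$, I would try to show that if $\overline{\gamma(\R)}$ is \emph{not} compact, then $\gamma(\R)$ is closed and $\gamma : \R \to \gamma(\R)$ is a homeomorphism. The key step is the following: if $\gamma$ is injective but $\gamma : \R \to \gamma(\R)$ is \emph{not} a homeomorphism onto its image (with the subspace topology), then there is a sequence $t_n$ with $|t_n| \to \infty$ but $\gamma(t_n) \to g$ for some $g \in G$; one then argues that $\overline{\gamma(\R)}$ must be compact. Concretely, pick a compact symmetric unit neighborhood $K \subseteq G$. Using $\gamma(t_n) \to g$ and continuity of multiplication, one shows that for large $n$ the translates $\gamma(t_n)^{-1}\gamma(t_{n+1})$ lie in $K$, so $\gamma$ of an unbounded set of parameters is trapped in a bounded region; combining the group structure with the fact that $\{t_n\}$ is unbounded and $\gamma(\R)$ is connected, one covers $\gamma(\R)$ by finitely many translates of a fixed compact set, forcing $\overline{\gamma(\R)}$ compact. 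Contrapositively: if $\overline{\gamma(\R)}$ is not compact, then $\gamma : \R \to \gamma(\R)$ is a homeomorphism onto its image. Closedness of $\gamma(\R)$ then follows because a subgroup of a locally compact Hausdorff group that is locally compact in its subspace topology is closed (a standard fact, e.g. \cite[Theorem 5.11]{hewitt1963abstract}); and $\gamma(\R) \cong \R$ is certainly locally compact.

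The cleanest route may actually be to invoke a known structural result: by the solution to Hilbert's fifth problem for one-parameter subgroups, or more elementarily by \cite[Lemma 2.1]{hofmann2006structure} type arguments, a continuous one-parameter subgroup of a locally compact group is either "proper" (a homeomorphic copy of $\R$, closed) or has relatively compact image. If I want to keep the proof self-contained, I would instead argue as above: assume $\gamma$ injective, and show that the map $\R \to \gamma(\R)$ fails to be a homeomorphism precisely when a divergent sequence of parameters has convergent image, and that this in turn forces relative compactness via the translate-covering argument. First countability is used to extract the relevant sequences, and local compactness of $G$ is used both for the compact unit neighborhood and for the closedness-from-local-compactness fact.

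The main obstacle I anticipate is the translate-covering step — showing rigorously that "a divergent sequence of parameters with convergent image" implies $\overline{\gamma(\R)}$ compact. The subtlety is that one must produce a \emph{single} compact set whose translates under a cofinal sublattice of parameter values cover all of $\gamma(\R)$, and one has to be careful that the parameters $t_n$ can be taken cofinal (going to $+\infty$ or $-\infty$) and that the differences $t_{n+1} - t_n$ can be controlled. A clean way is: if $\gamma(t_n) \to g$ with $t_n \to +\infty$ (say), then along the sequence $s_n := t_{n+1} - t_n$ one has $\gamma(s_n) = \gamma(t_n)^{-1}\gamma(t_{n+1}) \to g^{-1} g = e$, so $\gamma(s_n) \to e$ with $s_n > 0$; if the $s_n$ were bounded away from $0$ and $\infty$ we could pass to a convergent subsequence $s_{n_k} \to s > 0$ giving $\gamma(s) = e$, contradicting injectivity — so either $s_n \to 0$ or $s_n \to \infty$ along a subsequence. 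Handling the case $s_n \to 0$ requires showing $\gamma([0,\delta])$ generates a relatively compact group, which again uses that $\gamma(s_n) \to e$ with $s_n \to 0$ to cover $\gamma([0,1])$ by finitely many translates of an arbitrarily small compact neighborhood; this is the most technical part and is where I would spend the most care.
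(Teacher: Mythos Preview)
Your overall strategy is correct and structurally matches the paper's: reduce to the injective case, establish the dichotomy ``not a homeomorphism onto image $\Rightarrow$ relatively compact'' (equivalently, its contrapositive), and then argue closedness once the homeomorphism property is known. The paper simply swaps which half is proved and which is cited: it \emph{cites} Hochschild (Chapter~XVI, Proposition~2.3) for the statement that non-compact closure forces $\gamma:\R\to\gamma(\R)$ to be a homeomorphism, and then gives a short direct argument for closedness (pick an open $V$ with $\gamma((-1,1))=V\cap\gamma(\R)$, take a compact symmetric $U$ with $UU\subseteq V$, and use that $h_{n_0}^{-1}h_m\in UU\cap\gamma(\R)\subseteq\gamma([-1,1])$ to trap the limit). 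Your route for closedness via ``a locally compact subgroup of a locally compact Hausdorff group is closed'' is a perfectly valid alternative, arguably slicker.

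Where your proposal has a genuine gap is the self-contained ``translate-covering'' argument. Your dichotomy on $s_n=t_{n+1}-t_n$ does not lead anywhere: the case ``$s_n\to 0$ along a subsequence'' gives no information beyond the continuity of $\gamma$ (that $\gamma(s_n)\to e$ when $s_n\to 0$ is automatic), and the case ``$s_n\to\infty$'' is left untreated. Neither branch produces the uniform bound you need to cover $\gamma(\R)$ by finitely many translates of a compact set. If you want a self-contained argument here, a cleaner line is to work inside the connected abelian locally compact group $H=\overline{\gamma(\R)}$ and use the structure theorem $H\cong\R^a\times K$ with $K$ compact: the composition $\R\to H\to\R^a$ is linear with image a line, and the existence of $t_n\to\infty$ with $\gamma(t_n)\to e$ forces the $\R^a$-component to vanish, hence $a=0$ and $H$ is compact. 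Alternatively, follow the paper and simply cite the result; your own remark that ``the cleanest route may actually be to invoke a known structural result'' is exactly what the paper does.
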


\begin{proof}
Suppose that the closure $\overline{\gamma(\R)}$ is not compact. Then, by \cite[Chapter XVI, Proposition 2.3]{HochschildStructureOfLieGroups}, 
the map $\gamma : \R \to \gamma(\R)$ is a homeomorphism, and thus it remains to show that  $\gamma(\R) \subseteq G$ is closed. For this,  let $(h_n)_{n \in \N} \subseteq \gamma(\R)$ be a sequence with $h_n \to g$ for some $g \in G$.
Since $G$ is first countable, it is enough to show that $g \in \gamma(\R)$.
For proving this, note that since $\gamma$ is a homeomorphism onto its range,
there exists an open set $V \subseteq G$ with $\gamma((-1,1)) = V \cap \gamma(\R)$.
This implies that $\gamma(0) = e_G \in V$, so that $V$ is a unit neighborhood.
Thus, there exists a compact unit neighborhood $U \subseteq G$ with $U = U^{-1}$ and $UU \subseteq V$.
By assumption, $h_n \to g$, and hence there exists $n_0 \in \N$ such that $h_n \in g U$ for all $n \geq n_0$.
Hence,
\[
  h_{n_0}^{-1} h_m \in U^{-1} g^{-1} g U = U^{-1} U \subseteq V \subseteq \gamma([-1,1])
  \qquad \text{for all} \quad \, m \geq n_0
  .
\]
Since $\gamma([-1, 1])$ is closed, letting $m \to \infty$, it follows that $h_{n_0}^{-1} g \in \gamma([-1,1]) \subseteq \gamma(\R)$. As $h_{n_0} \in \gamma(\R)$, this also yields that $g \in \gamma(\R)$, which completes the proof.
\end{proof}

\section{Postponed proofs}
\label{sec:postponedproofs}

This section consists of two proofs of results stated in \Cref{sec:ConnectivityRespectingGroups}.

\begin{proof}[Proof of \Cref{lem:inv_coarse_isom}]
Assume that 
\[
  \sup_{y \in Y}
    d_Y\bigl(f_1(f_2(y)),y\bigr)
  = M
  < \infty,
\]
and let $R_1,R_2,R_3$ denote the constants provided by assumptions $(q_1)$ and $(q_2)$ on $f_1$.
Then, for $y,y' \in Y$,
\begin{align*}
  d_X(f_2(y),f_2(y'))
  & \leq R_1  d_Y \bigl(f_1(f_2(y)),f_1(f_2(y')) \bigr) + R_1 R_2 \\
  & \leq R_1 \left( d_Y(f_1(f_2(y)),y) + d_Y(y,y') + d_Y(y',f_1(f_2(y'))) \right) + R_1 R_2 \\
  & \leq R_1 d_Y(y,y') + 2 R_1 M + R_1 R_2 .
\end{align*}
An analogous computation establishes 
\[
   d_X(f_2(y),f_2(y'))
   \ge R_1^{-1} d_Y(y,y') - 2 R_1^{-1} M - R_1^{-1} R_2,
\]
and thus $(q_1)$ is verified for $f_2$.
For the verification of $(q_2)$ we once again invoke $(q_1)$ for $f_1$ and get for all $x \in X$ that
\begin{equation}\label{eqn:quasi_inv}
  d_X(x,f_2(f_1(x)))
  \le R_1 \cdot d_Y \bigl( f_1(x),f_1(f_2(f_1(x))) \bigr) + R_1 R_2
  \le R_1 M + R_1 R_2.
\end{equation}
This implies that $(q_2)$ holds for $f_2$, and thus finishes the proof of $(i)$. Part (ii) also follows directly from 
\eqref{eqn:quasi_inv}.
\end{proof}

\begin{proof}[Proof of \Cref{lem:DifferentWsAreCoarselyEquivalent}]
  Property~\ref{enu:QuasiIsometryCondition2} is clearly satisfied, since the identity is surjective.
  We only prove one direction of the estimate in condition \ref{enu:QuasiIsometryCondition1};
  the other part follows by symmetry.

  Since $V$ is a symmetric generating set for $H_0$, we have $H_0 = \bigcup_{n=1}^\infty V^n$. Hence, by compactness of $\overline{W} \subseteq H_0$, 
  there exists some $N \in \N$ such that $\overline{W} \subseteq V^N$.
  For proving the claim, let $x,y \in H$ be arbitrary and set $m := d_W (x,y) \in \N_0 \cup \{ \infty \}$.
  Note that trivially $d_V(x,y)  \leq N \cdot d_W (x, y)$ whenever $m = 0$ or $m = \infty$, and thus it remains to consider the case  $m \in \N$. In this case,  $x^{-1} y \in W^m \subseteq (V^N)^m \subseteq V^{m N}$ and hence
  \[
    d_V (x,y) \leq m N = N  d_W (x,y)
    ,
  \]
  which completes the proof.
\end{proof}

\section{Connected Lie groups quasi-isometric to \texorpdfstring{$\mathbb{R}$}{ℝ}}

This section is devoted to characterizing connected Lie groups that are quasi-isometric to the real line $\mathbb{R}$.
We expect these results to be folklore, but were unable to locate a convenient reference for them.
In the interest of a self-contained presentation we provide proofs, relying on various sources, most notably \cite{MR1825980,MR4079363}. To make the proof accessible also to readers with limited background in Lie theory,
we provide more detail than is perhaps necessary for specialists.

We start by introducing the notion of growth on locally compact groups.
Throughout this section, we will concentrate on connected Lie groups.

\begin{definition}
    Let $G$ be a connected Lie group.
    For a relatively compact, symmetric open unit neighborhood $U \subseteq G$,
    let $d_U$ be the associated word metric,
    and write
    \[
      U_r := \{ g \in G : d(g,e_G) \leq r \} = U^{\lfloor r \rfloor}
      \quad \text{for} \quad
      r > 0.
    \]
    The \textit{growth function} associated to $U$ and $G$ is defined by 
    \[
        \nu_{U,G} : \quad
        (0,\infty) \to [0,\infty), \quad
        \nu_{U,G}(r) := \mu_G(U_r).
    \]
\end{definition}

Given two nondecreasing functions $\nu, \tilde{\nu}: (0,\infty) \to [0,\infty)$,
following \cite[Definition 3.D.3]{cornulier2016metric},
we write $\nu \preceq \tilde{\nu}$ if there exist constants
$a,b > 0, c \geq 0$ such that for all $r \in (0,\infty)$
\[
  \nu(r) \leq a \tilde{\nu}( b r + c) + c~.
\] 
We write $\nu \simeq \tilde{\nu}$ if both $\nu \preceq \tilde{\nu}$ and $\tilde{\nu} \preceq \nu$.
Clearly, this defines an equivalence relation on the set of nondecreasing functions
from $(0,\infty)$ to $[0,\infty)$.
It is easily checked that $\nu_{U,G} \sim \nu_{V,G}$
for any two relatively compact, open neighborhoods of unity contained in connected Lie groups.
Since we are only interested in equivalence classes,
we will therefore write $\nu_G = \nu_{U,G}$,  for a suitably chosen neighborhood $U$.

A group is called \textit{of polynomial growth} if $\nu_G(r) \preceq r^n$ for a suitable exponent $n \in [0,\infty)$.
We call $G$ \textit{of linear growth} if $\nu_G(r) \simeq r$ holds.
The relevance of growth for our arguments comes from the fact that it is a quasi-isometric invariant: 

\begin{lemma} \label{prop:growth_invariant}
    Let $G, H$ denote two connected Lie groups that are quasi-isometric.
    Then $\nu_G \simeq \nu_H$.
\end{lemma}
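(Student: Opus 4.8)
The plan is to show that the growth function is, up to the equivalence $\simeq$, invariant under quasi-isometries of connected Lie groups by working directly with a quasi-isometry $f : G \to H$ and its quasi-inverse. By \Cref{lem:inv_coarse_isom}, a quasi-isometry $f : G \to H$ has a quasi-inverse $g : H \to G$ which is itself a quasi-isometry, and so it suffices to prove one direction of the equivalence, say $\nu_G \preceq \nu_H$; the reverse inequality follows by applying the same argument to $g$. Fix relatively compact, symmetric open unit neighborhoods $U \subseteq G$ and $V \subseteq H$ determining the word metrics $d_U$ and $d_V$, and let $R_1, R_2, R_3 > 0$ be the quasi-isometry constants for $f$ from conditions \ref{enu:QuasiIsometryCondition1} and \ref{enu:QuasiIsometryCondition2}.

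The first key step is to compare the ball $U_r = \{x \in G : d_U(x, e_G) \le r\}$ with a ball in $H$ via $f$. From the left estimate in \ref{enu:QuasiIsometryCondition1}, if $x \in U_r$ then $d_V(f(x), f(e_G)) \le R_1 r + R_2$, so $f(U_r)$ is contained in a $d_V$-ball of radius $\lesssim r$ around the fixed point $f(e_G)$; by left-invariance of $d_V$ this ball is a left translate of $V_{R_1 r + R_2}$, hence has Haar measure $\nu_H(R_1 r + R_2)$. The second key step is to control the ``multiplicity'' of $f$ on $U_r$, i.e., to bound how many points of $U_r$ can be mapped by $f$ close together: using the \emph{right} estimate in \ref{enu:QuasiIsometryCondition1} in the form $d_U(x, x') \ge R_1^{-1} d_V(f(x), f(x')) - R_1^{-1} R_2$, one sees that if $f(x)$ and $f(x')$ lie in a common $d_V$-ball of bounded radius, then $d_U(x, x')$ is bounded by a constant $N$ depending only on $R_1, R_2$; hence any $d_U$-ball of radius $N$ contains a $\delta$-net of preimages, and by comparing volumes of such balls in $G$ (using that $\nu_{U,G}(N)$ is a finite constant) one gets that the measure of $U_r$ is controlled by a constant times the measure of a slightly enlarged $d_V$-ball containing $f(U_r)$. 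Combining the two steps yields $\nu_G(r) = \mu_G(U_r) \le a \, \mu_H(V_{b r + c}) + c = a \, \nu_H(br + c) + c$ for suitable constants, which is exactly $\nu_G \preceq \nu_H$.

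The technical point requiring care — and the main obstacle — is making precise the ``volume comparison via bounded multiplicity'' argument: one must produce, inside $U_r$, a maximal subset $\{x_i\}_i$ that is $\delta$-separated in $d_U$ for a suitably chosen $\delta$ (so that the translates $x_i U$ are essentially disjoint and each has fixed positive Haar measure, giving $\#\{x_i\} \cdot \mu_G(U) \lesssim \mu_G(U_{r+1})$), then observe that $f$ maps this separated set to a set whose images are $d_V$-separated up to additive constants, hence whose $V$-translates cover $f(U_r)$ with bounded overlap, giving $\mu_H$ of an enlarged ball $\gtrsim \#\{x_i\} \cdot \mu_H(V)$. Chaining these two inequalities produces the desired bound $\mu_G(U_r) \lesssim \mu_G(U_{r+1}) \lesssim \#\{x_i\} \lesssim \mu_H(V_{br+c})$, where the middle comparison $\mu_G(U_{r+1}) \lesssim \#\{x_i\}$ uses that the $x_i U$ cover $U_r$ (by maximality of the separated set) so $\#\{x_i\} \cdot \mu_G(U^2) \ge \mu_G(U_{r})$ after enlarging $U$ once. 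All of this is standard coarse-geometry bookkeeping once the quasi-isometry constants are fixed, but the careful handling of the additive constants $R_2, R_3$ (which force us to work with $br + c$ rather than $br$) and the choice of separation scale $\delta$ relative to $R_1, R_2$ is where a proof can easily become sloppy; I would present it cleanly by isolating the separated-set construction as the organizing device. Finally, invoking symmetry via the quasi-inverse $g$ gives $\nu_H \preceq \nu_G$ and hence $\nu_G \simeq \nu_H$.
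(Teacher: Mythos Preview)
Your argument is correct and is the standard direct packing/covering proof that volume growth is a quasi-isometry invariant. The paper takes a different, much shorter route: it simply cites \cite[Propositions~3.D.23 and~3.D.29]{cornulier2016metric}, which identify the Haar-volume growth $\nu_G$ (up to $\simeq$) with an abstract growth function $\beta_G$ defined purely metrically via cardinalities of maximal separated nets in balls, and then note that $\beta_G$ is manifestly preserved under quasi-isometry. Your separated-net construction is exactly what lies behind those cited propositions, so the two proofs agree in substance; you have unpacked the black box, trading brevity for self-containment.

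Two small clean-ups in your write-up. First, the chain $\mu_G(U_r)\lesssim\mu_G(U_{r+1})\lesssim\#\{x_i\}$ is garbled: all you need is $\mu_G(U_r)\le\#\{x_i\}\cdot\mu_G(U_\delta)$, which follows directly from maximality of the $\delta$-separated set (the $x_iU_\delta$ cover $U_r$); there is no reason to pass through $U_{r+1}$. Second, the phrase ``whose $V$-translates cover $f(U_r)$ with bounded overlap'' is not the relevant mechanism---covering of $f(U_r)$ plays no role. What you actually use is that once $\delta>R_1(R_2+2)$ the images $f(x_i)$ are $d_V$-separated by more than $2$, so the sets $f(x_i)V$ are pairwise \emph{disjoint} and contained in a single left translate of $V_{R_1 r+R_2+1}$, giving $\#\{x_i\}\cdot\mu_H(V)\le\nu_H(R_1 r+R_2+1)$. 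With those two fixes the argument is clean.
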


\begin{proof}
    Fix a symmetric, relatively compact neighborhood $U \subseteq G$ of unity,
    and denote by $d_U$ the associated word metric.
    By \cite[Propositions 3.D.23 and 3.D.29]{cornulier2016metric},
    there exists a growth function $\beta_G$ (cf. \cite[Definition 3.D.2]{cornulier2016metric}) that is (up to equivalence)
    defined purely in terms of $d_G$ and satisfies $\beta_G \sim \nu_G$.
    The same reasoning applies to $H$.
    Hence, since $G$ and $H$ are quasi-isometric, an application of \cite[Proposition 3.D.23]{cornulier2016metric} yields
    that $\beta_G \simeq \beta_H$, which then implies $\nu_G \simeq \nu_H$.
\end{proof}

We first give a characterization of nilpotent Lie groups with linear growth. 

\begin{lemma} \label{prop:nilp_lg}
    Let $G$ be a connected, simply connected nilpotent Lie group with linear growth.
    Then $G \cong \mathbb{R}$.
\end{lemma}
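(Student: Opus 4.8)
The plan is to combine the classical volume-growth formula for simply connected nilpotent Lie groups (due to Bass and Guivarc'h) with the elementary fact that a polynomial $r \mapsto r^{D}$ is $\simeq$-equivalent to $r$ only when $D = 1$. First I would dispose of the degenerate case: if $G$ were trivial, then $\nu_G$ would be a nonzero constant, and a constant function is not $\simeq$-equivalent to $r$ (the latter being unbounded); hence $G$ is nontrivial, its Lie algebra $\mathfrak{g}$ is a nonzero nilpotent Lie algebra, and $n := \dim G \geq 1$.

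Next I would invoke the Bass--Guivarc'h growth formula (see, e.g., \cite{cornulier2016metric} and the references therein): for a connected, simply connected nilpotent Lie group $G$ one has $\nu_G(r) \simeq r^{D}$, where
\[
  D := \sum_{k \geq 1} k \cdot \dim\bigl( \mathfrak{g}^{(k)} / \mathfrak{g}^{(k+1)} \bigr),
\]
with $\mathfrak{g}^{(1)} := \mathfrak{g}$ and $\mathfrak{g}^{(k+1)} := [\mathfrak{g}, \mathfrak{g}^{(k)}]$ the lower central series of $\mathfrak{g}$. Since $\mathfrak{g}$ is nilpotent, this series reaches $\{0\}$ after finitely many steps, so that $\dim \mathfrak{g} = \sum_{k \geq 1} \dim(\mathfrak{g}^{(k)}/\mathfrak{g}^{(k+1)})$. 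Comparing the two sums term by term yields $D \geq \dim \mathfrak{g} \geq 1$.

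Finally I would bring in the hypothesis of linear growth. Together with $\nu_G \simeq r^{D}$ it gives $r \simeq r^{D}$, and since $r^{D} \preceq r$ fails for $D \geq 2$ (let $r \to \infty$) while $r \preceq r^{D}$ fails for $D = 0$, we must have $D = 1$. Combined with $1 = D \geq \dim \mathfrak{g} \geq 1$, this forces $\dim \mathfrak{g} = 1$, i.e.\ $\dim G = 1$. A connected, simply connected Lie group whose Lie algebra is one-dimensional is isomorphic to $\mathbb{R}$ — the only connected one-dimensional Lie groups are $\mathbb{R}$ and the circle group, and simple connectedness excludes the latter — so $G \cong \mathbb{R}$, as claimed.

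The only point requiring care is the correct invocation of the Bass--Guivarc'h volume-growth formula in the form matching the growth function $\nu_G$ used here (the Haar measure of word-metric balls), together with the bookkeeping that rules out the trivial group, whose growth is bounded rather than linear. Everything else is a routine comparison of polynomial growth rates.
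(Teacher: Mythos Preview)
Your proposal is correct and follows essentially the same route as the paper: invoke the strict polynomial growth $\nu_G(r) \simeq r^{d(G)}$ for simply connected nilpotent Lie groups with $d(G) \geq \dim G$, compare with linear growth to force $d(G) = 1$, and conclude $\dim G = 1$, hence $G \cong \mathbb{R}$. You spell out more details (the explicit Bass--Guivarc'h formula, the exclusion of the trivial group, and why $\dim G = 1$ forces $G \cong \mathbb{R}$), but the argument is the same.
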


\begin{proof}
Since $G$ is simply connected and nilpotent, it follows from \cite[Corollary 2.9]{MR3267520} that it has strict polynomial growth,
in the sense that $\nu_G(r) \simeq r^{d(G)}$ for some $d(G) \ge \dim(G)$.
On the other hand, the assumption of linear growth implies $d(G) = 1$.
Since $G$ is nontrivial, we get $\dim(G)=1$, and the desired conclusion follows. 
\end{proof}

The following result is the main result of this section.

\begin{proposition} \label{prop:char_lin_grwth}
    Let $G$ denote a connected Lie group.
    Then the following assertions are equivalent:
    \begin{enumerate}
    \item[(i)] $G$ is quasi-isometric to $\mathbb{R}$.
    \item[(ii)] $G$ has linear growth.
    \item[(iii)] $G$ contains a  closed cocompact, noncompact one-parameter subgroup. 
    \end{enumerate}
\end{proposition}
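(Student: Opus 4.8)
The plan is to prove the cycle of implications (i)$\Rightarrow$(ii)$\Rightarrow$(iii)$\Rightarrow$(i). The implication (i)$\Rightarrow$(ii) is immediate from \Cref{prop:growth_invariant}: since $\mathbb{R}$ has linear growth $\nu_{\mathbb{R}}(r) \simeq r$, any connected Lie group quasi-isometric to $\mathbb{R}$ has $\nu_G \simeq \nu_{\mathbb{R}} \simeq r$. For (iii)$\Rightarrow$(i), suppose $L = \gamma(\mathbb{R}) \leq G$ is a closed, noncompact, cocompact one-parameter subgroup. By \Cref{lem:OneParameterGroupClosedness}, noncompactness of $\overline{L} = L$ forces $\gamma : \mathbb{R} \to L$ to be a homeomorphism, hence a proper continuous isomorphism, so $L$ is quasi-isometric to $\mathbb{R}$ (with respect to the word metric from a compact generating neighborhood). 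Since $L$ is cocompact in $G$, the inclusion $L \hookrightarrow G$ is a quasi-isometry (this is the standard \v{S}varc–Milnor-type fact for cocompact closed subgroups of locally compact compactly generated groups; one can cite \cite{cornulier2016metric} or argue directly using a compact set $C$ with $G = CL$ as in the proof of \Cref{cor:cocompact}). Composing, $G$ is quasi-isometric to $\mathbb{R}$.

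The substantive implication is (ii)$\Rightarrow$(iii). Here I would argue as follows. First, linear growth is a form of polynomial growth, so by Gromov's theorem (for Lie groups, via \cite{MR1825980} or the structure theory in \cite{cornulier2016metric}), $G$ is of polynomial growth, hence $G$ has a cocompact closed normal subgroup which, after passing to a finite-index-type reduction, is a connected, simply connected nilpotent Lie group $N$; more precisely, a connected Lie group of polynomial growth has a compact normal subgroup $K$ such that $G/K$ is a connected solvable (in fact nilpotent-by-compact) Lie group, and one extracts a cocompact closed connected simply connected nilpotent subgroup. Growth is inherited: $\nu_N \simeq \nu_G \simeq r$ since $N$ is cocompact in (a quotient of) $G$ and passing to compact kernels and cocompact subgroups preserves the growth class by \Cref{prop:growth_invariant} (combined with the cocompact quasi-isometry fact above). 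Now \Cref{prop:nilp_lg} applies: a connected, simply connected nilpotent Lie group of linear growth is isomorphic to $\mathbb{R}$. Thus $G$ contains a closed cocompact copy of $\mathbb{R}$, which is necessarily noncompact, giving (iii).

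The main obstacle is the structure-theoretic step inside (ii)$\Rightarrow$(iii): extracting from a polynomial-growth connected Lie group $G$ a \emph{closed, cocompact, connected, simply connected nilpotent} subgroup (or subquotient) so that \Cref{prop:nilp_lg} can be invoked, and then lifting the resulting $\mathbb{R}$ back to a closed cocompact one-parameter subgroup of $G$ itself. This requires the Jenkins/Guivarc'h structure theory of groups of polynomial growth — such a group is compact-by-(nilpotent-by-compact), and one must carefully track that each reduction step (dividing by a compact normal subgroup, passing to a cocompact closed connected subgroup) both preserves linear growth and can be reversed at the level of one-parameter subgroups. I would organize this as a short lemma citing \cite{MR1825980,MR4079363,cornulier2016metric} for the structural input and then assemble the pieces, keeping the Lie-theoretic bookkeeping explicit since the target readership may not have it at their fingertips. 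The reversibility — that a cocompact $\mathbb{R}$ in $G/K$ with $K$ compact lifts to a cocompact one-parameter subgroup of $G$ — is where I would be most careful, using that $K$ is compact and the exponential map together with \Cref{lem:OneParameterGroupClosedness} and \Cref{lem:CompactQuotient}.
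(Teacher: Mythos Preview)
Your overall architecture---the cycle (i)$\Rightarrow$(ii)$\Rightarrow$(iii)$\Rightarrow$(i), with (i)$\Rightarrow$(ii) via \Cref{prop:growth_invariant}, (iii)$\Rightarrow$(i) via the cocompact-inclusion quasi-isometry, and (ii)$\Rightarrow$(iii) as the substantive step reducing to \Cref{prop:nilp_lg} through the structure theory of polynomial-growth Lie groups---matches the paper exactly, including the references you plan to cite.

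The one point where your sketch diverges from the paper is the \emph{direction} of the structural reduction in (ii)$\Rightarrow$(iii). You describe extracting a cocompact simply connected nilpotent subgroup (or subquotient) \emph{inside} $G/K$. The paper instead uses \cite{MR4079363} to obtain a cocompact closed \emph{embedding} $\varphi: H = G/K_1 \hookrightarrow N \rtimes K_2$ with $N$ simply connected nilpotent and $K_2$ compact---so $N$ sits in an ambient group \emph{containing} $H$, not inside $H$. From there the paper argues: $N$ inherits linear growth, hence $N \cong \mathbb{R}$ by \Cref{prop:nilp_lg}; since $\operatorname{Aut}(\mathbb{R}) \cong \mathbb{R}^*$ has no nontrivial connected compact subgroup, the action of (the connected component of) $K_2$ on $N$ is trivial, so $N \rtimes K_2 = N \times K_2$; and only then can one produce a one-parameter subgroup of $H$ by choosing $X$ in the Lie algebra of $H$ with $\pi_N \circ \varphi \circ \exp_H(\mathbb{R}X)$ nontrivial, hence all of $N$. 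The lift from $H$ back to $G$ is done via the differential $dq: \mathfrak{g} \to \mathfrak{h}$ of the quotient map, picking a preimage $Y$ of $X$, and then invoking \Cref{lem:OneParameterGroupClosedness} and a compact-transversal argument for cocompactness---exactly the kind of bookkeeping you anticipated. Your version would be cleaner if a cocompact simply connected nilpotent \emph{subgroup} of $G/K$ were directly available, but the cited structure theorem does not give that, so be prepared to work with the embedding instead.
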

\begin{proof}
    The implication $(i) \Rightarrow (ii)$ is due to Proposition \ref{prop:growth_invariant}. 

    The proof of $(ii) \Rightarrow (iii)$ relies on results from \cite{MR1825980,MR4079363}.
    Since $G$ is of polynomial growth, an application of \cite[Proposition 1]{MR1825980}
    yields the existence of a maximal compact normal subgroup $K_1$ of $G$,
    and shows that $G/K_1$ is a Lie group.
    By \cite[Proposition 4.C.12]{cornulier2016metric}, the quotient map $G \to G/K_1$ must be a quasi-isometry
    (if both $G$ and $G / K_1$ are equipped with the word metric associated to a compact generating set).
    Therefore, by \Cref{prop:growth_invariant}, $H := G/K_1$ is a connected Lie group
    of linear (in particular polynomial) growth without nontrivial compact normal subgroups. 

    By \cite[Theorem 2]{MR4079363}, there exists a topological embedding $\varphi: H \to N \rtimes K_2$,
    where $N$ is a simply connected, connected nilpotent Lie group and $K_2$ is a compact group,
    and such that $\varphi(H) \subseteq N \rtimes K_2$ is  closed and cocompact.
    By definition of a (topological) semidirect product, the map
    \[
      \psi :  N \rtimes K_2 \to K_2, \quad g = n k \mapsto k
      \qquad \text{where } n \in N, k \in K_2
    \]
    is a continuous homomorphism.
    Given that $H$ is connected, the image $\psi(\varphi(H)) \subseteq K_2$
    is contained in the connected component $K_2^\ast$ of the unit element of $K_2$.
    Thus, by replacing $K_2$ with $K_2^\ast$, we can assume that $K_2$ is connected.
    Note that for this, we use that $(N \rtimes K_2^\ast) / \varphi(H)$
    is compact by Lemma \Cref{lem:CompactQuotient}, because $\varphi(H) \subseteq N \rtimes K_2^\ast \subseteq N \rtimes K_2$
    are closed subgroups in $N \rtimes K_2$ and $(N \rtimes K_2) / \varphi(H)$
    is compact.
    
    Since $(N \rtimes K_2)/ \varphi(H)$ is compact, the inclusion map $\varphi(H) \hookrightarrow N \rtimes K_2$ is a quasi-isometry
    by \cite[Proposition 4.C.11]{cornulier2016metric} (again, with respect to suitable word metrics).
    The same reasoning applies to the inclusion map $N \hookrightarrow N \rtimes K_2$.
    Combining these observations yields that $G$ and $N$ are quasi-isometric.
    As such, it follows therefore from  \Cref{prop:growth_invariant} that $N$ has linear growth,
    and thus $N \cong \R$ by \Cref{prop:nilp_lg}.
    Hence, the automorphism group of $N$ is isomorphic to $\mathbb{R}^*$,
    which contains no nontrivial connected compact subgroup.
    Since the compact group $K_2$ acts continuously on $N$, this action must then be trivial.
    Hence, $N \rtimes K_2 = N \times K_2$.

   For constructing the one-parameter subgroup, let $\pi_N: N \times K_2 \to N$ denote the canonical projection.
    Since $\varphi(H)$ is cocompact in $N \times K_2$ and $N \cong \R$ is not compact,
    $\varphi(H)$ is not contained in $\{ e_N \} \times K_2$. 
     Since the image of the exponential map $\exp_H$ generates $H$,
    this entails the existence of an element $X$ of the Lie algebra of $H$ such that
    $(\pi_N \circ \varphi)(\exp_H(\mathbb{R}X))$ is nontrivial and connected.
    Since $N \cong \R$ does not have any nontrivial connected subgroups,
    it follows that $\pi_N(\varphi(\exp_H (\mathbb{R} X))) = N$.
    This entails that  $\varphi(\exp_H(\mathbb{R} X)) \subseteq \varphi(H) \subseteq N \times K_2$
    is cocompact in $N \times K_2$. 
    Next, note that $\varphi(\exp_H(\R X)) \subseteq N \times K_2$ is closed by \Cref{lem:OneParameterGroupClosedness},
    since its closure cannot be compact, because $\varphi(\exp_H(\R X))$ is cocompact in the
    noncompact group $N \times K_2$. By \Cref{lem:CompactQuotient}, it follows therefore that $\varphi(\exp_H(\mathbb{R} X))$ is also cocompact in $\varphi(H)$ 
    Since $\varphi$ is a topological embedding,
    it follows that $\exp_H(\mathbb{R} X)$ must be cocompact and closed in $H$. 
    
    It remains therefore to lift the one-parameter subgroup to $G$.
    To do this, let $q: G \to H$ denote the quotient map,
    and $dq: \mathfrak{g} \to \mathfrak{h}$ the differential map between the respective Lie algebras.
     Since $q$ is an open Lie group homomorphism,
    $dq$ is a surjective Lie algebra homomorphism, see, e.g., \cite[Proposition 9.2.13]{MR3025417}).
    In particular, there exists $Y \in \mathfrak{g}$ with $dq(Y) = X$,
    and $q(\exp_G( t Y)) = \exp_H(tX)$ holds for all $t \in \mathbb{R}$,  see, e.g., \cite[Proposition 9.2.10]{MR3025417}.
    In particular, since $\exp_H(\mathbb{R} X)$ is noncompact and closed, $\exp_G(\mathbb{R} Y)$ does not have compact closure.
    But then it is closed (by \Cref{lem:OneParameterGroupClosedness}) and noncompact, as well. For showing that $\exp_G(\mathbb{R}Y) \subseteq G$ is cocompact, note that since $\exp_H(\mathbb{R}X) \subseteq H$ is cocompact,
    there exists a compact set $\Theta \subseteq H$ with $H = \Theta \exp_H (\R X)$, cf. \cite[Lemma 2.46]{FollandAHA}.
    Similarly, there exists a compact set $\Omega \subseteq G$ with $\Theta = q(\Omega)$.
    Given $g \in G$, we can write $q(g) = \theta \cdot \exp_H (t X)$ for certain $\theta \in \Theta$ and $t \in \R$.
    Writing $\theta = q(\omega)$ with $\omega \in \Omega$, we then see
    $q(g) = q(\omega \exp_G (t Y))$, and thus $g \in \omega \exp_G (t Y) K_1 = \omega K_1 \exp_G (t Y)$,
    which shows that $G = \Omega K_1 \exp_G(\R Y)$, so that $G / \exp_G(\R Y)$ is compact. Thus, $\exp_G(\mathbb{R}Y) \subseteq G$ is cocompact, which shows (iii).

    The implication $(iii) \Rightarrow (i)$ follows again by \cite[Proposition 4.C.11]{cornulier2016metric}.
\end{proof}

\section*{Acknowledgements}
For J.~v.~V., this research was funded in whole or in part by the Austrian
Science Fund (FWF): 10.55776/J4555 and 10.55776/PAT2545623. For open access purposes, the author has applied a CC BY public copyright license to any author-accepted manuscript version arising from this submission. In addition, 
J.~v.~V.\ is grateful for the hospitality and support
of the Katholische Universität Eichstätt-Ingolstadt during his visit. 
F.V.\ acknowledges support by the Hightech Agenda Bavaria.

\bibliographystyle{abbrv}
\bibliography{bib}

\end{document}